\newtheorem{theorem}{Theorem}[section]
\newtheorem{corollary}[theorem]{Corollary}
\newtheorem{lemma}[theorem]{Lemma}
\theoremstyle{definition}
\newtheorem{definition}[theorem]{Definition}
\theoremstyle{remark}
\newtheorem{remark}[theorem]{Remark}
\newcommand{\DeclareMathOperatorWithArity}[2]{%
  \makeatletter%
  \expandafter\DeclareMathOperator\csname #1@@\endcsname{#2}%
  \expandafter\newcommand\csname #1\endcsname[1][]{\ifthenelse{\equal{##1}{}}{\csname #1@@ \endcsname}{\csname #1@@ \endcsname^{(##1)}}}%
  \makeatother
}
\newcommand{\DeclareMathOperatorWithArityAndArg}[3]{%
  \makeatletter%
  \expandafter\DeclareMathOperator\csname #1@@\endcsname{#2}%
  \expandafter\newcommand\csname #1\endcsname[1][]{\ifthenelse{\equal{##1}{}}{\csname #1@@ \endcsname(#3)}{\csname #1@@ \endcsname^{(##1)}(#3)}}%
  \makeatother
}
\newcommand{\2}{{\mathbf{2}}}
\DeclareMathOperator{\clone}{Clone}
\newcommand{\DeclareNewVector}[2]{\expandafter\newcommand\csname #1\endcsname{{\mathbf{#2}}}}
\newcommand{\IN}{{\mathbb{N}}}
\DeclareMathOperator{\dom}{dom}
\DeclareMathOperator{\pr}{pr}
\DeclareMathOperator{\StrN}{Str}
\newcommand{\Str}[1]{\StrN(#1)}
\newcommand{\intervalD}[1]{\ensuremath{\mathcal I(#1)}}
\newcommand{\intervalStr}[1]{\ensuremath{\mathcal I_{\StrN}(#1)}}
\newcommand{\intervalStrUp}[1]{\ensuremath{\mathcal I_{\StrN}^{\subseteq}(#1)}}
\DeclareMathOperator{\Arity}{Arity}
\newcommand{\atPos}[2]{\underset{\substack{\uparrow \\ \makebox[0pt]{\ensuremath{#1}}\vphantom{\ensuremath{#1}}}}{#2}}
\newcommand{\Ptwoone}{$\Omega_1$}
\newcommand{\PostsLattice}[1]{
  \begin{tikzpicture}[scale=#1, transform shape]
    \tikzstyle{every node} = [circle, fill=black,scale=0.5]
    \tikzstyle{every label} = [scale=2,draw=none, fill=none, label distance=-4]
    \node (P2) [label=above:$\OpBool$] at (7.0   ,10.0) {};
    \node (T0) [label=above left:$T_0$]  at (7.0-1.5,10.0-.5) {};
    \node (T1) [label=above right:$T_1$] at (7.0+1.0,10.0-.5) {};
    \node (T)  at (7.0-.5,10.0-1.0) {};
    \node (M) [label=above left:$M$] at (6.76   ,9.38) {};
    \node (T0M) at (7.23-1.5, 9.51-.5) {};
    \node (T1M) at (6.76+1.0, 9.38-.5) {};
    \node (TM)  at (7.23-.5, 9.51-1.0) {};
    \node (L) [label=above right:$L$] at (7.0   , 5.2) {};
    \node (T0L) at (7.0-1.5, 5.2-.5) {};
    \node (T1L) at (7.0+1.0, 5.2-.5) {};
    \node (TL)  at (7.0-.5, 5.2-1.0) {};
    \node (AV)  at (7.0   , 3.0) {};
    \node (T0AV) at (7.0-1.5, 3.0-.5) {};
    \node (T1AV) at (7.0+1.0, 3.0-.5) {};
    \node (TAV)  at (7.0-.5, 3.0-1.0) {}; 
    
    \node (P21) [label=right:\Ptwoone] at (7.0,3.8) {}; 
    \node (S) [label=right:$S$] at (6.85,7.0) {};
    \node (SL) at (6.75,4.7) {};
    \node (SP21) at (6.65,2.5) {};
    \node (ST) at (6.5,6.5) {};
    \node (SM) at (6.15,6.0) {};

    \foreach \from/\to in {
          P2/T0, P2/T1, T0/T, T1/T,
          M/T0M, M/T1M, T0M/TM, T1M/TM,
          P2/M, T0/T0M, T1/T1M, T/TM,
          L/T0L, L/T1L, T0L/TL, T1L/TL,
          P2/L, T0/T0L, T1/T1L, T/ST, ST/TL,
          AV/T0AV, AV/T1AV, T0AV/TAV, T1AV/TAV,
          L/P21, P21/AV, T0L/T0AV, T1L/T1AV, TL/TAV,
          P2/S, S/ST, S/SL, L/SL, SL/TL, SL/SP21, P21/SP21, SP21/TAV,
          ST/SM, SM/TAV}
    \draw [-] (\from) -- (\to);

    \node (T02) [label=left:$T_{0,2}$] at (.5, 8.5) {};
    \node (T02T) at (.5+1.0, 8.5-.7) {};
    \node (T02M) at (.5+2.0, 8.5-.3) {};
    \node (T02TM) at (.5+3.0, 8.5-1.0) {};
    \node (T03) [label=left:$T_{0,3}$] at (.5, 7.5) {};
    \node (T03T) at (.5+1.0, 7.5-.7) {};
    \node (T03M) at (.5+2.0, 7.5-.3) {};
    \node (T03TM) at (.5+3.0, 7.5-1.0) {};
    \node (T0H) [draw=none, fill=none, scale=0.1] at (.5, 7.0) {};
    \node (T0HT) [draw=none, fill=none, scale=0.1] at (.5+1.0, 7.0-.7) {};
    \node (T0HM) [draw=none, fill=none, scale=0.1] at (.5+2.0, 7.0-.3) {};
    \node (T0HTM) [draw=none, fill=none, scale=0.1] at (.5+3.0, 7.0-1.0) {};
    \node (T0e) [label=left:$T_{0,\infty}$] at (.5, 5.5) {};
    \node (T0eT) at (.5+1.0, 5.5-.7) {};
    \node (T0eM) at (.5+2.0, 5.5-.3) {};
    \node (T0eTM) at (.5+3.0, 5.5-1.0) {};
    
    \node (A) [label=below:$\Lambda$] at (.5+3.5,3.7) {};
    \node (AT1) at (.5+4.5,3.2) {};
    \node (AT0) at (.5+2.0,3.2) {};
    \node (AT) at (.5+3.0,2.7) {}; 

    \node (T12) [label=right:$T_{1,2}$] at (13.0, 8.5) {};
    \node (T12T) at (13.0-1.0, 8.5-.7) {};
    \node (T12M) at (13.0-2.0, 8.5-.3) {};
    \node (T12TM) at (13.0-3.0, 8.5-1.0) {};
    \node (T13) [label=right:$T_{1,3}$] at (13.0, 7.5) {};
    \node (T13T) at (13.0-1.0, 7.5-.7) {};
    \node (T13M) at (13.0-2.0, 7.5-.3) {};
    \node (T13TM) at (13.0-3.0, 7.5-1.0) {};
    \node (T1H) [draw=none, fill=none, scale=0.1] at (13.0, 7.0) {};
    \node (T1HT) [draw=none, fill=none, scale=0.1] at (13.0-1.0, 7.0-.7) {};
    \node (T1HM) [draw=none, fill=none, scale=0.1] at (13.0-2.0, 7.0-.3) {};
    \node (T1HTM) [draw=none, fill=none, scale=0.1] at (13.0-3.0, 7.0-1.0) {};
    \node (T1e) [label=right:$T_{1,\infty}$] at (13.0, 5.5) {};
    \node (T1eT) at (13.0-1.0, 5.5-.7) {};
    \node (T1eM) at (13.0-2.0, 5.5-.3) {};
    \node (T1eTM) at (13.0-3.0, 5.5-1.0) {};

    \node (V) [label=below:$V$] at (13.0-3.5,3.7) {};
    \node (VT0) at (13.0-4.5,3.2) {};
    \node (VT1) at (13.0-2.0,3.2) {};
    \node (VT) at (13.0-3.0,2.7) {}; 

    \foreach \from/\to in {
          T02/T02T, T02/T02M, T02T/T02TM, T02M/T02TM,
          T03/T03T, T03/T03M, T03T/T03TM, T03M/T03TM,
          T0e/T0eT, T0e/T0eM, T0eT/T0eTM, T0eM/T0eTM,
          T12/T12T, T12/T12M, T12T/T12TM, T12M/T12TM,
          T13/T13T, T13/T13M, T13T/T13TM, T13M/T13TM,
          T1e/T1eT, T1e/T1eM, T1eT/T1eTM, T1eM/T1eTM,
          T02/T03, T02T/T03T, T02M/T03M, T02TM/T03TM,
          T03/T0H, T03T/T0HT, T03M/T0HM, T03TM/T0HTM,
          T12/T13, T12T/T13T, T12M/T13M, T12TM/T13TM,
          T13/T1H, T13T/T1HT, T13M/T1HM, T13TM/T1HTM,
          T0eM/AT0, T0eTM/AT,
          T1eM/VT1, T1eTM/VT,
          A/AT0, A/AT1, AT0/AT, AT1/AT,
          V/VT0, V/VT1, VT0/VT, VT1/VT,
          A/AV, AT0/T0AV, AT1/T1AV, AT/TAV,
          V/AV, VT0/T0AV, VT1/T1AV, VT/TAV,
          T02TM/SM, T12TM/SM,
          T0/T02, T0M/T02M, T/T02T, TM/T02TM,
          T1/T12, T1M/T12M, T/T12T, TM/T12TM}
    \draw [-] (\from) -- (\to);
    \path (M) edge [out=215, in=90] (A);
    \path (T1M) edge [out=215, in=90] (AT1);
    \path (M) edge [out=325, in=90] (V);
    \path (T0M) edge [out=325, in=90] (VT0);

    \foreach \from/\to in {
          T0H/T0e, T0HT/T0eT, T0HM/T0eM, T0HTM/T0eTM,
          T1H/T1e, T1HT/T1eT, T1HM/T1eM, T1HTM/T1eTM}
    \draw [dotted] (\from) -- (\to);

    \draw [dashed,smooth] plot coordinates{
      ($(T02M) +(-0.5,2)$)
      ($(T02M) +(0.5,0.5)$)
      ($ (SM)  +(0.2,0.3)$)
      ($(T12M) +(-0.5,0.5)$)
      ($(T12M) +( 0.5,2)$)
      };
    \draw [dashed,smooth] plot coordinates{
      ($(AT)    +(0.2,-1.0)$)
      ($(T0eTM) +(0.4, 0.5)$)
      ($ (SM)  +( 0.2,-0.2)$)
      ($(T1eTM)+(-0.4, 0.5)$)
      ($(VT)   +(-0.2,-1.0)$)
      };
    \draw [dashed,smooth] plot coordinates{
      ($(T0AV) +(-0.2,-1.0)$)
      ($(T0L)  +(-0.2, 0.5)$)
      ($ (SM)  +( 0.2,-0.3)$)
      ($(T1L)  +( 0.2, 0.5)$)
      ($(T1AV) +( 0.2,-1.0)$)
      };

    \node[fill=none,scale=2] at ($ (T0) + (-2,0.5) $) {\textbf{Section~\ref{section:introduction}}};
    \node[fill=none,scale=2] at ($ (T02) + (0.5,1) $) {\textbf{Section~\ref{section:T02}}};
    \node[fill=none,scale=2] at ($ (AT) + (-1,-0.5) $) {\textbf{Section~\ref{section:T02}}};
    \node[fill=none,scale=2] at ($ (T0AV) + (-1,-0.5) $) {\textbf{Sec.\ \ref{section:lambda}}};
    \node[fill=none,scale=2] at ($ (TAV) + (0,-0.5) $) {\textbf{Section~\ref{section:L}}};
  \end{tikzpicture}
}
\newcommand{\CLA}{\mathcal{L}_A}
\newcommand{\CBA}{\mathcal{B}_A}
\begin{document}
\title{Dichotomy on intervals of strong partial Boolean clones}


\author[Schölzel]{Karsten Schölzel}



\begin{abstract}
The following result has been shown recently in the form of a dichotomy: 
For every total clone $C$ on $\2 := \{0,1\}$, the set $\intervalD{C}$ of all partial clones on $\2$ 
whose total component is $C$, is either finite or of continuum cardinality. In this paper we show
that the dichotomy holds, even if only strong partial clones are considered, i.e., partial clones
which are closed under taking subfunctions: 
For every total clone $C$ on $\2$, the set $\intervalStr{C}$ of all strong partial clones on $\2$ 
whose total component is $C$, is either finite or of continuum cardinality.
\end{abstract}

\maketitle

\section{Introduction} \label{section:introduction}

Let $A$ be an arbitrary finite set. In the case we deal with Boolean clones we have $A = \2 := \{0,1\}$.

A function $f: A^n \to A$ is called a total function on $A$. A function $f: S \to A$ with $S \subseteq A^n$ is called
partial function on $A$ and we denote the domain by $\dom f := S$. The set $\OA$ is the set of all total functions on $A$, and
$\PA$ is the set of all partial functions on $A$.

The function $e_i^n: A^n \to A$ defined by $e_i^n(x_1,\dots,x_n) := x_i$ is called the $n$-ary \emph{projection} onto
the $i$-th coordinate. For each $a \in A$ the function $c_a^n: A^n \to A$ is defined as $c_a(\xx) = a$ for all $\xx \in A^n$.

  Let $f \in \PA$ be $n$-ary and let $g_1,\dots,g_n \in \PA$ be $m$-ary.
  The \emph{composition} $F := f(g_1,\dots,g_n)$ is an $m$-ary partial function defined by 
  \[
    F(x_1,\dots,x_m) := f(g_1(x_1,\dots,x_m),\dots,g_n(x_1,\dots,x_m))
  \]
  and
  \[\dom F := \left\{ x \in \bigcap_{i=1}^n \dom g_i \,\middle|\, (g_1(x),\dots,g_n(x)) \in \dom f \right\}.\]

$C \subseteq \PA$ is called a partial clone if it is composition closed and contains the projections.
If additionally $C \subseteq \OA$ then $C$ is a total clone.

Let $f,g \in \PA$. Then $f$ is a restriction (or subfunction) of $g$ if $\dom f \subseteq \dom g$ and 
$f(x) = g(x)$ for all $x \in \dom f$, short $f \leq g$.
Let $X \subseteq \PA$. Then the set $\Str{X} \subseteq \PA$ is defined by
\[ \Str{X} := \{ f \in \PA \mid \exists g \in X: f \leq g \}. \]
If $X = \Str{X}$ then $X$ is called \emph{strong}, or \emph{restiction closed}. That means, that $X$
contains every restriction of every of its functions,
i.e., $f \in C$ for every $f \in \PA$ and $g \in C$ with $f \leq g$.

Let $\RelsA[h]$ be the set of all $h$-ary relations on $A$ for some $h \geq 1$, i.e., 
$\RelsA[h] := \{ X \mid X \subseteq A^h \}$. Furthermore, let $\RelsA := \bigcup_{h \geq 1} \RelsA[h]$.

Let $\varrho \in \RelsA[h]$, and $f: S \to A$ with $S \subseteq A^n$ an $n$-ary partial function.
Then $f$ preserves $\varrho$ iff $f(M) \in \varrho$ for any $h\times n$ matrix $M = (m_{ij})$ whose  
rows belong to the domain of $f$, i.e. $(m_{i1},\dots,m_{in}) \in \dom f$ for all $i$, and whose columns belong to $\varrho$. 

Let $\pPol R$ be the set of all partial functions preserving every relation $\varrho \in R$. 
Let $\Pol R := (\pPol R ) \cap \OA$ the set of all total functions preserving every relation $\varrho \in R$.

There at least three different types of intervals which we consider here.
Let $C$ be a total clone of $\OA$.
Then we can define the three intervals $\intervalD{C}$, $\intervalStr{C}$, and $\intervalStrUp{C}$ by
\begin{align*}
  \intervalD{C} & := \{ X \subseteq \PA \mid \textrm{$X$ partial clone}, C = X \cap \OA \} \\
  \intervalStr{C} & := \{ X \subseteq \PA \mid \textrm{$X$ strong partial clone}, C = X \cap \OA \} \\ 
  \intervalStrUp{C} & := \{ X \subseteq \PA \mid \textrm{$X$ strong partial clone}, C \subseteq X \} \\ 
               & = \bigcup_{\substack{D \textrm{ total clone} \\ C \subseteq D}} \intervalStr{D} 
\end{align*}
Clearly, $\intervalStr{C} \subseteq \intervalD{C}$ holds.



The following total Boolean clones are needed in this paper, and every other total Boolean clone can be written
as the intersection of some of these.
\begin{eqnarray*}
T_a & = & \Pol \{a\} \textnormal{ for } a \in \{0,1\} \\
T_{a,\mu} & = & \Pol \left(\{0,1\}^\mu \setminus \{ (b,\dots,b) \}\right) \textnormal{ for } b \in \{0,1\}, b \neq a \\
T_{a,\infty} & = & \bigcap_{\mu \geq 2} T_{a,\mu} \textnormal{ for } a \in \{0,1\} \\ 
M & = & \Pol \begin{pmatrix} 0 & 0 & 1 \\ 0 & 1 & 1 \end{pmatrix} \\
  && \textnormal{(set of all monotone functions)} \\
  S & = & \Pol \begin{pmatrix} 0 & 1 \\ 1 & 0 \end{pmatrix} \\
    && \textnormal{(set of all self-dual functions)} \\
  L & = & \Pol \left\{ (x,x,y,y), (x,y,x,y), (x,y,y,x) \mid x,y \in \{0,1\} \right\} \\
  && \textnormal{(set of all linear functions)} \\
\Lambda & = & \clone \left\{ \land, c_0, c_1 \right\} \\
V & = &       \clone \left\{ \lor,  c_0, c_1 \right\} \\
\Omega_1 & = & \clone \left(\OpBool[1]\right)
\end{eqnarray*}

\begin{figure}[ht]
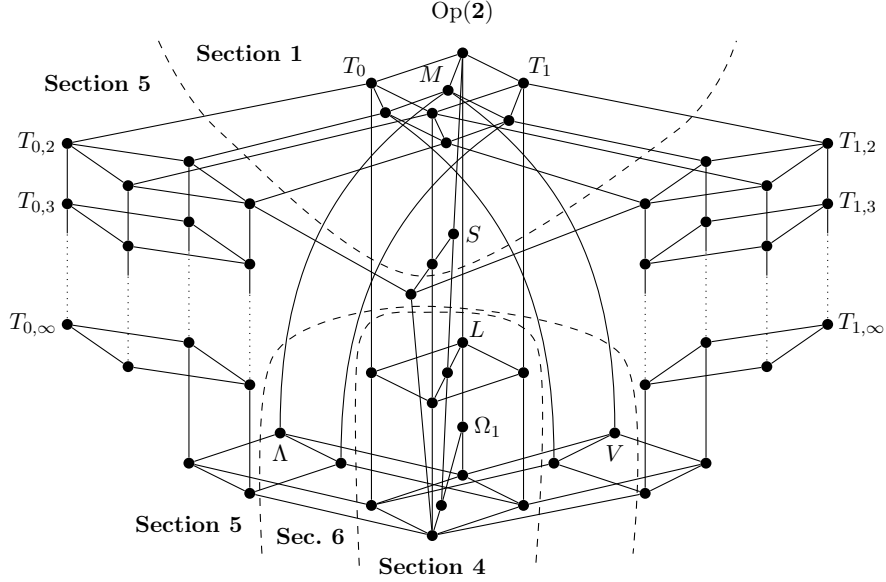

\begin{center}
  \PostsLattice{0.8}
\end{center}
\caption{Post's lattice 
(with indications in which sections the corresponding intervals are handled)}
\label{figure:PostsLattice}
\end{figure}

In \cite{Alekseev-Voronenko:1994,Strauch:1995:MT0T1,Strauch:1996:ST0T1,Strauch:1997a,LauSchoelzel:2010ismvl2}
the finite intervals $\intervalD{C}$ have been determined and in \cite{HaddadSimons:2003,FugereHaddad:1998} the
finite intervals of the form $\intervalStrUp{C}$. These results can be assembled into the following theorem.
The column with the sizes for $\intervalStr{C}$ can be easily deduced from the sizes $\intervalStrUp{C}$ and
Post's lattice. The finite intervals $\intervalStrUp{M \cap T_0 \cap T_1}$ and $\intervalStrUp{S \cap T_0 \cap T_1}$
are displayed in Appendix~\ref{section:finiteIntervalsDrawings}.

\begin{theorem} \label{theorem:finiteIntervals}
  Let $C$ be a total Boolean clone with
  \[
    T_0 \cap T_1 \cap M \subseteq C \textnormal{ or } T_0 \cap T_1 \cap S \subseteq C. 
  \]
  Then $\intervalD{C}$ and $\intervalStrUp{C}$ are finite sets. Furthermore it holds that
  \[
\begin{array}{|r|c|c|c|}
\hline
C & |\intervalD{C}| & |\intervalStrUp{C}| & |\intervalStr{C}| \\
\hline
\OpBool & 3 & 1 & 1 \\
T_a \; (a\in \{0,1\}) & 6 & 2 & 1 \\
M & 6 & 2 & 1 \\
S & 6 & 2 & 1 \\
T_0 \cap T_1 & 30 & 7 & 4 \\
M \cap T_a \; (a\in \{0,1\}) & 15 & 5 & 2 \\
M \cap T_0 \cap T_1 & 101 & 25 & 13 \\
S \cap T_0 \cap T_1 & 380 & 33 & 25 \\
\hline
\end{array}
  \]
\end{theorem}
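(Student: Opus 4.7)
The plan is to argue that the table essentially compiles results already present in the literature, together with a short book-keeping argument that deduces the column for $|\intervalStr{C}|$ from the other two. For the first and third columns I would appeal directly to the cited papers: the values of $|\intervalD{C}|$ are the contents of Alekseev--Voronenko, Strauch and Lau--Sch\"olzel, and the values of $|\intervalStrUp{C}|$ are due to Haddad--Simons and Fug\`ere--Haddad. In particular, the finiteness of $\intervalD{C}$ and of $\intervalStrUp{C}$ under the hypothesis $T_0 \cap T_1 \cap M \subseteq C$ or $T_0 \cap T_1 \cap S \subseteq C$ is an explicit conclusion of those works, so no new argument is needed for these two columns.

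The key conceptual step is the passage from $|\intervalStrUp{C}|$ to $|\intervalStr{C}|$. Here I would use the observation that every strong partial clone $X$ has a well-defined total component $D := X \cap \OpBool$, which is itself a total clone, and that $X \in \intervalStr{D}$ for exactly this $D$. Hence, for any total clone $C$,
\[
  \intervalStrUp{C} \;=\; \bigsqcup_{\substack{D \text{ total clone}\\ C \subseteq D}} \intervalStr{D},
\]
the union being disjoint. Taking cardinalities and solving for the term indexed by $D=C$ yields
\[
  |\intervalStr{C}| \;=\; |\intervalStrUp{C}| \;-\; \sum_{C \subsetneq D} |\intervalStr{D}|,
\]
and in particular $\intervalStr{C}$ is finite whenever $\intervalStrUp{C}$ is.

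To execute the computation I would proceed top-down through Post's lattice, starting with $\OpBool$ (where trivially $|\intervalStr{\OpBool}| = |\intervalStrUp{\OpBool}| = 1$) and working my way down through the rows of the table. For each row the set of total clones $D$ with $C \subsetneq D$ can be read off Figure~\ref{figure:PostsLattice}, and the values $|\intervalStr{D}|$ are already known by induction. For instance, for $C = T_0 \cap T_1$ the strict upper covers contribute $|\intervalStr{T_0}|+|\intervalStr{T_1}|+|\intervalStr{\OpBool}| = 3$, giving $|\intervalStr{T_0 \cap T_1}| = 7-3 = 4$, in agreement with the table. The one subtlety to watch is the collapse $S \cap T_0 = S \cap T_1 = S \cap T_0 \cap T_1$, so the sum for $C = S \cap T_0 \cap T_1$ ranges only over $S$, $T_0 \cap T_1$, $T_0$, $T_1$ and $\OpBool$, giving $33 - 1 - 4 - 1 - 1 - 1 = 25$.

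The main obstacle lies entirely in the cited literature, where the intervals $\intervalD{C}$ and $\intervalStrUp{C}$ are enumerated by substantial case analysis; I would treat those enumerations as black-box inputs. Given them, the column for $|\intervalStr{C}|$ is a routine computation in Post's lattice, and finiteness of $\intervalStr{C}$ is immediate from the inclusion $\intervalStr{C} \subseteq \intervalStrUp{C}$.
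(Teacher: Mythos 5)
Your proposal is correct and follows essentially the same route as the paper, which likewise treats the enumerations of $\intervalD{C}$ and $\intervalStrUp{C}$ as citations to the literature and states that the $|\intervalStr{C}|$ column "can be easily deduced from the sizes $\intervalStrUp{C}$ and Post's lattice." Your disjoint-union decomposition $\intervalStrUp{C} = \bigsqcup_{C \subseteq D} \intervalStr{D}$ and the resulting top-down subtraction (including the correct handling of $S \cap T_0 = S \cap T_1 = S \cap T_0 \cap T_1$) is exactly the intended deduction, and your sample computations check out against the table.
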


In \cite{Alekseev-Voronenko:1994, Strauch:1997b} it was shown that the intervals $\intervalD{C}$ for subclones 
$C \subseteq B$ with $B \in \{L,\Lambda, V, T_{0,\infty}, T_{1,\infty}\}$ have the size of the continuum.
Then in \cite{LauSchoelzel:2010ismvl2} the remaining intervals were determined to be infinite. The authors
of \cite{CouceiroHaddadSchoelzelWaldhauser:2013:ISMVL} then finished the determination of the intervals
of the form $\intervalD{C}$ to yield the following theorem.

\begin{theorem}
  Let $C$ be a total Boolean clone such that $C \subseteq B$ and $B \in \{L,\Lambda, V, T_{0,2}, T_{1,2}\}$.
  Then the set $\intervalD{C}$ has the cardinality of the continuum.
\end{theorem}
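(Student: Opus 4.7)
The plan is to establish the continuum lower bound by producing $2^{\aleph_0}$ distinct partial clones inside each such $\intervalD{C}$; the matching upper bound is automatic because $\PA$ is countable and so there are at most continuum many partial clones altogether. By the Galois connection between partial clones and relations on $\2$ via $\pPol$ and $\Inv$, every partial clone arises as $\pPol R$ for some $R \subseteq \RelsA$, so it suffices to construct suitable separating families of relations together with ``witness'' partial functions.

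The central technical step I would aim for is the following independence lemma. For each choice of $B \in \{L, \Lambda, V, T_{0,2}, T_{1,2}\}$ build a countable sequence of relations $\varrho_1, \varrho_2, \ldots$ on $\2$ and a matching sequence of non-total partial functions $f_1, f_2, \ldots$ satisfying: (i) each $f_i$ preserves $\varrho_j$ for every $j \neq i$; (ii) $f_i$ does not preserve $\varrho_i$; and (iii) every $\varrho_i$ is invariant under $C$. Given this, for every $S \subseteq \omega$ set
\[
  P_S := \pPol \bigl(\Inv C \cup \{\varrho_i : i \in S\}\bigr).
\]
Since $C = \Pol \Inv C$ and the $\varrho_i$ are $C$--invariants, the total part of every $P_S$ equals exactly $C$, while (i)--(ii) give $f_i \in P_S \Leftrightarrow i \notin S$. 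Consequently $S \mapsto P_S$ is injective and produces $2^{\aleph_0}$ distinct members of $\intervalD{C}$.

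The construction of the independent family $(\varrho_i, f_i)$ then splits by cases, but $\Lambda$/$V$ and $T_{0,2}$/$T_{1,2}$ are dual under the $0 \leftrightarrow 1$ swap, leaving three essentially different constructions. For $L$ one uses affine relations encoding progressively many independent linear parity constraints and takes the $f_i$ to be restrictions of XOR--like functions that violate exactly the $i$-th constraint. For $T_{0,2}$ (and dually $T_{1,2}$) one starts from the natural tower $\{0,1\}^\mu \setminus \{(1,\ldots,1)\}$ responsible for the Post subchain $T_{0,\mu}$ and refines it by splicing in coordinate identifications so that the induced partial polymorphism lattices decouple. For $\Lambda$ (and dually $V$) one exploits the abundance of inessential variables admitted by $\Lambda$ to plant partial functions of steadily growing essential arity that remain consistent with the $\land$--structure.

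The main obstacle is twofold. First, one must ensure that enlarging $\Inv C$ by the $\varrho_i$ does not accidentally push the total component above $C$; this is handled by choosing the $\varrho_i$ themselves as invariants of $C$, so that $\Pol$ of the enlarged set is still $C$ by the Galois closure. The second and genuinely combinatorial difficulty is simultaneous independence: violating $\varrho_i$ must be compatible with preserving every $\varrho_j$ for $j \neq i$, and engineering relations for which these constraints decouple uniformly in all pairs $(i,j)$ is where the real work of the cited papers of Alekseev--Voronenko, Strauch, and Couceiro--Haddad--Schölzel--Waldhauser lies. Once that decoupling is achieved in each of the three cases, the argument above delivers the continuum.
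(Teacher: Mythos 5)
There is a genuine gap, and it sits at the heart of your construction rather than in the combinatorial details you defer to the end. You define $P_S := \pPol\bigl(\Inv C \cup \{\varrho_i : i \in S\}\bigr)$ while simultaneously requiring each $\varrho_i$ to be an invariant of $C$ (your condition (iii)). But then $\{\varrho_i : i \in S\} \subseteq \Inv C$, so $P_S = \pPol \Inv C$ for \emph{every} $S$: the map $S \mapsto P_S$ is constant, not injective. Worse, $\pPol \Inv C$ is exactly $\Str{C}$, the least element of $\intervalStr{C}$ (one has $\Str{C} \subseteq \pPol \Inv C \subseteq \pPol B = \Str{C}$ for a plain basis $B \subseteq \Inv C$, the fact the paper quotes from Creignou--Kolaitis--Zanuttini), and every member of $\Str{C}$ preserves every $C$-invariant relation; hence no partial function satisfying your condition (ii), namely $f_i \notin \pPol \varrho_i$, can lie in any $P_S$ at all. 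Conditions (i)--(iii) therefore cannot do the separating work you assign to them. The repair is to drop $\Inv C$ from the generating set entirely: choose the $\varrho_i$ so that $\Pol \varrho_i = C$ \emph{exactly} (condition (iii) only guarantees $\Pol \varrho_i \supseteq C$) and set $P_S := \bigcap_{i \in S} \pPol \varrho_i$ for non-empty $S$; then the total component is $C$ and your witness functions, or a purely relational non-definability argument, can separate the $P_S$.

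That corrected scheme is what the cited literature does, and it is also what this paper does for the stronger statement about $\intervalStr{C}$ (which implies the present one, since $\intervalStr{C} \subseteq \intervalD{C}$). Note that the paper does not reprove the quoted theorem but cites Alekseev--Voronenko, Strauch, Lau--Sch\"olzel and Couceiro--Haddad--Sch\"olzel--Waldhauser for it; its own machinery for the strong version proceeds by building relations $R_n$ with $\Pol R_n$ equal to the clone in question, forming $\bigcap_{n \in X}\pPol R_n$, proving independence relationally via Romov's definability theorem (Theorem~\ref{theorem:definability}) instead of via witness functions, working from below with generated clones $[\{\xi_j \mid j \in J\} \cup \Str{L}]$ in the case of $L$, and descending to subclones $C \subseteq B$ by a transfer lemma (Lemma~\ref{lemma:strongdownwards}) rather than redoing the construction for each $C$. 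The pairwise ``decoupling'' you correctly identify as the real work is indeed the substance of those proofs, but it cannot be carried out inside the framework as you have set it up.
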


As stated in \cite{CouceiroHaddadSchoelzelWaldhauser:2013:ISMVL} this yields a dichotomy on the size of
the intervals $\intervalD{C}$ for Boolean clones $C$.

\begin{theorem}
  Let $C$ be a total Boolean clone.

  Then $\intervalD{C}$ is either finite or has the cardinality of the continuum. Furthermore,
  $\intervalD{C}$ is finite if and only if $M \cap T_0 \cap T_1 \subseteq C$ or $S \cap T_0 \cap T_1 \subseteq C$.
\end{theorem}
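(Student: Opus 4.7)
The plan is to assemble the dichotomy from the two preceding theorems by a case analysis on Post's lattice. Every total Boolean clone $C$ should fall into exactly one of two regimes: regime (A), where $M \cap T_0 \cap T_1 \subseteq C$ or $S \cap T_0 \cap T_1 \subseteq C$; and regime (B), where $C \subseteq B$ for some $B \in \{L, \Lambda, V, T_{0,2}, T_{1,2}\}$.

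In regime (A), Theorem~\ref{theorem:finiteIntervals} gives that $\intervalD{C}$ is finite; in regime (B), the immediately preceding theorem gives that $\intervalD{C}$ has the cardinality of the continuum. Since these conclusions are incompatible, the two regimes are automatically mutually exclusive. Hence, provided the two regimes together exhaust every total Boolean clone, both the dichotomy and the ``if and only if'' characterization follow at once: $\intervalD{C}$ is finite if and only if $C$ lies in regime (A), because regime (B) would force continuum cardinality and so contradict finiteness.

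The main obstacle is therefore the exhaustiveness check, which is a lattice-theoretic inspection of Post's lattice (Figure~\ref{figure:PostsLattice}). First I would note that self-duality forces $S \cap T_0 = S \cap T_1 = S \cap T_0 \cap T_1$, so the total Boolean clones satisfying regime (A) are precisely the ten clones $\OpBool$, $T_0$, $T_1$, $M$, $S$, $T_0 \cap T_1$, $M \cap T_0$, $M \cap T_1$, $M \cap T_0 \cap T_1$, and $S \cap T_0 \cap T_1$ appearing in the table of Theorem~\ref{theorem:finiteIntervals}. It then remains to verify that every other clone in Post's lattice is a subclone of one of $L, \Lambda, V, T_{0,2}, T_{1,2}$. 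This reduces to tracing the chains $T_{a,m}$ (for $a \in \{0,1\}$ and $2 \leq m \leq \infty$), the clones $\Lambda$, $V$, $L$, $\Omega_1$, and their various intersections, confirming the required containment in each case. The verification is routine though tedious, and once completed the theorem follows.
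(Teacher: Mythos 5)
Your proposal is correct and takes essentially the same route as the paper, which presents this theorem as an immediate consequence of Theorem~\ref{theorem:finiteIntervals} and the preceding continuum theorem, the only content being the (standard) verification on Post's lattice that every total Boolean clone falls into one of the two regimes. Your identification of the ten clones in regime (A) and the reduction of the exhaustiveness check to the chains $T_{a,m}$ and the clones below $L$, $\Lambda$, $V$ is exactly the intended argument.
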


The aim of this paper is to show that this result can be strengthend in the sense that only strong 
partial clones are considered. That means $\intervalD{C}$ will be replaced by $\intervalStr{C}$ in the 
statement of the last theorem. Since $\intervalStr{C} \subseteq \intervalD{C}$ for every Boolean clone $C$,
we already have that $\intervalStr{C}$ is finite if 
$M \cap T_0 \cap T_1 \subseteq C$ or $S \cap T_0 \cap T_1 \subseteq C$.
Thus we will show that the interval $\intervalStr{C}$ has the cardinality of the continuum for all 
other Boolean clones $C$.

Although we focus on the case of Boolean clones, there have been some investigations into
the general case with $|A| \geq 2$, for example \cite{Haddad2007257} and \cite{Had-L-R:2006}.
Some of these results will be extended with the help of Lemma \ref{lemma:diffconstants}.

\section{Further definitions} \label{section:furtherDefinitions}

For some natural numbers $n,m \in \IN$ with $n \leq m$ we define the sets $[n,m] := \{n,n+1,\dots,m\}$,
and $[n] := [1,n]$. Tuples will be written with boldface small letters, and with the exception of $\2 = \{0,1\}$ 
a small boldface letter signifies a tuple. For a tuple $\xx := (x_1,\dots,x_n) \in A^n$ 
we define the set of its entries by $[\xx] := \{x_1,\dots,x_n\}$, and let $|\xx| := |[\xx]|$. 
For $I \subseteq [n]$ we let $\xx_I := \{ x_i \mid i \in I \}$. For $\ii = (i_1,\dots,i_l) \in [n]^l$
with $l \in \IN$ we define $\xx_{\ii} := (x_{i_1}, \dots, x_{i_l}) \in A^l$.
We will often use the two special tuples $\0 := (0,\dots,0)$ and $\1 := (1,\dots,1)$; 
the length of these tuples can be deduced from the context.

\subsection{Romov's definability lemma}

The statement of Theorem~\ref{theorem:definability} proven by Romov in \cite{Romov:1981} 
gives a nice characterization of the constructability
of relations in the co-clone of a strong partial clone. This enables us to prove 
the Theorems~\ref{theorem:T02:nonconstructible} and \ref{theorem:Lambda:nonconstructible} just
with relational methods.

The relation $\rho \in \RelsA[h]$ is called \emph{irredundant} iff it fulfills the following two conditions:
\begin{enumerate}[label=(\roman*)]
  \item $\rho$ has no duplicate rows, i.e., for all $i,j$ with $1 \leq i < j \leq h$, there is a tuple 
    $(a_1,\dots,a_h) \in \rho$ with $a_i \neq a_j$;
  \item $\rho$ has no fictitious coordinates, i.e., there is no $i \in \{1,\dots,h\}$, such that $(a_1,\dots,a_h) \in \rho$
    implies $(a_1,\dots,a_{i-1},x,a_{i+1},\dots,a_h) \in \rho$ for all $x \in A$.
\end{enumerate}

For a relation $\sigma \in \RelsA[h]$ we define $\Arity \sigma := h$.

\begin{theorem} \label{theorem:definability} 
  Let $\Sigma \subseteq \RelsA$ and $\rho \in \RelsA[t]$ be relations. Furthermore let $\rho$ be irredundant.
  Then 
  \[
  \bigcap_{\sigma \in \Sigma} \pPol \sigma \subseteq \pPol \rho
  \]
  iff
  there are some $\gamma_\sigma \subseteq [t]^{\Arity \sigma}$ for all $\sigma \in \Sigma$
  such that
  \[
  \rho = \{ \xx \in A^t \mid \xx_\ii \in \sigma \text{ for all } \ii \in \gamma_\sigma \text{ and } \sigma \in \Sigma \}
  \]
  and
  \[
  [t] = \bigcup_{\sigma \in \Sigma} \bigcup_{\ii \in \gamma_\sigma} [\ii].
  \]
\end{theorem}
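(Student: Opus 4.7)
The plan is to carry out the standard ``canonical partial function'' argument due to Romov. The backward implication $(\Leftarrow)$ is a direct verification: assuming that $\rho$ is cut out by the conjunction of constraints $\xx_\ii \in \sigma$, for any $f \in \bigcap_{\sigma \in \Sigma} \pPol \sigma$ and any matrix $M$ with rows in $\dom f$ and columns in $\rho$, the submatrix obtained by projecting $M$ onto the column indices $\ii$ has rows in $\dom f$ and columns in $\sigma$; hence the $\ii$-subtuple of $f(M)$ lies in $\sigma$ for each constraint, and therefore $f(M) \in \rho$.

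For the forward direction $(\Rightarrow)$ I would enumerate $\rho = \{\xx^1,\dots,\xx^k\}$, which is finite since $A$ is, and view $\rho$ as a $k \times t$ matrix with rows $\xx^j$. Its $i$-th column is $c_i := (x^1_i,\dots,x^k_i) \in A^k$ for $i \in [t]$, and irredundancy~(i) guarantees that the $c_i$ are pairwise distinct. Then I would set
\[
  \gamma_\sigma := \{\ii \in [t]^{\Arity \sigma} \mid \xx_\ii \in \sigma \text{ for every } \xx \in \rho\},
\]
and $\rho^* := \{\xx \in A^t \mid \xx_\ii \in \sigma \text{ for all } \ii \in \gamma_\sigma,\, \sigma \in \Sigma\}$. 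Trivially $\rho \subseteq \rho^*$, so the task reduces to proving the reverse inclusion together with the coverage property $[t] = \bigcup_\sigma \bigcup_{\ii \in \gamma_\sigma}[\ii]$.

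The heart of the proof is the following construction. Given any $\xx^* \in \rho^*$, I would define a $k$-ary partial function $f$ with $\dom f := \{c_1,\dots,c_t\}$ by $f(c_i) := x^*_i$, which is well-defined by the distinctness of the columns. To verify $f \in \pPol \sigma$, take a matrix $M$ of shape $\Arity\sigma \times k$ whose rows $c_{i_1},\dots,c_{i_r}$ lie in $\dom f$ (set $\ii := (i_1,\dots,i_r)$) and whose columns lie in $\sigma$. The $l$-th column of $M$ is exactly $\xx^l_\ii$, so the column condition is precisely the statement $\ii \in \gamma_\sigma$, whence $f(M) = \xx^*_\ii \in \sigma$ because $\xx^* \in \rho^*$. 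Thus $f \in \bigcap_\sigma \pPol \sigma \subseteq \pPol \rho$ by hypothesis. Applying $f$ to the $t \times k$ matrix whose rows are $c_1,\dots,c_t$ (so that its columns are exactly $\xx^1,\dots,\xx^k \in \rho$) outputs the tuple $\xx^*$, forcing $\xx^* \in \rho$ and hence $\rho^* = \rho$.

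It remains to establish the coverage condition, for which I would invoke irredundancy~(ii): if some coordinate $i^* \in [t]$ appeared in no $\ii \in \gamma_\sigma$, then membership in $\rho^* = \rho$ would be insensitive to the value at position $i^*$, making $i^*$ a fictitious coordinate and contradicting (ii). The main obstacle is the canonical construction of $f$ and the bookkeeping showing that ``columns of $M$ lie in $\sigma$'' translates exactly to ``$\ii \in \gamma_\sigma$''; once that correspondence is transparent, both the preservation check for $f$ and the deduction that $\xx^* \in \rho$ are short direct chases.
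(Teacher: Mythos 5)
The paper offers no proof of this statement at all: it is imported verbatim from Romov's 1981 paper (cited as such in Section~2) and used as a black box, so there is no in-paper argument to compare yours against. Your proof is the standard canonical-function argument and, as far as I can check, it is correct and complete. The backward direction is the routine row-selection computation (note only that ``projecting onto the column indices $\ii$'' really means selecting the rows of the $t\times n$ matrix indexed by $\ii$; the selected rows stay in $\dom f$ and the truncated columns land in $\sigma$). For the forward direction you make the right choices at every point where care is needed: irredundancy~(i) makes the columns $c_1,\dots,c_t$ of the matrix of $\rho$ pairwise distinct, so $f(c_i):=x^*_i$ is well defined (and irredundancy also forces $\rho\neq\emptyset$, so $f$ has positive arity $k$); taking $\gamma_\sigma$ maximal makes the equivalence ``columns of $M$ lie in $\sigma$ iff $\ii\in\gamma_\sigma$'' exact, which is precisely what the preservation check for $f$ needs; and applying $f\in\pPol\rho$ to the full $t\times k$ matrix returns $\xx^*$ and closes the loop $\rho^*\subseteq\rho$. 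Your derivation of the coverage condition from irredundancy~(ii) is also right: an uncovered coordinate would be syntactically absent from the defining conjunction and hence fictitious in $\rho^*=\rho$. The only remark worth adding is that, given irredundancy, the coverage condition is in fact a consequence of the displayed equality rather than an independent requirement, which your argument implicitly shows.
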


\section{Classes of partial functions} \label{section:partialClasses}

The aim of this section is the introduction of classes of partial functions (or shorter: partial classes)
similar to the ideas presented by Harnau in 
\cite{Harnau:19857:Relationenpaare:I,Harnau:19857:Relationenpaare:II,Harnau:19857:Relationenpaare:III}
for total clones. This concepts will prove fruitful in the extension of Theorem~8~\cite{LauSchoelzel:2010ismvl2}
as shown in Lemma~\ref{lemma:strongdownwards}. Since we do not need the full power of the Galois connection
presented by Harnau we will only prove statements about partial classes relevant to this paper.

For the definition of a partial class we need to define the following Maltsev-operations 
$\zeta$, $\tau$, $\Delta$, $\nabla$, and $\star$.
Let $f \in \PA[n]$ and $g \in \PA[m]$.
Then we define
\begin{align*}
  (\zeta f)(x_1,\dots,x_n) & := f(x_2,x_3,\dots,x_n,x_1), \\
  (\tau f)(x_1,\dots,x_n)  & := f(x_2,x_1,x_3,\dots,x_n), \\
  (\Delta f)(x_1,\dots,x_{n-1}) & := f(x_1,x_1,x_2,\dots,x_{n-1}), \\
  & \zeta f = \tau f = \Delta f = f \text{ if } n = 1, \\
  (\nabla f)(x_1,\dots,x_{n+1}) & := f(x_2,\dots,x_{n+1}), \\
  (f \star g)(x_1,\dots,x_{n+m-1}) & := f(g(x_1,\dots,x_m),x_{m+1},\dots,x_{n+m-1}).
\end{align*}

\begin{definition}
  Let $X \subseteq \PA$. Then $X$ is called a \emph{partial class} if it closed 
  under the operations $\star$, $\zeta$, $\tau$, $\nabla$, and $\Delta$.
\end{definition}

\begin{lemma} \label{lemma:intersectingClasses}
  Let $X,Y \subseteq \PA$ be two partial classes. Then $X \cap Y$ is also a partial class.
\end{lemma}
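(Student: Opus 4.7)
The plan is to verify directly from the definition that $Z := X \cap Y$ is closed under each of the five Maltsev-type operations $\star$, $\zeta$, $\tau$, $\nabla$, and $\Delta$, which is what it means to be a partial class. Since the definition of each of these operations produces a well-defined element of $\PA$ from its input(s) (the composition $\star$ can yield a function with empty domain, but that is still a legitimate partial function), the question reduces to a straightforward membership argument.

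For each of the four unary operations $\omega \in \{\zeta, \tau, \nabla, \Delta\}$, I would take an arbitrary $f \in Z$. Then $f \in X$ and $f \in Y$, so closure of $X$ as a partial class gives $\omega f \in X$, and closure of $Y$ gives $\omega f \in Y$; hence $\omega f \in Z$. For the binary operation $\star$, I would take $f, g \in Z$, note that $f, g \in X$ and $f, g \in Y$, and apply closure of each of $X$ and $Y$ under $\star$ to conclude $f \star g \in X \cap Y = Z$. This exhausts the five operations appearing in the definition, so $Z$ is a partial class.

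There is no real obstacle here: the argument is the same one-line verification used to show that the intersection of two subsets of an algebra closed under a fixed list of operations is again closed under those operations (as for subgroups, subrings, subclones, etc.). The only thing worth being explicit about is that each operation is total on the appropriate power of $\PA$, so that its application to elements of $X \cap Y$ produces an element whose membership in $X$ and in $Y$ can be checked independently. For this reason I would expect the formal proof in the paper to be no more than two or three lines.
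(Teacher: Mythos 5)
Your argument is correct and is exactly the standard closure-under-intersection verification; the paper in fact omits the proof of this lemma entirely, treating it as immediate, so your two-paragraph write-up is if anything more detailed than what appears there. Nothing is missing.
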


The partial classes containing the projections are exactly the partial clones. 

If $X, Y \subseteq \PA$, then we define the set $X \star Y \subseteq \PA$ by
\[ X \star Y := \{ f \star g \mid f \in X, g \in Y \}. \]

\subsection{Relation pairs}

Similar to the work done by Harnau in 
\cite{Harnau:19857:Relationenpaare:I,Harnau:19857:Relationenpaare:II,Harnau:19857:Relationenpaare:III}
we introduce relation pairs to characterize strong partial classes.

For each $h \geq 1$ let $\cPairsA[h]$ be the set of all pairs $(\rho, \rho')$ 
with $\rho' \subseteq \rho \subseteq A^h$,
and $\cPairsA := \bigcup_{h \geq 1} \cPairsA[h]$.

Let $(\rho, \rho') \in \cPairsA[h]$ for some $h \geq 1$, and $f \in \PA[n]$ for some $n \geq 1$. Then $f$ \emph{preserves}
the relation pair $(\rho,\rho')$, if for all matrices $M$ with columns in $\rho$, and lines in $\dom f$
the tuple $f(M)$ belongs to $\rho'$. We write $f \in \cPol (\rho,\rho')$, or $(\rho,\rho') \in \cInv f$.

If $\rho = \rho'$ then the preservation of the relation pair $(\rho,\rho')$ 
coincides with the preservation of the relation $\rho$,
i.e., $\cPol (\rho,\rho) = \pPol \rho$.

If $X \subseteq \PA$, and $Q \subseteq \cPairsA$, then we define
\begin{align*}
  \cPol Q & := \bigcap_{q \in Q} \cPol q, \\
  \cInv X & := \bigcap_{f \in X} \cInv f.
\end{align*}

\begin{lemma} \label{lemma:starComposing}
  Let $f \in \cPol (\rho,\rho')$ and $g \in \cPol (\sigma, \sigma')$ 
  with $\sigma' \subseteq \rho \subseteq \sigma$.
  
  Then $f \star g \in \cPol (\rho,\rho')$.
\end{lemma}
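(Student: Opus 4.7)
The plan is to prove this directly from the definitions by splitting the input matrix into two blocks corresponding to the two parts of the $\star$-composition, applying the hypothesis on $g$ to the first block, and then feeding the resulting column together with the second block into $f$.

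Concretely, suppose $f$ is $n$-ary and $g$ is $m$-ary, fix an arbitrary pair $(\rho,\rho') \in \cInv(f\star g)$-candidate of some arity $h$, and take an $h \times (n+m-1)$ matrix $M$ whose columns belong to $\rho$ and whose rows belong to $\dom(f\star g)$. I would split $M$ into the block $M_1$ of its first $m$ columns and the block $M_2$ of the remaining $n-1$ columns. From the definition of $\dom(f\star g)$ recalled in the excerpt, the rows of $M_1$ lie in $\dom g$, and after computing $d := g(M_1)$ column-wise, the rows of $(d \mid M_2)$ lie in $\dom f$. So the whole thing is well-defined and we have $f(d \mid M_2) = (f\star g)(M)$ by unrolling the definition of $\star$.

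Next I would use the two inclusions in the hypothesis in turn. Because $\rho \subseteq \sigma$, the columns of $M_1$ all lie in $\sigma$, so from $g \in \cPol(\sigma,\sigma')$ we get $d \in \sigma'$. Because $\sigma' \subseteq \rho$, this column $d$ may be placed alongside the columns of $M_2$, which already lie in $\rho$, to form a matrix $(d \mid M_2)$ all of whose columns lie in $\rho$. Applying $f \in \cPol(\rho,\rho')$ then gives $(f\star g)(M) = f(d \mid M_2) \in \rho'$, which is exactly what preservation of $(\rho,\rho')$ by $f \star g$ requires.

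There is no real obstacle here; the only thing to be careful about is the bookkeeping of the two conditions defining $\dom(f\star g)$, which guarantee respectively that $g$ can be applied to the rows of $M_1$ and that $f$ can be applied to the rows of $(d \mid M_2)$. The role of each inclusion is also transparent: $\rho \subseteq \sigma$ is needed to feed $\rho$-columns into $g$'s $(\sigma,\sigma')$-preservation, and $\sigma' \subseteq \rho$ is needed to feed the output column $d$ into $f$'s $(\rho,\rho')$-preservation. This explains why the hypothesis must be asymmetric in $\rho,\sigma$ versus $\rho',\sigma'$.
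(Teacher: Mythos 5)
Your proof is correct and follows essentially the same route as the paper's: split the matrix into the first $m$ columns and the remaining $n-1$, apply $g$ to the first block (using $\rho \subseteq \sigma$ to get the output column in $\sigma' \subseteq \rho$), and then apply $f$ to the augmented second block. Your explicit tracking of why the rows of $(d \mid M_2)$ lie in $\dom f$ is if anything slightly more careful than the paper's version.
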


\begin{proof}
  Let $f \in \cPol[n] (\rho,\rho')$ and $g \in \cPol[m] (\sigma, \sigma')$ 
  with $\sigma' \subseteq \rho \subseteq \sigma \in \RelsA[h]$.
  Let $M$ be an $(h,m+n-1)$-matrix with columns $\xx_1,\dots,\xx_{m+n-1} \in \rho$, and
  rows $\yy_1,\dots,\yy_h \in \dom f\star g$. Let $\yy'_j := (\yy_j)_{(1,\dots,m)}$ for each $j \in [h]$.
  
  Then $\yy'_1,\dots,\yy'_h \in \dom g$ by the definition of $\star$, and 
  $\xx_1,\dots,\xx_m \in \sigma$. Thus $\xx := g(\xx_1,\dots,\xx_m) \in \sigma' \subseteq \rho$.
  From this $(f\star g)(\xx_1,\dots,\xx_{m+n-1}) = f(\xx,\xx_{m+1},\dots,\xx_{m+n-1}) \in \rho'$
  and thus $f \star g \in \cPol (\rho,\rho')$.
\end{proof}

\begin{lemma} \label{lemma:cPolqIsClass}
  Let $q \in \cPairsA$. Then $\cPol q$ is a non-empty strong partial class of $\PA$.
\end{lemma}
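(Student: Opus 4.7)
The plan is to verify the three defining properties in turn: non-emptiness, strongness (closure under taking subfunctions), and closure under each of the five Maltsev operations $\zeta,\tau,\Delta,\nabla,\star$.

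For non-emptiness, I observe that the empty unary partial function $f$ with $\dom f = \emptyset$ lies in $\cPol q$ for every $q = (\rho,\rho') \in \cPairsA[h]$: since $h \geq 1$, no $h\times 1$ matrix can have all of its rows in the empty set, so the preservation condition is vacuously satisfied. For strongness, if $f \in \cPol q$ and $g \leq f$, then any matrix $M$ with columns in $\rho$ and rows in $\dom g \subseteq \dom f$ is already a valid input matrix for $f$, and $g(M) = f(M) \in \rho'$ because $g$ and $f$ agree on $\dom g$; hence $g \in \cPol q$.

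For closure under $\star$, I invoke Lemma~\ref{lemma:starComposing} directly with $(\sigma,\sigma') = (\rho,\rho')$: its hypothesis $\sigma' \subseteq \rho \subseteq \sigma$ reduces to the given inclusion $\rho' \subseteq \rho$, which forces the output of the inner composition to fall back inside $\rho$ as needed. For each of the unary Maltsev operations $\zeta,\tau,\Delta,\nabla$ applied to $f \in \cPol[n] q$, the argument follows a uniform template: given a matrix $M$ for the derived function with columns in $\rho$ and rows in the appropriate derived domain, I manufacture a matrix $M'$ for $f$ by applying to the columns of $M$ the column-level counterpart of the operation (a cyclic permutation for $\zeta$, a transposition for $\tau$, duplication of the first column for $\Delta$, deletion of the first column for $\nabla$). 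In every case $M'$ has columns still drawn from $\rho$, its rows lie in $\dom f$ by the very definition of the derived domain, and $(Tf)(M) = f(M') \in \rho'$.

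I do not anticipate any substantive obstacle; the entire argument is a matter of unfolding the definitions of $\cPol$ and of the five operations and tracking matrix indices through each transformation. The only micro-subtlety worth flagging is that closure under $\star$ demands that both factors preserve the \emph{same} pair $q$, and the asymmetric inclusion $\rho' \subseteq \rho$ built into the definition of $\cPairsA$ is exactly what lets the inner output feed into the outer input without leaving $\rho$.
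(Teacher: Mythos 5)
Your proposal is correct and follows essentially the same route as the paper: non-emptiness via the empty-domain function, strongness by noting that a matrix for a subfunction $g \leq f$ is already a matrix for $f$, and closure under $\star$ by invoking Lemma~\ref{lemma:starComposing} with $\sigma=\rho$, $\sigma'=\rho'$. The only difference is cosmetic — the paper dismisses the four unary Maltsev operations as ``easy to see'' where you sketch the column-manipulation argument, and its strongness proof is phrased contrapositively — so nothing further is needed.
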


\begin{proof}
  Let $(\rho,\rho') := q \in \cPairsA$.

  We first show that $\cPol (\rho,\rho')$ is a partial class. Let $f,g \in \cPol (\rho,\rho')$. 
  
  It is easy to see that $\zeta f, \tau f, \Delta f, \nabla f \in \cPol (\rho,\rho')$.
  From Lemma~\ref{lemma:starComposing} with $\sigma = \rho$ and $\sigma' = \rho'$ follows 
  $f \star g \in \cPol (\rho,\rho')$. Thus $\cPol (\rho,\rho')$ is a partial class of $\PA$.

  We now want to show that $\cPol (\rho,\rho')$ is strong.
  Let $f \in \cPol (\rho,\rho')$ and $g \leq f$, and assume to the contrary that $g \notin \cPol (\rho,\rho')$.
  Then there is a matrix $M$ with columns $\xx_1,\dots,\xx_n \in \rho$ and rows $\yy_1,\dots,\yy_h \in \dom g$,
  such that $g(M) \notin \rho'$.
  Since $\dom g \subseteq \dom f$ and $f(M) = g(M) \notin \rho'$. Thus $f \notin \cPol (\rho,\rho')$
  contradicting the assumption. Thus $\cPol (\rho,\rho')$ is strong.

  It is non-empty since the partial function $c_\emptyset$ with empty domain perserves any relation pair $q$.
\end{proof}

\begin{lemma}
  Let $Q \subseteq \cPairsA$. Then $\cPol Q$ is a non-empty strong partial class of $\PA$.
\end{lemma}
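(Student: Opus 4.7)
The plan is to reduce the statement to Lemma \ref{lemma:cPolqIsClass} applied pointwise, together with the observation that arbitrary intersections of strong partial classes are again strong partial classes. By definition, $\cPol Q = \bigcap_{q \in Q} \cPol q$, and each $\cPol q$ is a non-empty strong partial class by Lemma \ref{lemma:cPolqIsClass}. So the only real task is to show that an arbitrary intersection $\bigcap_{i \in I} X_i$ of strong partial classes $X_i \subseteq \PA$ is itself a strong partial class.

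To verify the partial class property I would check each of the five Maltsev operations $\zeta$, $\tau$, $\Delta$, $\nabla$, $\star$ separately. Since $\zeta$, $\tau$, $\Delta$, $\nabla$ are unary and $\star$ is binary, if $f,g \in \bigcap_{i \in I} X_i$ then $f,g \in X_i$ for every $i \in I$, and by the closure of each $X_i$ we get $\zeta f, \tau f, \Delta f, \nabla f, f \star g \in X_i$ for every $i$, hence in the intersection. This is essentially Lemma \ref{lemma:intersectingClasses} extended from binary to arbitrary index sets; the extension is immediate because each closure operation is finitary. Strongness transfers even more easily: if $f \in \bigcap_{i \in I} X_i$ and $g \leq f$, then $g \in X_i$ for each $i$ by strongness of $X_i$, so $g \in \bigcap_{i \in I} X_i$.

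For non-emptiness I would reuse the argument at the end of the proof of Lemma \ref{lemma:cPolqIsClass}: the partial function $c_\emptyset$ with empty domain vacuously preserves every relation pair (no matrix has rows in the empty set), so $c_\emptyset \in \cPol q$ for every $q \in \cPairsA$, and therefore $c_\emptyset \in \cPol Q$.

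I do not anticipate any genuine obstacle here; the lemma is a routine generalisation of Lemma \ref{lemma:cPolqIsClass}, and the only minor point worth flagging is that Lemma \ref{lemma:intersectingClasses} is stated for two partial classes while we need arbitrary intersections. That extension is immediate because the defining closure conditions involve only one or two functions at a time, and the corresponding verification goes through verbatim for any index set $I$.
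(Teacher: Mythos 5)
Your proof is correct and follows essentially the same route as the paper: reduce to Lemma~\ref{lemma:cPolqIsClass} for each $q \in Q$, observe that intersections of strong partial classes are strong partial classes, and use $c_\emptyset$ for non-emptiness. You are in fact slightly more careful than the paper, which cites the two-class Lemma~\ref{lemma:intersectingClasses} without remarking that $Q$ may be infinite; your observation that the extension to arbitrary index sets is immediate (since the closure conditions are finitary) is exactly the right justification.
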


\begin{proof}
  By Lemma~\ref{lemma:cPolqIsClass} we have that $\cPol q$ is a strong partial class for all $q \in Q$.
  Then by Lemma~\ref{lemma:intersectingClasses} and the definition of $\cPol Q$, 
  we see that $\cPol Q$ is a partial class. Furthermore, the intersection of two strong sets is also strong.
  It is non-empty since $c_\emptyset \in \cPol Q$.
\end{proof}

\begin{remark}
  It is possible to show, that for every non-empty strong partial class $X \subseteq \PA$, 
  there is some $Q \subseteq \cPairsA$ with $X = \cPol Q$. Since this and other further properties
  of the operators $\cPol$ and $\cInv$ are not needed in this paper, they will not be proven here.
\end{remark}

\begin{lemma} \label{lemma:emptyConsequentNoTotalFunction}
  Let $\rho \in \RelsA$ with $\rho \neq \emptyset$. 
  
  Then $\cPol (\rho,\emptyset) \cap \OA = \emptyset$.  
\end{lemma}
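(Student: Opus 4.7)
The plan is to observe that this is essentially an immediate consequence of unpacking the definition of preservation: requiring $f(M) \in \emptyset$ for some admissible matrix $M$ is a contradiction, so the only way to be in $\cPol(\rho, \emptyset)$ is to have no admissible matrix at all. For a total $f$, any matrix with columns in $\rho$ is automatically admissible because every row automatically lies in $\dom f = A^n$.

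Concretely, I would suppose for contradiction that some $f \in \OA[n]$ lies in $\cPol(\rho, \emptyset)$, and let $\rho \in \RelsA[h]$. Since $\rho \neq \emptyset$, pick any tuple $\xx = (x_1, \dots, x_h) \in \rho$ and form the $h \times n$ matrix $M$ each of whose $n$ columns equals $\xx$. Then every column of $M$ belongs to $\rho$, and the rows of $M$ are the constant tuples $(x_i, \dots, x_i) \in A^n$, which lie in $\dom f$ because $f$ is total. Hence $M$ is an admissible matrix for the definition of $\cPol(\rho, \emptyset)$, so $f(M)$ must lie in $\emptyset$, which is impossible.

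There is no real obstacle here; the lemma is a sanity check clarifying the role of the second component of a relation pair and will be used later as a way to certify that a relation pair forces its polymorphisms to be genuinely partial. The only thing to watch is the dimension bookkeeping between $h$ (the arity of $\rho$) and $n$ (the arity of $f$), which is handled by choosing the matrix to have $n$ identical copies of the single chosen tuple $\xx$ as columns.
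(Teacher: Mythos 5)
Your proof is correct and is essentially identical to the paper's: both pick a tuple $\xx \in \rho$, form the matrix consisting of $n$ copies of $\xx$ as columns, note that all rows lie in $\dom f$ because $f$ is total, and conclude that $f(M) \in \emptyset$ is impossible. The only cosmetic difference is that you phrase it as a contradiction while the paper directly concludes $f \notin \cPol(\rho,\emptyset)$.
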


\begin{proof}
  Let $f \in \OA[n]$, $\rho \in \RelsA[h]$, and $\xx \in \rho$. 
  Let $M$ be the matrix formed by $n$-fold repetition of the column $\xx$. 
  Let the rows of $M$ be called $\yy_1,\dots,\yy_h$. Clearly, $\yy_i \in \dom f$ for all $i \in [h]$ since
  $f$ is a total function. But $f(M) \notin \emptyset$, and thus $f \notin \cPol (\rho,\emptyset)$.
\end{proof}

\begin{lemma}
  Let $\rho \in \RelsA$, $f \in \PA$ and $g \in \cPol (\rho, \emptyset)$.

  Then $f \star g \in \cPol (\rho,\emptyset)$.
\end{lemma}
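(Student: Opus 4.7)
The plan is to argue by contradiction using the matrix characterization of relation-pair preservation, exactly in the spirit of Lemma~\ref{lemma:starComposing}, but exploiting the fact that the ``consequent'' of the relation pair is empty. Write $f \in \PA[n]$ and $g \in \PA[m]$, so that $f \star g$ is $(n+m-1)$-ary. Assume, for contradiction, that $f \star g \notin \cPol(\rho,\emptyset)$. Then by definition there exists an $h \times (n+m-1)$ matrix $M$ with columns $\xx_1,\dots,\xx_{n+m-1} \in \rho$ and rows $\yy_1,\dots,\yy_h \in \dom(f \star g)$.

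Next I would read off the consequences of the definition of $\star$ for the domain: if a tuple $(z_1,\dots,z_{n+m-1})$ lies in $\dom(f \star g)$, then in particular its first $m$ coordinates satisfy $(z_1,\dots,z_m) \in \dom g$. Applied to each row $\yy_j$ of $M$, this shows that the truncated rows $\yy'_j := (\yy_j)_{(1,\dots,m)}$ all belong to $\dom g$. Meanwhile the first $m$ columns $\xx_1,\dots,\xx_m$ of $M$ are already in $\rho$.

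Thus the $h \times m$ submatrix $M'$ formed by the first $m$ columns of $M$ has columns in $\rho$ and rows in $\dom g$, so $g(M')$ is defined. But $g \in \cPol(\rho,\emptyset)$ demands $g(M') \in \emptyset$, which is impossible. This contradiction shows that no such $M$ can exist, and therefore $f \star g \in \cPol(\rho,\emptyset)$.

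There is essentially no real obstacle here: the proof is pure bookkeeping about the dimensions of $\star$ and the definition of $\dom(f \star g)$. The only point worth stating carefully is that $(\rho,\emptyset)$-preservation is really the statement that \emph{no} matrix with columns in $\rho$ and rows in the domain of the function exists; once this is made explicit, the result follows from the fact that $\dom(f \star g)$ projects into $\dom g$ on the first $m$ coordinates, so any would-be bad matrix for $f \star g$ immediately restricts to a bad matrix for $g$.
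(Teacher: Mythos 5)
Your proof is correct and follows essentially the same route as the paper: assume a bad matrix for $f \star g$, restrict to the first $m$ columns, and observe that this yields a bad matrix for $g$. The only cosmetic difference is that the paper treats the case $\rho = \emptyset$ separately, whereas your argument covers it implicitly (the contradiction hypothesis is then vacuous), which is fine.
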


\begin{proof}
  Let $\rho \in \RelsA[h]$, $f \in \PA[n]$ and $g \in \cPol[m] (\rho, \emptyset)$.

  If $\rho = \emptyset$, then $\cPol (\rho,\emptyset) = \cPol(\emptyset,\emptyset) = \PA$. 
  Thus $f \star g \in \cPol (\rho,\emptyset)$.

  Let $\rho \neq \emptyset$. Assume to the contrary, that $f \star g \notin \cPol (\rho,\emptyset)$.
  Then there is a matrix $M$ with columns $\xx_1,\dots,\xx_{m+n-1} \in \rho$, 
  and rows $\yy_1,\dots,\yy_h \in \dom (f \star g)$. 
  We can now look at the matrix $M'$ formed by the first $m$ columns, and with rows $\yy_1',\dots,\yy_h'$.
  Then $\yy_i \in \dom (f \star g)$ implies $\yy_i' \in \dom g$ for all $i \in [h]$. But
  since $\xx_1,\dots,\xx_m \in \rho$ we get $g \notin \cPol (\rho, \emptyset)$ in contradiction
  to the assumption.
\end{proof}

\begin{corollary}
  Let $X \subseteq \PA$ and $\rho \in \RelsA$.

  Then $X \star \cPol (\rho,\emptyset) \subseteq \cPol (\rho,\emptyset)$.
\end{corollary}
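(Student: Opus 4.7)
The plan is to observe that this is an immediate unfolding of the preceding lemma. Recall that
\[
X \star \cPol(\rho,\emptyset) = \{ f \star g \mid f \in X, \; g \in \cPol(\rho,\emptyset) \}.
\]
So to prove the stated inclusion, I would pick an arbitrary element $h \in X \star \cPol(\rho,\emptyset)$, write it as $h = f \star g$ with $f \in X$ and $g \in \cPol(\rho,\emptyset)$, and appeal to the previous lemma.

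The hypothesis of that lemma is $f \in \PA$ and $g \in \cPol(\rho,\emptyset)$; since $X \subseteq \PA$, the membership $f \in X$ automatically yields $f \in \PA$, so both hypotheses hold. The conclusion is then exactly $f \star g \in \cPol(\rho,\emptyset)$, which is what we needed. Since $h$ was arbitrary, this establishes $X \star \cPol(\rho,\emptyset) \subseteq \cPol(\rho,\emptyset)$.

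There is essentially no obstacle here: the content of the corollary is entirely in the lemma preceding it, and the only thing to check is that $f \in X$ supplies a legitimate argument for the lemma, which is immediate from $X \subseteq \PA$. Consequently I would present the proof as a one-line application of the previous lemma without further calculation.
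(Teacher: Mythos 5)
Your proof is correct and matches the paper's intent exactly: the corollary is stated without proof immediately after the lemma, precisely because it is the elementwise application of that lemma to $h = f \star g$ with $f \in X \subseteq \PA$ and $g \in \cPol(\rho,\emptyset)$. Nothing further is needed.
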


The following corollary follows from Lemma~\ref{lemma:starComposing}.
\begin{corollary}
  Let $\rho \in \RelsA$.

  Then $\cPol (\rho, \emptyset) \star \pPol \rho \subseteq \cPol (\rho,\emptyset)$.
\end{corollary}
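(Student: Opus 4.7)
The plan is to derive this corollary as an immediate instance of Lemma~\ref{lemma:starComposing}, by identifying $\pPol \rho$ with the relation-pair class $\cPol(\rho,\rho)$ and choosing the parameters of Lemma~\ref{lemma:starComposing} accordingly.

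First, I would recall that the definition of preservation of a relation pair specializes, when both components coincide, to ordinary preservation of a single relation, a fact already noted just after the definition of relation pairs: $\cPol(\rho,\rho) = \pPol \rho$. Hence any $g \in \pPol \rho$ can equivalently be regarded as $g \in \cPol(\sigma,\sigma')$ for the choice $\sigma := \rho$ and $\sigma' := \rho$. The outer function $f \in \cPol(\rho,\emptyset)$ plays the role of the first function in Lemma~\ref{lemma:starComposing}, with its consequent relation being $\emptyset$.

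The key step is then just a check of the hypothesis $\sigma' \subseteq \rho \subseteq \sigma$ of Lemma~\ref{lemma:starComposing}. With the identifications above this becomes $\rho \subseteq \rho \subseteq \rho$, which is trivially satisfied. Applying Lemma~\ref{lemma:starComposing} now gives $f \star g \in \cPol(\rho, \emptyset)$, which is exactly what is required. Taking the union over all choices of $f \in \cPol(\rho, \emptyset)$ and $g \in \pPol \rho$ yields the stated inclusion $\cPol(\rho,\emptyset) \star \pPol \rho \subseteq \cPol(\rho,\emptyset)$.

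There is no real obstacle here: the content is entirely in Lemma~\ref{lemma:starComposing}, and this corollary is merely its specialization to the case where the inner function preserves a relation (rather than a proper pair) and this relation coincides with the antecedent of the outer pair. The only thing one has to be careful about is not to confuse the roles of $\rho$ and $\rho'$ in the statement of Lemma~\ref{lemma:starComposing}: the outer pair is $(\rho, \emptyset)$, so the symbol $\rho'$ in that lemma is instantiated by $\emptyset$, while the symbol $\rho$ in both the lemma and the corollary refers to the same relation.
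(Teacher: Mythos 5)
Your proof is correct and matches the paper exactly: the paper states this corollary follows from Lemma~\ref{lemma:starComposing}, and your instantiation $(\sigma,\sigma') = (\rho,\rho)$ with outer pair $(\rho,\emptyset)$, using $\pPol\rho = \cPol(\rho,\rho)$, is precisely the intended specialization.
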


The last two corollaries can now be combined into the final statement of this subsection.
\begin{corollary} \label{corollary:emptysetIsVeryHungry}
  Let $\rho \in \RelsA$, $T := \cPol (\rho,\emptyset)$ and $D \subseteq \pPol \rho$.

  Then $T \star D \subseteq T$ and $D \star T \subseteq T$.
\end{corollary}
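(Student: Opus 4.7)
The plan is to derive this corollary directly by assembling the two immediately preceding corollaries, each of which handles one of the two inclusions.

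For the inclusion $D \star T \subseteq T$, I would invoke the first of the two preceding corollaries (``$X \star \cPol(\rho,\emptyset) \subseteq \cPol(\rho,\emptyset)$'') with the choice $X := D$. Since $D \subseteq \PA$, this is applicable without any further hypothesis on $D$, and it yields $D \star T \subseteq T$ on the nose. Note that we do not even need the assumption $D \subseteq \pPol \rho$ for this half.

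For the inclusion $T \star D \subseteq T$, I would use the second of the preceding corollaries (``$\cPol(\rho,\emptyset) \star \pPol \rho \subseteq \cPol(\rho,\emptyset)$''). The hypothesis $D \subseteq \pPol \rho$ gives $T \star D \subseteq T \star \pPol \rho$, and the cited corollary then bounds $T \star \pPol \rho$ by $T$, completing the argument.

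There is essentially no obstacle: the corollary is a bookkeeping step that packages the two preceding corollaries into a single symmetric statement, highlighting that $\cPol(\rho,\emptyset)$ is an absorbing set under the $\star$-operation from both sides whenever the partner lies in $\pPol \rho$ (on the right it can in fact be arbitrary). The only thing to double-check is that the hypothesis on $D$ is used only in the $T \star D$ direction, which matches the asymmetry in the two preceding corollaries.
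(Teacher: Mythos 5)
Your proof is correct and matches the paper exactly: the paper gives no separate argument, stating only that ``the last two corollaries can now be combined,'' and your assembly (the first corollary with $X:=D$ for $D\star T\subseteq T$, and monotonicity of $\star$ together with the second corollary for $T\star D\subseteq T$) is precisely that combination. Your observation that the hypothesis $D\subseteq\pPol\rho$ is needed only for the $T\star D$ direction is also accurate.
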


\subsection{Classes to intervals}

In the proof that the interval $\intervalStr{D}$ are of continuum cardinality for some
total clone $D$, we try to make as few constructions as possible. This can be achieved if
we find some clone $C$ with $D \subseteq C$, construct a set $I \subseteq \intervalStr{C}$
of continuum cardinality, and then find restrictions of the partial clones in $I$, such that
these restricted partial clones lie in $\intervalStr{D}$, and $I$ does not collapse.

For this purpose we prove a stronger version of Theorem~8~\cite{LauSchoelzel:2010ismvl2} as follows.

\begin{lemma} \label{lemma:strongdownwards}
Let $C$ and $D$ be clones of $\OA$ with $D \subseteq C$, $T$ a strong partial class of $\PA$,
and $I \subseteq \intervalStr{C}$, such that the following conditions hold
\begin{enumerate}[label=(\roman*)]
  \item \label{enum:strongdownwards:totalempty}
    $T \cap \OA \subseteq D$,
  \item \label{enum:strongdownwards:QiHungry}   
    $T \star \Str{D} \subseteq \Str{D} \cup T$, and  $\Str{D} \star T \subseteq \Str{D} \cup T$,
  \item \label{enum:strongdownwards:different}
    $X \cap T \neq Y \cap T$ for all $X,Y \in I$ with $X \neq Y$.
\end{enumerate}
Then
  \[
    |\intervalStr{D}| \geq |I|.
  \]
\end{lemma}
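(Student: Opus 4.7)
\medskip

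\textbf{Proof proposal.} My plan is to exhibit an injective map $\phi: I \to \intervalStr{D}$, namely
\[
  \phi(X) := (X \cap T) \cup \Str{D},
\]
and show that each $\phi(X)$ is a strong partial clone with total component exactly $D$, and that $\phi$ is injective. The inequality $|\intervalStr{D}| \geq |I|$ then follows.

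First I would verify that $\phi(X) \in \intervalStr{D}$. The total component computes as
\[
  \phi(X) \cap \OA = (X \cap T \cap \OA) \cup (\Str{D} \cap \OA) \subseteq D \cup D = D,
\]
using condition \ref{enum:strongdownwards:totalempty} for the first term, while $D \subseteq \Str{D} \cap \OA \subseteq \phi(X) \cap \OA$, so equality holds. Being the union of two strong sets, $\phi(X)$ is strong. It contains the projections since they lie in $D \subseteq \Str{D}$. It remains to check closure under $\star$, $\zeta$, $\tau$, $\Delta$, $\nabla$, which implies it is a partial clone. The unary operations preserve both $X \cap T$ (intersection of partial classes, Lemma~\ref{lemma:intersectingClasses}) and $\Str{D}$, so they preserve the union. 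For $\star$ there are four cases for $f, g \in \phi(X)$; the cases where both lie in $X \cap T$, or both in $\Str{D}$, are immediate since each is a partial class/clone. In the mixed case, say $f \in X \cap T$ and $g \in \Str{D}$, condition \ref{enum:strongdownwards:QiHungry} gives $f \star g \in \Str{D} \cup T$. If $f \star g \in \Str{D}$, we are done; if $f \star g \in T$, I use the crucial observation that $\Str{D} \subseteq X$ (since $X$ is strong and $D \subseteq C = X \cap \OA$), so $f, g \in X$, whence $f \star g \in X$ as $X$ is a partial clone, and thus $f \star g \in X \cap T \subseteq \phi(X)$. The symmetric mixed case is analogous.

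For injectivity, suppose $X, Y \in I$ with $X \neq Y$. By condition \ref{enum:strongdownwards:different}, $X \cap T \neq Y \cap T$, so without loss of generality pick $f \in (X \cap T) \setminus (Y \cap T)$. Since $f \in T$, we have $f \notin Y$. But $\Str{D} \subseteq Y$ (again because $Y$ is strong and contains $D$), so $f \notin \Str{D}$. Therefore $f \in \phi(X)$ while $f \notin (Y \cap T) \cup \Str{D} = \phi(Y)$, so $\phi(X) \neq \phi(Y)$.

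The main obstacle is the mixed case of closure under $\star$: condition \ref{enum:strongdownwards:QiHungry} alone only pins down $f \star g$ up to the set $\Str{D} \cup T$, which is in general strictly larger than $\Str{D} \cup (X \cap T)$. The resolution is the simple but essential containment $\Str{D} \subseteq X$, which lets the partial-clone closure of $X$ recover the missing piece and bridge the union. Everything else is bookkeeping.
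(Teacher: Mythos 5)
Your proposal is correct and follows essentially the same route as the paper: the same map $X \mapsto (X \cap T) \cup \Str{D}$, the same computation of the total component from condition \ref{enum:strongdownwards:totalempty}, and the same resolution of the mixed $\star$-case via condition \ref{enum:strongdownwards:QiHungry} together with the containment $\Str{D} \subseteq X$. If anything, you are slightly more explicit than the paper on the injectivity step (showing a witness $f \in (X\cap T)\setminus(Y\cap T)$ cannot lie in $\Str{D}$), which the paper leaves implicit.
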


\begin{proof}
  For each $X \in I$ we define $X_D$ by 
  \[
  X_D := \Str{D} \cup (X \cap T).
  \]
  We let $I_D := \{ X_D \mid X \in I \}$, and show that $I_D \subseteq \intervalStr{D}$.
  By \ref{enum:strongdownwards:different} we have that $|I_D| \geq |I|$.
  
  Let $X \in I$ be arbitrary. By \ref{enum:strongdownwards:totalempty} we have that 
  \begin{align*}
    X_D \cap \OA & = ( \Str{D} \cup (X \cap T)) \cap \OA \\
    & = (\underbrace{\Str{D} \cap \OA}_{D}) \cup (\underbrace{X \cap (T \cap \OA)}_{\subseteq D}) \\
    & = D.
  \end{align*}
  Thus we only have to show that $X_D$ is a strong partial clone.

  Since $\Str{D}$, $X$, and $T$ are strong partial classes, we see that $\Str{X_D} = X_D$,
  and that $X_D$ is closed with respect to $\zeta$, $\tau$, $\nabla$ and $\Delta$. Furthermore,
  $X_D$ contains the projections, since $\Str{D} \subseteq X_D$, and $D$ is a clone.

  It remains to show that $X_D$ is closed with respect to $\star$. 
  Let $f,g \in X_D$. We want to show that $f \star g \in X_D$.
  Since $D \subseteq C \subseteq X$, $X \cap T \subseteq X$ and $X$ is a partial clone, 
  we have $f \star g \in X$.

  There are several cases:
  \begin{itemize}
    \item
      $f,g \in \Str{D}$. Then $f \star g \in \Str{D} \subseteq X_D$, since $\Str{D}$ is a strong partial clone.
    \item
      $f,g \in X \cap T$. Then $f \star g \in X \cap T \subseteq X_X$, since $X \cap T$ is a strong partial class.
    \item
      $f \in \Str{D}$, and $g \in X \cap T$; or
      $g \in \Str{D}$, and $f \in X \cap T$. 
      By \ref{enum:strongdownwards:QiHungry} we have $f \star g \in \Str{D} \cup T$.
      Thus 
      \begin{align*}
        f \star g & \in (\Str{D} \cup T) \cap X \\
        & = ( \Str{D} \cap X) \cup (X \cap T) \\
        & = \Str{D} \cup (X \cap T) \\
        & = X_D.
      \end{align*}
  \end{itemize}
  Thus $X_D$ is a strong partial clone with $X_D \cap \OA = D$. This implies $X_D \in \intervalStr{D}$.
  Therefore $I_D \subseteq \intervalStr{D}$, and consequently $|\intervalStr{D}| \geq |I|$.
\end{proof}

One example of the strong partial class $T$ needed in the preceding lemma is the partial class
$\cPol(\{0\},\emptyset)$ of all partial functions not defined on $(0,\dots,0)$. This was implicitly
used for example in \cite{LauSchoelzel:2010ismvl2} and \cite{CouceiroHaddadSchoelzelWaldhauser:2013:ISMVL}.

Each of the sets $I$ defined in this paper will be indexed by the subsets of a countable infinite set 
$N \subseteq \IN$. As such the set $I$ has the same cardinality as the powerset of $\IN$, 
which has the cardinality of the continuum, and therefore $I$ is of continuum cardinality.

\subsection{Subclones missing a constant}

First we use Lemma \ref{lemma:strongdownwards} in a general setting, involving two clones $C$ and $D$ in $\OA$ with
$D \subseteq C$ and $c_a \in C \setminus D$ for some $a \in A$.
For a partial function $f \in \PA[n]$ and some $a \in A$ we define the $(n+1)$-ary partial function $f_a \in \PA$
by
\begin{align*}
  \dom f_{a} & := \{ (a,\xx) \mid \xx \in \dom f \}, \\
  f_{a}(a,\xx) & := f(\xx) \text{ for all } \xx \in \dom f.
\end{align*}

\begin{lemma} \label{lemma:constants}
  Let $C \subseteq \OA$ be a clone with $c_a \in C$, and $X \in \intervalStr{C}$. 
  Then $f \in X$ if and only if $f_a \in X$. 
\end{lemma}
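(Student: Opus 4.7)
The plan is to prove the two directions separately, noting that the forward direction uses the strongness (restriction closure) of $X$ together with projection composition, while the reverse direction uses composition with the constant $c_a$ which lies in $X$ since $c_a \in C = X \cap \OA$. No Galois machinery or relational arguments are needed; this is a direct clone-theoretic calculation.

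For the direction $f \in X \Rightarrow f_a \in X$, I would first manufacture an auxiliary $(n+1)$-ary partial function $\tilde f$ with a fictitious leading variable. Concretely, set $\tilde f := f(e_2^{n+1}, e_3^{n+1}, \dots, e_{n+1}^{n+1})$, which lies in $X$ because $X$ is a partial clone containing the projections. A quick check of the definition of composition gives $\dom \tilde f = A \times \dom f$ and $\tilde f(x_0, x_1, \dots, x_n) = f(x_1, \dots, x_n)$. Now $f_a$ is by construction a restriction of $\tilde f$: its domain $\{(a, \xx) \mid \xx \in \dom f\}$ is contained in $A \times \dom f$ and the values agree. Since $X$ is strong, $f_a \leq \tilde f \in X$ forces $f_a \in X$.

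For the direction $f_a \in X \Rightarrow f \in X$, I would use the fact that $c_a^n \in C \subseteq X$ (the clone $C$ contains the constant $c_a$ in every arity) to substitute $a$ into the first coordinate of $f_a$. Define $g := f_a(c_a^n, e_1^n, e_2^n, \dots, e_n^n)$; this lies in $X$ as a composition of elements of $X$. By the composition formula,
\[
\dom g = \{\xx \in A^n \mid (c_a^n(\xx), x_1, \dots, x_n) \in \dom f_a\} = \{\xx \in A^n \mid (a, \xx) \in \dom f_a\} = \dom f,
\]
and for every $\xx \in \dom g$ we have $g(\xx) = f_a(a, x_1, \dots, x_n) = f(\xx)$. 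Hence $g = f$, so $f \in X$.

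There is no real obstacle here; the only point requiring care is the bookkeeping of domains under composition, in particular verifying that the domain of $g$ coincides exactly with $\dom f$ rather than being a proper subset or superset. Once the two one-line composition identities and the restriction $f_a \leq \tilde f$ are observed, the lemma follows immediately from the assumptions that $X$ is a strong partial clone containing $c_a$.
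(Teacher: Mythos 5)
Your proof is correct and follows essentially the same route as the paper: the forward direction adds a fictitious leading variable (your $\tilde f$ is the paper's $\nabla f$) and invokes strongness via $f_a \leq \tilde f$, and the reverse direction substitutes $c_a$ into the first slot (the paper writes this as $\Delta(f_a \star c_a)$, you write it as $f_a(c_a^n, e_1^n,\dots,e_n^n)$ — the same computation). The only difference is notational, with your version spelling out the domain bookkeeping that the paper leaves implicit.
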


\begin{proof}
  Assume $f \in X$. Then $f_a \leq \nabla f \in X = \Str{X}$, and thus $f \in X$.

  Now assume that $f_a \in X$. Additionally, we have $c_a \in C \subseteq X$. Thus 
  $f = \Delta (f_a \star c_a) \in X$.
\end{proof}

\begin{lemma} \label{lemma:noconstant}
  Let $D \subseteq \OA$ be a clone with $c_a \notin D$. Then there is some $\rho \in \Inv D$
  with $(a,\dots,a) \notin \rho$.
\end{lemma}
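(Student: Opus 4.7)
The plan is to invoke the classical Pol--Inv Galois correspondence for total clones on the finite set $A$: for every clone $D \subseteq \OA$ one has the identity $D = \Pol(\Inv D)$ (Bodnarchuk--Kaluzhnin--Kotov--Romov / Geiger). Since the constant function $c_a$ is by assumption \emph{not} in $D$, this equality immediately produces some relation $\rho \in \Inv D$ which $c_a$ fails to preserve. This $\rho$ will be our candidate.

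It then remains to translate non-preservation of $\rho$ by a constant function into the desired combinatorial statement. For any $\sigma \in \RelsA[h]$ and any arity $n$, applying $c_a^n$ to an $h \times n$ matrix whose rows lie in $\dom c_a^n = A^n$ yields the single tuple $(a, \dots, a)$. Hence $c_a^n$ preserves $\sigma$ if and only if either $\sigma$ is empty, or the constant tuple $(a,\dots,a)$ of length $h$ belongs to $\sigma$. Since our $\rho$ is not preserved, it must be non-empty and satisfy $(a,\dots,a) \notin \rho$, which is exactly what the lemma claims.

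There is essentially no obstacle here beyond citing the Galois correspondence: the argument is a one-line consequence once we write out what preservation of a relation by a constant function means. A minor cosmetic point is that the $c_a^n$ of various arities all behave the same way with respect to preservation, so we may freely use whichever arity is most convenient (for instance the unary one) when extracting $\rho$.
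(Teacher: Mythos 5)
Your proof is correct and is essentially the paper's own argument: the paper proves the statement by contradiction --- if every $\rho \in \Inv D$ contained $(a,\dots,a)$, then $c_a$ would preserve every relation in $\Inv D$ and hence lie in $D = \Pol (\Inv D)$ --- which is the same appeal to the Galois correspondence combined with the observation that $c_a$ preserves $\rho$ iff $(a,\dots,a) \in \rho$ or $\rho$ is empty. Your explicit remark that the extracted $\rho$ is necessarily non-empty is a harmless (and in fact useful) refinement, but there is no substantive difference in approach.
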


\begin{proof}
  Assume to the contrary, that $(a,\dots,a) \in \rho$ for all $\rho \in \Inv D$. Then $c_a \in \Pol \rho$
  for all $\rho \in \Inv D$, and thus $c_a \in D$. Contradiction.
\end{proof}

\begin{lemma} \label{lemma:diffconstants}
  Let $C,D \subseteq \OA$ be clones with $c_a \in C \setminus D$ and $D \subseteq C$.
  Then $|\intervalStr{D}| \geq |\intervalStr{C}|$.
\end{lemma}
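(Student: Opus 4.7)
The plan is to apply Lemma~\ref{lemma:strongdownwards} with $I := \intervalStr{C}$ and with $T$ a strong partial class chosen to ``see'' the constant $c_a$. The natural choice is $T := \cPol(\rho,\emptyset)$, where $\rho$ is a relation supplied by Lemma~\ref{lemma:noconstant}.

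First I would extract from Lemma~\ref{lemma:noconstant} a relation $\rho \in \Inv D$ with $(a,\dots,a) \notin \rho$. Since $c_a \notin D = \Pol \Inv D$, such a $\rho$ must actually be nonempty (the witness that $c_a \notin \Pol \rho$ is a matrix with columns in $\rho$). Set $T := \cPol(\rho,\emptyset)$; by Lemma~\ref{lemma:cPolqIsClass} it is a strong partial class, and by Lemma~\ref{lemma:emptyConsequentNoTotalFunction} it contains no total function, so condition~\ref{enum:strongdownwards:totalempty} of Lemma~\ref{lemma:strongdownwards} holds trivially. For condition~\ref{enum:strongdownwards:QiHungry} note that $D \subseteq \Pol \rho \subseteq \pPol \rho$, and since preservation of a relation is preserved under taking restrictions, $\Str{D} \subseteq \pPol \rho$. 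Corollary~\ref{corollary:emptysetIsVeryHungry} then gives $T \star \Str{D} \subseteq T$ and $\Str{D} \star T \subseteq T$, which is even stronger than what~\ref{enum:strongdownwards:QiHungry} demands.

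The crux is condition~\ref{enum:strongdownwards:different}: that distinct $X, Y \in \intervalStr{C}$ differ already on $T$. The key observation is that for \emph{any} partial $f$, the padded function $f_a$ lies in $T$. Indeed, every row of $\dom f_a$ has first coordinate $a$, so a matrix with rows in $\dom f_a$ and columns in $\rho$ would force the first column to be $(a,\dots,a)$, contradicting $(a,\dots,a) \notin \rho$; hence no such matrix exists and $f_a \in \cPol(\rho,\emptyset) = T$. Now suppose $X \cap T = Y \cap T$ for some $X, Y \in \intervalStr{C}$ and let $f \in X$. Because $c_a \in C \subseteq X$, Lemma~\ref{lemma:constants} gives $f_a \in X$, hence $f_a \in X \cap T = Y \cap T \subseteq Y$, and applying Lemma~\ref{lemma:constants} once more (using $c_a \in C \subseteq Y$) yields $f \in Y$. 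By symmetry $X = Y$, verifying~\ref{enum:strongdownwards:different}.

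With all three hypotheses of Lemma~\ref{lemma:strongdownwards} in hand, the conclusion $|\intervalStr{D}| \geq |\intervalStr{C}|$ is immediate. The main obstacle is locating the right $T$: the idea is that $\cPol(\rho,\emptyset)$ for a $\rho$ separating $c_a$ from $D$ is simultaneously ``forbidden'' enough to have empty total part (so it contributes no total functions to $\Str{D}$) and ``absorbing'' enough under $\star$ with $\Str{D}$, while still being large enough to recover every $f \in X$ from its padded version $f_a \in X \cap T$.
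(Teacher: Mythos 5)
Your proposal is correct and follows essentially the same route as the paper: take $\rho$ from Lemma~\ref{lemma:noconstant}, set $T := \cPol(\rho,\emptyset)$, verify the three hypotheses of Lemma~\ref{lemma:strongdownwards} via Corollary~\ref{corollary:emptysetIsVeryHungry}, and use that $f_a \in T$ together with Lemma~\ref{lemma:constants} to get condition~\ref{enum:strongdownwards:different}. Your explicit remark that $\rho$ can be taken nonempty (so that $T \cap \OA = \emptyset$ genuinely holds) is a small point the paper glosses over, but otherwise the arguments coincide.
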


\begin{proof}
  By Lemma \ref{lemma:noconstant} there is some relation $\rho$ with $(a,\dots,a) \notin \rho$ and
  $D \subseteq \Pol \rho$. Let $T := \cPol(\rho,\emptyset)$, and $I := \intervalStr{C}$. 
  We want to use Lemma \ref{lemma:strongdownwards}.
  
  Since $T \cap \OA = \emptyset \subseteq D$ we have condition \ref{enum:strongdownwards:totalempty}, and
  by Corollary \ref{corollary:emptysetIsVeryHungry} we have condition \ref{enum:strongdownwards:QiHungry}.

  Now we want to show condition \ref{enum:strongdownwards:different}.
  Now let $X,Y \in \intervalStr{C}$ with $X \neq Y$; w.l.o.g. there is some $f \in X \setminus Y$.
  By Lemma \ref{lemma:constants} we have $f_a \in X \setminus Y$. We just need to show that $f_a \in T$.
  
  Assume to the contrary that $f_a \notin T$. Let $f_a$ be $n$-ary, and $\rho$ be $h$-ary. 
  Then there is a matrix $M$ such that
  \begin{itemize}
    \item its row $\xx_1,\xx_2,\dots,\xx_h \in \dom f_a$, and
    \item its columns $\yy_1,\dots,\yy_n \in \rho$.
  \end{itemize}
  By the definition of $f_a$ and choice of $\rho$ we see that $\yy_1 = (a,\dots,a) \notin \rho$.
  This is a contradiction. Thus $f_a \in T$, and consequently $X \cap T \neq Y \cap T$.

  Therefore all conditions of Lemma \ref{lemma:strongdownwards} are fulfilled, and we get 
  $|\intervalStr{D}| \geq |\intervalStr{C}|$.
\end{proof}

This lemma can be applied to the main results of Theorems 10 and 19 in \cite{Haddad2007257}.
Let $\CBA$ be the set of all $h$-universal relations ($3 \leq h \leq |A|-1$), and
let $\CLA$ be the set of all prime affine relations on $A$. Then for each $\rho \in \CBA \cup \CLA$
the following properties hold
\begin{itemize}
  \item $\Pol \rho$ is a maximal clone of $\OA$,
  \item $c_a \in \Pol \rho$ for all $a \in A$,
  \item $\intervalStr{\Pol \rho}$ has the cardinality of the continuum.
\end{itemize}
With Lemma \ref{lemma:diffconstants} we obtain the following statement.
\begin{theorem}
  Let $D \subseteq \OA$ a clone with $D \subseteq \Pol \rho$ for some $\rho \in \CBA \cup \CLA$,
  and $c_a \notin D$ for some $a \in A$. Then $\intervalStr{D}$ has the cardinality of the continuum.
\end{theorem}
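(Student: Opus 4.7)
The plan is to apply Lemma \ref{lemma:diffconstants} directly, with $C := \Pol \rho$ playing the role of the larger clone. All the nontrivial content of the theorem has already been packaged into that lemma, so the proof should reduce to verifying its hypotheses and then invoking it, together with the known cardinality of $\intervalStr{\Pol \rho}$.

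First I would check the hypotheses of Lemma \ref{lemma:diffconstants} for the pair $(C,D) = (\Pol \rho, D)$. The inclusion $D \subseteq C$ is the assumption $D \subseteq \Pol \rho$. The second listed property of the relations in $\CBA \cup \CLA$ states that $c_b \in \Pol \rho$ for every $b \in A$; specializing to the element $a \in A$ furnished by the hypothesis then gives $c_a \in \Pol \rho = C$, while $c_a \notin D$ is assumed. Hence $c_a \in C \setminus D$, and Lemma \ref{lemma:diffconstants} applies, yielding
\[
|\intervalStr{D}| \;\geq\; |\intervalStr{\Pol \rho}|.
\]

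Next I would invoke the third listed property: $\intervalStr{\Pol \rho}$ has the cardinality of the continuum (this is the substance extracted from Theorems~10 and~19 of \cite{Haddad2007257}). Combined with the previous inequality, this gives $|\intervalStr{D}| \geq 2^{\aleph_0}$. The matching upper bound is immediate: since $A$ is finite, $\PA$ is countable, so the power set of $\PA$—and in particular any set of partial clones on $A$—has cardinality at most $2^{\aleph_0}$. Therefore $|\intervalStr{D}| = 2^{\aleph_0}$.

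There is no real obstacle here, because the work is done by Lemma \ref{lemma:diffconstants}, whose proof in turn rests on Corollary \ref{corollary:emptysetIsVeryHungry} applied to the strong partial class $\cPol(\rho,\emptyset)$ associated with the relation pair with empty consequent. The only thing to verify is the witness $c_a \in \Pol \rho \setminus D$, which is immediate from the second listed property of $\CBA \cup \CLA$ together with the hypothesis $c_a \notin D$; everything else is mechanical.
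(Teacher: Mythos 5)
Your proposal is correct and matches the paper's argument exactly: the paper derives the theorem by applying Lemma \ref{lemma:diffconstants} with $C = \Pol \rho$, using the listed properties that $c_a \in \Pol \rho$ for all $a \in A$ and that $\intervalStr{\Pol \rho}$ has continuum cardinality. Your additional remark on the matching upper bound (countability of $\PA$) is a harmless, standard observation the paper leaves implicit.
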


\section{The subclones of $L$} \label{section:L}

In this section we use the results from \cite{Alekseev-Voronenko:1994} to show that the interval
$\intervalStr{D}$ has continuum cardinality for all clones $D \subseteq L$.

We need to define some functions first as given in \cite{Alekseev-Voronenko:1994}. 
Let $n(k,p) := (2k-1)p+1$, $k \geq 2$ and $p \geq 1$. 
Define the $n(k,p)$-ary partial function $\tau_p^k$ by 
\begin{align*}
  \dom \tau_p^k & := \{\1\} \cup \{ \xx \in \2^{n(k,p)} \mid \#_1 \xx \leq p \}, \\
  \tau_p^k(\xx) & := \begin{cases}
    1 & \text{if } \xx = \1, \\
    0 & \text{if } \xx \in \dom \tau_p^k \setminus \{\1\}.
  \end{cases}
\end{align*}

We define $p_j$ by $p_1 := 1$ and $p_j := n(j,p_{j-1})$ for all $j \geq 2$.
Set $\xi_j := \tau_{p_j}^{j+1}$ for all $j \geq 1$.

\begin{lemma}[Лемма 11 \cite{Alekseev-Voronenko:1994}] \label{lemma:xisIndependent}
  Let $j \geq 1$.

  Then $\xi_j \notin [\{\xi_1,\dots,\xi_{j-1},\xi_{j+1},\dots\} \cup \Str{L} ]$.
\end{lemma}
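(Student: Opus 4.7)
The plan is a proof by contradiction along the standard lines for separation results in partial clone theory. Suppose $\xi_j$ belongs to $\Gamma := [\{\xi_i : i \neq j\} \cup \Str{L}]$; among all derivations of $\xi_j$ in $\Gamma$ pick one of minimum size, and let its root composition be $\xi_j = f(g_1, \dots, g_n)$, with $f \in \{\xi_i : i \neq j\} \cup \Str{L}$ and each $g_k \in \Gamma$. I would then analyse the root $f$ in two cases.

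If $f$ is a restriction of an affine function, say $f(y_1,\dots,y_n) = y_{k_1} \oplus \cdots \oplus y_{k_r} \oplus c$, then on $\dom \xi_j$ one has $\xi_j(\xx) = g_{k_1}(\xx) \oplus \cdots \oplus g_{k_r}(\xx) \oplus c$. The key observation is that $\0$ and every weight-one tuple lie in $\dom \xi_j$ (their Hamming weights being at most $p_j$), while $\xi_j$ vanishes on all of these and takes value $1$ at $\1$. Any affine function on $\2^m$, where $m := n(j+1,p_j)$, that vanishes on $\0$ and on every weight-one tuple is identically zero; yet $\1$ is the coordinate-wise XOR of the weight-one tuples, producing a parity obstruction. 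This forces at least one $g_{k_\ell}$ to be nonaffine on $\dom \xi_j$, so by absorbing the outer affine dressing into a single $g_{k_\ell}$ one reduces to the case in which the outermost operation of the minimal term is some $\xi_i$ with $i \neq j$.

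If $f = \xi_i$ with $i \neq j$, set $N_i := n(i+1,p_i)$. Evaluation at $\1$ forces $g_k(\1) = 1$ for each $k \in [N_i]$; for any $\xx \in \dom \xi_j$ of weight at most $p_j$, the tuple $(g_1(\xx), \dots, g_{N_i}(\xx))$ must lie in $\dom \xi_i \setminus \{\1\}$, hence has weight at most $p_i$. For $i > j$ this constraint is too slack and a pigeonhole count produces some $g_k$ identically $1$ on $\dom \xi_j$; deleting the corresponding input of $\xi_i$ then gives a strictly shorter derivation of $\xi_j$, contradicting minimality. For $i < j$ the recursion $p_j = n(j, p_{j-1}) = (2j-1)p_{j-1} + 1$ is exactly calibrated so that on the weight-$\leq p_j$ slice of $\dom \xi_j$ one can identify a subassignment mimicking $\dom \xi_{j-1}$ inside the $g_k$'s, yielding an induction on $j$ with trivial base at $j = 1$.

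The principal obstacle is the subcase $i < j$: Alekseev and Voronenko tuned $p_j$ so that naive pigeonhole misses by exactly one, and the argument must carefully track the extra $+1$ in $n(k,p) = (2k-1)p + 1$. To make the induction go through smoothly, I would strengthen the hypothesis to assert that $\xi_j$ cannot be represented in any strong partial clone generated by $\Str{L}$ together with partial functions each of arity strictly smaller than $n(j+1, p_j)$; this stronger statement applies cleanly to the subterms $g_k$ once the affine case has been eliminated, and enables the inductive step.
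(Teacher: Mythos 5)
The paper offers no proof of this statement at all: it is quoted directly from \cite{Alekseev-Voronenko:1994} (its Lemma~11), so there is no internal argument to compare yours against, and your attempt must stand on its own. It has the right general shape --- minimal derivation, case split on whether the outermost operation is linear or some $\xi_i$, exploitation of the tuples $\0$, the weight-one tuples and $\1$ in $\dom \xi_j$ --- but the decisive steps are missing or wrong. The linear case is not disposed of: your parity observation applies to affine functions \emph{of the variables} $x_1,\dots,x_{p_{j+1}}$, whereas what must be excluded is $\xi_j = g_{k_1}\oplus\dots\oplus g_{k_r}\oplus c$ on $\dom\xi_j$ with the $g_{k_\ell}$ arbitrary (non-affine) members of the generated clone; ``absorbing the outer affine dressing into a single $g_{k_\ell}$'' is not a legal operation on a derivation tree and does not yield a derivation whose root is some $\xi_i$. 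Linear functions applied \emph{outside} the $\xi_i$'s are precisely where the difficulty of the lemma sits, and your sketch skips it.

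The inductive scaffolding also does not hold up. In a composition $\xi_i(g_1,\dots,g_{N_i})$ representing $\xi_j$, every $g_k$ has the same arity as $\xi_j$, namely $p_{j+1}=n(j+1,p_j)$, so a strengthened hypothesis about ``partial functions each of arity strictly smaller than $n(j+1,p_j)$'' cannot be applied to the subterms. Worse, that strengthened statement is false: binary conjunction has arity $2 < n(j+1,p_j)$, and together with $\Str{L}$ it generates a strong partial clone whose total part is all of $\OpBool$, hence (by restriction closure) all of $\ParBool$, which contains $\xi_j$. Finally, the pigeonhole claim in the case $i>j$ points the wrong way: the constraint that $(g_1(\xx),\dots,g_{N_i}(\xx))$ has weight at most $p_i$ for every low-weight $\xx$ forces most $g_k$ to vanish on most of the low-weight slice, not to be identically $1$, so the proposed ``delete an input'' reduction does not follow; and the $i<j$ case is only asserted, not argued. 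You would need to reconstruct the actual counting argument of Alekseev and Voronenko --- including the treatment of restrictions, since $\xi_j$ could arise as a proper subfunction of a composition --- or simply cite their Lemma~11, as the paper does.
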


As a consequence we get the following theorem.

\begin{theorem} \label{theorem:Lcontinuum}
  The interval $\intervalStr{L}$ has the cardinality of the continuum.
\end{theorem}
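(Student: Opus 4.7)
The approach is to exhibit $2^{\aleph_0}$ pairwise distinct members of $\intervalStr{L}$ indexed by subsets of a countably infinite set. For each subset $N$ of the positive integers, let $X_N$ denote the smallest strong partial clone containing $\Str{L} \cup \{\xi_j \mid j \in N\}$. Since $\PA$ is countable, the upper bound $|\intervalStr{L}| \leq 2^{\aleph_0}$ is automatic, so the entire task is to show that each $X_N$ lies in $\intervalStr{L}$ and that $X_N \neq X_{N'}$ whenever $N \neq N'$.

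For membership, $X_N$ is strong by construction and $L \subseteq X_N \cap \OA$ is immediate. For the reverse inclusion $X_N \cap \OA \subseteq L$, the key observation is that in every composition $f(g_1,\dots,g_n)$ the domain of the result is contained in $\bigcap_i \dom g_i$, so the composition is total only if every $g_i$ is already total. Iterating, every total function in the partial clone generated by $\Str{L} \cup \{\xi_j \mid j \in N\}$ is obtained from total generators alone, and these are precisely the elements of $\Str{L} \cap \OA = L$; since $L$ is itself a clone, such compositions stay inside $L$. Restricting a total function never yields another total function, so the subsequent strong closure cannot enlarge the total component. Hence $X_N \cap \OA = L$.

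For distinctness, suppose $N \neq N'$ and pick $j \in N \setminus N'$ (without loss of generality). Then $\xi_j \in X_N$, while $X_{N'}$ is contained in the strong partial clone generated by $\Str{L} \cup \{\xi_i \mid i \neq j\}$, which by Lemma \ref{lemma:xisIndependent} does not contain $\xi_j$; consequently $\xi_j \notin X_{N'}$ and $X_N \neq X_{N'}$. The main obstacle is making sure Lemma \ref{lemma:xisIndependent} is applied with the correct notion of closure: if the operator $[\,\cdot\,]$ there denotes strong partial clone closure (as is standard in the Alekseev--Voronenko setting), the proof concludes directly; otherwise one would need a supplementary step ruling out that $\xi_j$ can be obtained as a restriction of some function in the plain partial clone generated by $\Str{L} \cup \{\xi_i \mid i \neq j\}$, exploiting the explicit shape of $\dom \xi_j$ and the value $\xi_j(\1) = 1$.
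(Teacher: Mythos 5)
Your construction of the family $X_N$ and your use of Lemma~\ref{lemma:xisIndependent} for distinctness coincide with the paper's proof, but your verification that $X_N \cap \OA \subseteq L$ has a genuine gap. From $\dom f(g_1,\dots,g_n) \subseteq \bigcap_i \dom g_i$ you correctly conclude that the \emph{inner} functions of a total composition must be total, but you then infer that every total member of the generated partial clone "is obtained from total generators alone." That does not follow: the \emph{outer} function $f$ need not be total; it suffices that the joint range of $(g_1,\dots,g_n)$ be contained in $\dom f$. For instance $\xi_j(c_0,\dots,c_0) = c_0$ is a total function produced by a composition whose outer function is the non-total generator $\xi_j$ (here the result happens to lie in $L$, but your argument gives no reason why every such composition must). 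The problem compounds under the strong closure, where restrictions of members of $X_N$ — never total, as you note — can nevertheless reappear as outer functions of later compositions that are again total. So the equality $X_N \cap \OA = L$ is not established.

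The paper sidesteps this entirely: it only observes that each $X_J$ is a strong partial clone containing $L$, i.e.\ $I \subseteq \intervalStrUp{L}$, and then invokes the maximality of $L$ in Post's lattice — so $X_J \cap \OpBool$ is either $L$ or $\OpBool$ — together with $|\intervalStr{\OpBool}| = 1$ from Theorem~\ref{theorem:finiteIntervals}. Hence at most one of the continuum many distinct $X_J$ can fail to lie in $\intervalStr{L}$, which suffices. If you wish to keep your direct computation of the total part, you would need to analyze which total functions can arise as $\xi_j(g_1,\dots,g_n)$ with total $g_i$ whose joint range lies in $\dom \xi_j$ (and likewise with restrictions interposed); that is precisely the work the maximality argument lets you avoid.
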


\begin{proof}
  Let $X_J := [\{\xi_j \mid j \in J\} \cup \Str{L}]$ for every $J \subseteq \IN \setminus \{0\}$.
  By Lemma~\ref{lemma:xisIndependent} we see that $X_J \neq X_{J'}$ if $J \neq J'$, and thus
  the set $I := \{ X_J \mid J \subseteq \IN \setminus \{0\} \}$ has the cardinality of the continuum.
  Furthermore, $I \subseteq \intervalStrUp{L}$. Since $L$ is a maximal clone and $|\intervalStr{\OpBool}| = 1$,
  we conclude that $\intervalStr{L}$ has the cardinality of the continuum.
\end{proof}

\begin{lemma} \label{lemma:LT0LT1LScontinuum}
  Let $D \subseteq L$ be a clone with $C \subseteq D$ with $D \in \{ T_0, T_1, S \}$.
  Then $\intervalStr{D}$ has the cardinality of the continuum.
\end{lemma}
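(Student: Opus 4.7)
The plan is to deduce this lemma directly from Theorem~\ref{theorem:Lcontinuum} by an application of Lemma~\ref{lemma:diffconstants}. The key observation is that every clone $D$ in the hypothesis is a subclone of $L$ but necessarily omits at least one of the Boolean constants $c_0, c_1$, whereas $L$ itself contains both (both $c_0$ and $c_1$ are linear functions).

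First I would identify, in each of the three cases, a constant that lies in $L$ but not in $D$. For $D \subseteq T_0 \cap L$ the constant $c_1$ is missing from $D$ (since $c_1 \notin T_0$) but lies in $L$. For $D \subseteq T_1 \cap L$ the constant $c_0$ is missing from $D$ but lies in $L$. For $D \subseteq S \cap L$ neither constant is self-dual, so both $c_0, c_1$ are missing from $D$; pick $c_0$. In every case we obtain some $a \in \{0,1\}$ with $c_a \in L \setminus D$.

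Next I would invoke Lemma~\ref{lemma:diffconstants} with the outer clone taken to be $L$ and the inner clone taken to be the given $D$: since $D \subseteq L$ and $c_a \in L \setminus D$, the lemma yields
\[
|\intervalStr{D}| \geq |\intervalStr{L}|.
\]
Combining this with Theorem~\ref{theorem:Lcontinuum}, which asserts that $\intervalStr{L}$ has the cardinality of the continuum, immediately gives that $\intervalStr{D}$ has the cardinality of the continuum as well.

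There is essentially no obstacle to overcome here; the heavy lifting is absorbed in the two results being invoked. All the delicate work is contained in Theorem~\ref{theorem:Lcontinuum}, which relies on the independence of the family $\{\xi_j\}_{j \geq 1}$ from Lemma~\ref{lemma:xisIndependent} (due to Alekseev--Voronenko), and in Lemma~\ref{lemma:diffconstants}, which carries cardinality bounds downward through the strong partial class $\cPol(\{a\},\emptyset)$ consisting of partial functions undefined on $(a,\dots,a)$. The only point to verify carefully is the correct choice of missing constant in each of the three sub-cases, which the paragraph above handles.
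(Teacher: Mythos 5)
Your proposal is correct and follows exactly the paper's own argument: the paper likewise observes that $c_0,c_1 \in L$ while $c_1 \notin T_0$, $c_0 \notin T_1$, $c_0 \notin S$, and then applies Lemma~\ref{lemma:diffconstants} with $C = L$ together with Theorem~\ref{theorem:Lcontinuum}. No differences worth noting.
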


\begin{proof}
  We have $c_0,c_1 \in L$, and $c_1 \notin T_0$, $c_0 \notin T_1$, $c_0 \notin S$. Thus
  Lemma \ref{lemma:diffconstants} is applicable with $C = L$, and by \ref{theorem:Lcontinuum}
  follows that $\intervalStr{D}$ has the cardinality of the continuum.
\end{proof}

\subsection{The remaining two subclones of $L$}

The only two subclones of $L$ not covered yet are $C_{01} := [c_0,c_1]$ and $\Omega_1 := [\OpBool[1]]$.
Let $\rho_C$, $\rho_1$ and $\rho_L$ be three 4-ary relations defined as
\[ \arraycolsep=2.0pt
  \rho_C := 
  \begin{pmatrix} 
    0 & 0 & 0 & 1 \\ 
    0 & 0 & 1 & 1 \\ 
    0 & 1 & 0 & 1 \\ 
    0 & 1 & 1 & 1 
  \end{pmatrix} \quad
  \rho_1 := 
  \begin{pmatrix} 
    0 & 0 & 0 & 1 & 1 & 1 \\ 
    0 & 0 & 1 & 0 & 1 & 1 \\ 
    0 & 1 & 0 & 1 & 0 & 1 \\ 
    0 & 1 & 1 & 0 & 0 & 1 
  \end{pmatrix} \quad
  \rho_L := 
  \begin{pmatrix} 
    0 & 0 & 0 & 0 & 1 & 1 & 1 & 1 \\ 
    0 & 0 & 1 & 1 & 0 & 0 & 1 & 1 \\ 
    0 & 1 & 0 & 1 & 0 & 1 & 0 & 1 \\ 
    0 & 1 & 1 & 0 & 1 & 0 & 0 & 1 
  \end{pmatrix}
\]
Although the fact that $\rho_C \subseteq \rho_1 \subseteq \rho_L$ holds, is not used directly,
the similar structure makes the proof of Lemma~\ref{lemma:xipreserve} a bit easier.

As shown by Blochina in \cite{Blochina:1970:Posta} (see also Section~10.2~\cite{Lau:2006})
the relations $\rho_C$, $\rho_1$ and $\rho_L$ characterize the clones $C_{01}$, $\Omega_1$, and $L$,
respectively. That means the following equalities hold:
\begin{align*}
  C_{01} & = \Pol \rho_C, \\ 
  \Omega_1 & = \Pol \rho_1, \\
  L & = \Pol \rho_L.
\end{align*}

\begin{lemma} \label{lemma:xipreserve}
  Let $j \geq 1$. 
  
  Then $\xi_j \in \pPol \rho_1$ and $\xi_j \in \pPol \rho_C$.
\end{lemma}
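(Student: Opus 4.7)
The plan is to check preservation directly from the definitions of $\xi_j$, $\rho_1$, and $\rho_C$. Fix $j \geq 1$, set $n := n(j+1,p_j) = (2j+1)p_j + 1$, and pick $\rho \in \{\rho_1, \rho_C\}$. I must show that for every $4 \times n$ matrix $M$ whose rows $r_1, r_2, r_3, r_4$ lie in $\dom \xi_j$ and whose columns lie in $\rho$, the tuple $\xi_j(M)$ is in $\rho$. The key observation is that $\xi_j$ outputs $1$ only on the single input $\1$ (and outputs $0$ on every other tuple in its domain), so if we set
\[
T := \{ i \in [4] \mid r_i = \1 \},
\]
then $\xi_j(M) \in \2^4$ is the characteristic tuple $\chi_T$ of $T$ (the tuple whose $i$th entry is $1$ iff $i \in T$). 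The whole problem then reduces to ruling out every $T \subseteq [4]$ with $\chi_T \notin \rho$.

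For such a bad $T$, every column $c$ of $M$ must satisfy $c_i = 1$ for all $i \in T$, so every column lies in $\rho[T] := \{c \in \rho \mid c_i = 1 \text{ for all } i \in T\}$. Most bad $T$'s fall into a uniform \emph{rigid} case $\rho[T] = \{\1\}$: then every column of $M$ is $\1$, every row is $\1$, so $T = [4]$, contradicting $T \neq [4]$. A direct inspection of $\rho_1$ and $\rho_C$ shows that the rigid case handles all three-element subsets of $[4]$ for both relations, the pairs $\{1,4\}$ and $\{2,3\}$ for $\rho_1$, and the sets $\{1\}, \{1,2\}, \{1,3\}, \{1,4\}, \{2,3\}$ for $\rho_C$. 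A small variant disposes of $T \in \{\{2\}, \{3\}\}$ for $\rho_C$: every $c \in \rho_C[T]$ also has $c_4 = 1$, so $r_4 = \1$, contradicting $4 \notin T$.

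The remaining bad cases are the singletons $\{1\}, \{2\}, \{3\}, \{4\}$ for $\rho_1$ and $\{4\}$ for $\rho_C$. In each, $\rho[T]$ consists of $\1$ together with exactly two other tuples; let $a, b, c$ denote the multiplicities in $M$ of these three tuples, with $c$ the multiplicity of $\1$, so $a + b + c = n$. A short verification (identical across all five cases) shows that the three rows of $M$ indexed by $[4] \setminus T$ have Hamming weights $a+c$, $b+c$, and $c$ in some order. Each such row lies in $\dom \xi_j$ but is not $\1$, so each weight is at most $p_j$. Summing gives $a + b + 3c \leq 3p_j$; substituting $a + b = n - c$ yields
\[
n + 2c \leq 3p_j, \qquad \text{i.e.,} \qquad 2c \leq (2 - 2j)p_j - 1,
\]
which is negative for every $j \geq 1$, a contradiction.

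The one step that really has to be done with care is this final singleton counting: it is where the precise form of the arity $n = (2j+1)p_j + 1$ is used, the added $+1$ being exactly what makes the last inequality fail. Everything else is a compact inspection of the six-tuple relation $\rho_1$ and the four-tuple relation $\rho_C$, where the catalogue of $\rho[T]$ for each $T \subseteq [4]$ is tiny and can be read off by hand.
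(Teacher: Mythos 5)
Your proof is correct, but it takes a genuinely more case-analytic route than the paper's. The paper works with $K := \{ k \in [4] \mid \xx_k \neq \1 \}$ (the complement of your $T$) and runs a single uniform pigeonhole argument: since $\0,\1 \in \rho$ force the output tuple $\zz$ to lie outside $\{\0,\1\}$, one has $1 \leq |K| \leq 3$; each row indexed by $K$ carries at most $p_j$ ones while the arity is $p_{j+1} \geq 3p_j+1$, so some column is $0$ in every coordinate of $K$ and $1$ elsewhere --- but that column is exactly $\zz$, which was assumed not to be in $\rho$, a contradiction. This argument never inspects the internal structure of $\rho_1$ or $\rho_C$; it works verbatim for any relation containing $\0$ and $\1$, which is why the paper handles both relations in one stroke. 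Your version instead catalogues $\rho[T]$ for every bad $T$, dispatches most cases by the rigidity observation $\rho[T]=\{\1\}$ (plus the $c_4=1$ variant for $\rho_C$), and reserves counting for the five singleton cases, where your inequality $a+b+3c \leq 3p_j$ against $a+b+c = n > 3p_j$ is a repackaging of the same pigeonhole fact. What your approach buys is that the rigid cases need no counting at all and make visible exactly which tuples of $\rho$ are compatible with each pattern of all-ones rows; what it costs is the by-hand inspection of both relations, which the paper's single counting step renders unnecessary. All of your case checks are accurate, so this is a matter of economy rather than correctness.
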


\begin{proof}
  Let $\rho \in \{ \rho_1, \rho_C \}$.
  Assume to the contrary, that $\xi_j$ does not preserve $\rho$.

  Then there is a matrix $M$ such that
  \begin{itemize}
    \item its rows $\xx_1,\xx_2,\xx_3,\xx_4 \in \dom \xi_j$,
    \item its columns $\yy_1,\dots,\yy_{p_{j+1}} \in \rho$, and
    \item $\zz := (z_1,z_2,z_3,z_4) := (\xi_j(\xx_1),\xi_j(\xx_2),\xi_j(\xx_3),\xi_j(\xx_4)) \notin \rho$.
  \end{itemize}
  We will show that $\zz = \yy_l$ for some $l \in [p_{j+1}]$. 

  Let $K := \{ k \in [4] \mid \xx_k \neq \1 \}$. Clearly, $z_k = 0$ iff $k \in K$.
  Since $\0,\1 \in \rho$, we have that $\zz \notin \{\0,\1\}$ and 
  thus there are $i,i' \in [4]$ with $\xx_i = \1$ and $\xx_{i'} \neq \1$. 
  This implies $1 \leq |K| \leq 3$.

  By the construction of $\xi_j$ each row $\xx_k$ for $k \in K$ has at most $p_j$-many $1$'s. 
  But $\xi_j$ has an arity of
  \[ p_{j+1} = (2(j+1)-1)p_j + 1 = (2j+1)p_j+1 \geq 3p_j + 1. \]
  Thus there is some column $\yy_l$ with
  $(\yy_l)_k = 0$ for all $k \in K$. Furthermore, $(\yy_l)_{k'} = 1$ for all $k' \in [4] \setminus K$.
  Thus $\yy_l = \zz$. But this contradicts $\yy_l \in \rho$ and $\zz \notin \rho$.

  Therefore $\xi_j \in \pPol \rho$.
\end{proof}

\begin{lemma} \label{lemma:C01continuum}
  $\intervalStr{C_{01}}$ has the cardinality of the continuum.
\end{lemma}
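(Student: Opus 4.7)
The plan is to mimic the proof of Theorem~\ref{theorem:Lcontinuum} but now land inside $\intervalStr{C_{01}}$ rather than $\intervalStrUp{L}$. For each $J \subseteq \IN \setminus \{0\}$ I would set
\[
X_J := [\{\xi_j \mid j \in J\} \cup \Str{C_{01}}],
\]
where $[\cdot]$ is the strong-partial-clone closure, and then show (i) $X_J \in \intervalStr{C_{01}}$ for every $J$, and (ii) $X_J \neq X_{J'}$ whenever $J \neq J'$. Setting $I := \{X_J \mid J \subseteq \IN \setminus \{0\}\}$ then yields $|\intervalStr{C_{01}}| \geq |I| = 2^{\aleph_0}$.

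For (i), the inclusion $C_{01} \subseteq X_J \cap \OA$ is immediate because $c_0, c_1 \in \Str{C_{01}} \subseteq X_J$ and $X_J$ contains the projections and is composition-closed, hence contains the clone they generate. For the reverse inclusion I would use the relational characterisation $C_{01} = \Pol \rho_C$: by Lemma~\ref{lemma:xipreserve} we have $\xi_j \in \pPol \rho_C$ for every $j \geq 1$, and trivially $\Str{C_{01}} \subseteq \pPol \rho_C$, so all generators of $X_J$ lie inside the strong partial clone $\pPol \rho_C$, giving $X_J \subseteq \pPol \rho_C$ and hence $X_J \cap \OA \subseteq \Pol \rho_C = C_{01}$.

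For (ii), given $J \neq J'$ I would pick $j \in J \triangle J'$, say $j \in J \setminus J'$; then $\xi_j \in X_J$ by construction, and I need $\xi_j \notin X_{J'}$. Because $C_{01} \subseteq L$ we have $\Str{C_{01}} \subseteq \Str{L}$, so
\[
X_{J'} \subseteq [\{\xi_k \mid k \in J'\} \cup \Str{L}] \subseteq [\{\xi_k \mid k \in \IN \setminus \{0,j\}\} \cup \Str{L}],
\]
and Lemma~\ref{lemma:xisIndependent} says $\xi_j$ does not belong to the right-hand set. This contradicts $\xi_j \in X_{J'}$ and establishes $X_J \neq X_{J'}$.

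The only real content is the use of Lemma~\ref{lemma:xipreserve}, which guarantees that the Alekseev–Voronenko functions $\xi_j$, originally designed to witness continuum cardinality above $L$, already preserve the stricter invariant $\rho_C$ and therefore sit in the smaller interval above $C_{01}$. No obstruction arises from the independence step, since independence over $\Str{L}$ trivially implies independence over $\Str{C_{01}}$; I do not anticipate needing to apply Lemma~\ref{lemma:diffconstants} here, precisely because the constants $c_0, c_1$ already belong to $C_{01}$ and so Lemma~\ref{lemma:LT0LT1LScontinuum} is unavailable for this case.
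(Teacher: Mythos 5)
Your proof is correct, and it rests on the same two pillars as the paper's argument: Lemma~\ref{lemma:xipreserve} (the Alekseev--Voronenko functions $\xi_j$ preserve $\rho_C$) and Lemma~\ref{lemma:xisIndependent} (their independence over $\Str{L}$). The packaging, however, is genuinely different. The paper keeps the family $X_J = [\{\xi_j \mid j\in J\}\cup\Str{L}]$ from the proof of Theorem~\ref{theorem:Lcontinuum} and pushes it down into $\intervalStr{C_{01}}$ via the transfer machinery of Lemma~\ref{lemma:strongdownwards}, taking $T := \pPol \rho_C$ and replacing each $X_J$ by $\Str{C_{01}}\cup(X_J\cap T)$; the hypotheses of that lemma are supplied precisely by $T\cap\OpBool=\Pol\rho_C=C_{01}$, by $\Str{C_{01}}\subseteq T$, and by Lemma~\ref{lemma:xipreserve}. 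You instead generate the witnesses directly as $X_J := [\{\xi_j\mid j\in J\}\cup\Str{C_{01}}]$, which makes strongness and closure under composition automatic but obliges you to bound the total part from above; your observation that every generator lies in the strong partial clone $\pPol\rho_C$, whence $X_J\cap\OpBool\subseteq\Pol\rho_C=C_{01}$, does exactly that, and the separation $X_J\neq X_{J'}$ follows from Lemma~\ref{lemma:xisIndependent} by monotonicity of the closure, since $\Str{C_{01}}\subseteq\Str{L}$. Your route is the more self-contained one for this particular lemma; the paper's route buys uniformity, since Lemma~\ref{lemma:strongdownwards} is reused in settings (e.g.\ the subclones of $T_{0,2}$, where $T=\cPol(\{0\},\emptyset)$ has empty total part) in which a direct generation argument would not control the total component so cleanly. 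One cosmetic point: in your step (ii) there is nothing to ``contradict''---you have simply exhibited $\xi_j\in X_J\setminus X_{J'}$, which already gives $X_J\neq X_{J'}$.
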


\begin{proof}
  Let $X_J := [\{\xi_j \mid j \in J\} \cup \Str{L}]$ for every $J \subseteq \IN \setminus \{0\}$.
  Then the set $I := \{ X_J \mid J \subseteq \IN \setminus \{0\} \}$ has the cardinality of the continuum.

  Let $T := \pPol \rho_C$.
  \begin{itemize}
    \item $T \cap \OpBool = \Pol \rho_C = C_{01}$.
    \item As $\Str{C_{01}} \subseteq T$ we have 
      $T \star \Str{C_{01}} \subseteq T$, and  $\Str{C_{01}} \star T \subseteq T$.
    \item By Lemma~\ref{lemma:xipreserve} and the definition of $I$, we get
      $X \cap T \neq Y \cap T$ for all $X,Y \in I$ with $X \neq Y$.
  \end{itemize}

  Then we apply Lemma~\ref{lemma:strongdownwards}, and yield the result.
\end{proof}

By setting $T = \pPol \rho_1$ in the previous proof we obtain the proof for the following lemma.
\begin{lemma} \label{lemma:Omega1continuum}
  $\intervalStr{\Omega_1}$ has the cardinality of the continuum.
\end{lemma}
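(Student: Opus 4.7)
The plan is to follow almost verbatim the proof of Lemma~\ref{lemma:C01continuum}, replacing $\rho_C$ by $\rho_1$ and $C_{01}$ by $\Omega_1$, as already hinted in the sentence preceding the statement. Concretely, I would reuse the family $X_J := [\{\xi_j \mid j \in J\} \cup \Str{L}]$ indexed by $J \subseteq \IN \setminus \{0\}$, and let $I := \{X_J \mid J \subseteq \IN \setminus \{0\}\}$. The argument from Theorem~\ref{theorem:Lcontinuum} shows that $I$ is a subset of $\intervalStr{L}$, after possibly discarding a single exceptional element whose total part might be $\OpBool$, and that $I$ has the cardinality of the continuum.

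I would then set $T := \pPol \rho_1$ and verify the three hypotheses of Lemma~\ref{lemma:strongdownwards} with $C := L$ and $D := \Omega_1$. Condition~(i) holds because $T \cap \OpBool = \Pol \rho_1 = \Omega_1$. For condition~(ii), I would observe that $\Omega_1 \subseteq \pPol \rho_1$ together with the strongness of $\pPol \rho_1$ gives $\Str{\Omega_1} \subseteq T$; since $T$ is itself a partial clone and hence closed under $\star$, both $T \star \Str{\Omega_1} \subseteq T$ and $\Str{\Omega_1} \star T \subseteq T$ follow. Condition~(iii) is the only place where genuine input is required, and it is already supplied by Lemma~\ref{lemma:xipreserve}: each $\xi_j$ lies in $\pPol \rho_1 = T$, so by Lemma~\ref{lemma:xisIndependent} distinct $J \neq J'$ are separated by some $\xi_j$ lying in exactly one of $X_J \cap T$ and $X_{J'} \cap T$.

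Applying Lemma~\ref{lemma:strongdownwards} then yields $|\intervalStr{\Omega_1}| \geq |I|$, and since $|I|$ has the cardinality of the continuum, so does $|\intervalStr{\Omega_1}|$. The main (and essentially only) obstacle would be the sanity check on condition~(iii): one needs Lemma~\ref{lemma:xipreserve} to apply to $\rho_1$ specifically, which it does explicitly, so the proof genuinely reduces to the template substitution indicated by the authors, with no new structural work required.
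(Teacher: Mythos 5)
Your proposal is correct and coincides with the paper's own proof, which consists precisely of the remark that setting $T = \pPol \rho_1$ in the proof of Lemma~\ref{lemma:C01continuum} yields the result; the key input, that each $\xi_j$ preserves $\rho_1$, is indeed supplied by Lemma~\ref{lemma:xipreserve}.
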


Now we can conclude from Theorem~\ref{theorem:Lcontinuum}, and 
Lemmas~\ref{lemma:LT0LT1LScontinuum}, \ref{lemma:C01continuum}, 
\ref{lemma:Omega1continuum}, that the following theorem holds.

\begin{theorem} \label{theorem:subL}
  Let $D \subseteq L$. Then $\intervalStr{D}$ has the cardinality of the continuum.
\end{theorem}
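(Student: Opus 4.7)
The plan is to carry out a finite case analysis over the subclone lattice of $L$. Post's classification gives a complete list of these subclones, and each of them is handled by one of the previously established results: Theorem~\ref{theorem:Lcontinuum}, Lemma~\ref{lemma:LT0LT1LScontinuum}, Lemma~\ref{lemma:C01continuum}, or Lemma~\ref{lemma:Omega1continuum}. Nothing new needs to be constructed; the proof is essentially a collation.

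Explicitly, the subclones of $L$ are $L$ itself, the maximal proper subclones $L \cap T_0$, $L \cap T_1$, $L \cap S$, $\Omega_1$, and the further subclones $L \cap T_0 \cap T_1$, $C_{01}$, $[\neg]$, $[c_0]$, $[c_1]$, and the trivial clone of projections. I would split these into three buckets. First, if $D = L$, Theorem~\ref{theorem:Lcontinuum} applies directly. Second, if $D = \Omega_1$ or $D = C_{01}$, then Lemma~\ref{lemma:Omega1continuum} or Lemma~\ref{lemma:C01continuum} applies, respectively. Third, for every remaining $D$ I would check that $D \subseteq L \cap C$ for some $C \in \{T_0, T_1, S\}$ and conclude by Lemma~\ref{lemma:LT0LT1LScontinuum}: concretely, $L \cap T_0$, $L \cap T_0 \cap T_1$, $[c_0]$, and the trivial clone all lie in $L \cap T_0$; the clones $L \cap T_1$ and $[c_1]$ lie in $L \cap T_1$; and $L \cap S$ together with $[\neg]$ lie in $L \cap S$, the latter because negation is self-dual.

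The only potential obstacle is having a correct enumeration of the subclones of $L$, but this list is classical and can be read off Post's lattice (see Figure~\ref{figure:PostsLattice}). Once that list is in hand, each of the inclusions in the third bucket is immediate and the theorem follows at once by assembling the four earlier statements.
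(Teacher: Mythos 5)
Your proof is correct and is essentially the paper's own argument: the paper derives Theorem~\ref{theorem:subL} by exactly the same collation of Theorem~\ref{theorem:Lcontinuum} with Lemmas~\ref{lemma:LT0LT1LScontinuum}, \ref{lemma:C01continuum} and \ref{lemma:Omega1continuum}, merely without writing out the enumeration of the subclones of $L$. Your explicit case check is accurate (reading Lemma~\ref{lemma:LT0LT1LScontinuum} in its evidently intended form, namely $D \subseteq L \cap C$ for some $C \in \{T_0, T_1, S\}$), so nothing further is needed.
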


\section{The clone $T_{0,2}$ and its subclones} \label{section:T02}

\newcommand{\RTzerotwo}{R^{0,2}}

In this section we first give an alternative proof for the fact that $\intervalStr{T_{0,2}}$
has the cardinality of the continuum. The relations used are similar to the ones given in 
\cite{CouceiroHaddadSchoelzelWaldhauser:2013:ISMVL}, but the proof here only uses relations.

\subsection{Alternative proof for $\intervalStr{T_{0,2}}$ is continuum} 
The proof given in this section uses some ideas from the proof in \cite{CouceiroHaddadSchoelzelWaldhauser:2013:ISMVL},
but changes the basic building blocks of the relations used. Furthermore, 
while the former proof depended on working with functions, this proof here only deals with relations. 

Let $\rho_{0,2} := \{ (0,0), (0,1), (1,0) \}$. Then we remember that $T_{0,2} = \Pol \rho_{0,2}$.

Let $\RTzerotwo_{C,n}$ and $\RTzerotwo_{K,n}$ be two $n$-ary relations defined by
\begin{align*}
  \RTzerotwo_{C,n}(x_1,\dots,x_n) & := \bigwedge_{i \in [n]} \rho_{0,2}(x_i,x_{i+1 \bmod{n}}), \\
  \RTzerotwo_{K,n}(x_1,\dots,x_n) & := \bigwedge_{\substack{i,j \in [n] \\ i \neq j}} \rho_{0,2}(x_i,x_j). \\
  \intertext{Furthermore, let}
  \RTzerotwo_n & := \RTzerotwo_{C,n} \times \RTzerotwo_{K,n}.
\end{align*}
The names $C$ and $K$ in the indices of the relations are in correspondance 
with the circular graph $C_n$ and the complete graph $K_n$ on $n$ vertices.
The relations $\RTzerotwo_{C,n}$ have the same definition as $R^k_{\uparrow}$ in 
\cite{CouceiroHaddadSchoelzelWaldhauser:2013:ISMVL}. The idea behind replacing
$R^k_{\downarrow}$ with $\RTzerotwo_{K,n}$ stemmed from the fact that with graphs the following
holds:
\begin{itemize}
  \item Let $n' > n \geq 3$ be two odd numbers. Then there is no graph homomorphism from $C_n$ into $C_{n'}$.
  \item For $n' > n \geq 3$ there is no graph homomorphism from $K_{n'}$ into $K_n$.
  \item For $n',n \geq 3$ there is no graph homomorphism from $K_n$ into $C_{n'}$.
\end{itemize}
The relation $\RTzerotwo_n$ represents in this model the disjoint union $C_n \uplus K_n$ of $C_n$ and $K_n$.
Let $G \to H$ denote the fact, that there is some graph homomorphism from $G$ to $H$. 
We consider the possible homomorphisms from $K_{n'} \uplus C_{n'}$ to $K_n \uplus C_n$. Then we see
\begin{itemize}
  \item
    for $n' > n \geq 3$ that $C_{n'} \uplus K_{n'} \not\to C_{n} \uplus K_{n}$, 
    since $K_{n'} \not\to K_{n}$ and $K_{n'} \not\to C_{n}$; and
  \item
    for $n > n' \geq 3$ that any homomorphism from $C_{n'} \uplus K_{n'}$ to $C_{n} \uplus K_{n}$
    actually maps into $K_{n}$, since $C_{n'} \not\to C_{n}$ and $K_{n'} \not\to C_{n}$.
    But for the construction of $\RTzerotwo_{n}$ this would mean that the first $n$ coordinates
    are not essential, a contradiction.
\end{itemize}

For $n = 5$ the relations $\RTzerotwo_{C,5}$ and $\RTzerotwo_{K,5}$ look like this:
\[ \arraycolsep=2.8pt
  \RTzerotwo_{C,5} = 
    \begin{pmatrix}
      0 & 0 & 0 & 0 & 0 & 1 & 0 & 0 & 1 & 0 & 1 \\
      0 & 0 & 0 & 0 & 1 & 0 & 0 & 1 & 0 & 1 & 0 \\
      0 & 0 & 0 & 1 & 0 & 0 & 1 & 0 & 1 & 0 & 0 \\
      0 & 0 & 1 & 0 & 0 & 0 & 0 & 1 & 0 & 0 & 1 \\
      0 & 1 & 0 & 0 & 0 & 0 & 1 & 0 & 0 & 1 & 0 
    \end{pmatrix} \quad 
  \RTzerotwo_{K,5} = 
    \begin{pmatrix}
      0 & 0 & 0 & 0 & 0 & 1 \\
      0 & 0 & 0 & 0 & 1 & 0 \\
      0 & 0 & 0 & 1 & 0 & 0 \\
      0 & 0 & 1 & 0 & 0 & 0 \\
      0 & 1 & 0 & 0 & 0 & 0 
    \end{pmatrix}
\]

\begin{lemma}
  Let $n \geq 2$. Then $\Pol \RTzerotwo_n = \Pol \RTzerotwo_{C,n} = \Pol \RTzerotwo_{K,n} = T_{0,2}$. 
\end{lemma}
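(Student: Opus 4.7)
The plan is to establish two inclusions for each of the three relations: $T_{0,2}$ is contained in each polymorphism clone, and each polymorphism clone is contained in $T_{0,2}$. The first direction is almost by construction, and the second uses a single projection trick that works uniformly.

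For the inclusion $T_{0,2} \subseteq \Pol \RTzerotwo_{C,n} \cap \Pol \RTzerotwo_{K,n} \cap \Pol \RTzerotwo_n$: each of the three relations is defined as a conjunction of atoms of the form $\rho_{0,2}(x_i, x_j)$. If $f \in T_{0,2}$ is $m$-ary and $\xx^1, \dots, \xx^m \in \RTzerotwo_{C,n}$, then for every cyclic pair of coordinates $(i, i{+}1 \bmod n)$ we have $(\xx^k_i, \xx^k_{i+1 \bmod n}) \in \rho_{0,2}$ for all $k$, so applying $f$ componentwise yields a pair again in $\rho_{0,2}$. Hence the componentwise image lies in $\RTzerotwo_{C,n}$. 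The argument for $\RTzerotwo_{K,n}$ and $\RTzerotwo_n$ is identical (the latter because a direct product of two relations is preserved iff each factor is).

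For the reverse inclusion, the strategy is to show that $\rho_{0,2}$ is pp-definable from each of the three relations purely by projection; since polymorphisms are preserved under pp-definability, it follows that $\Pol \RTzerotwo_{C,n}, \Pol \RTzerotwo_{K,n}, \Pol \RTzerotwo_n \subseteq \Pol \rho_{0,2} = T_{0,2}$. The key claim is
\[
  \rho_{0,2}(x_1, x_2) \iff \exists x_3, \dots, x_n \colon \RTzerotwo_{C,n}(x_1, x_2, x_3, \dots, x_n),
\]
and likewise for $\RTzerotwo_{K,n}$ and (projecting onto two adjacent coordinates of the $C$-block) for $\RTzerotwo_n$. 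The $(\Leftarrow)$ direction is immediate because $\rho_{0,2}(x_1, x_2)$ already appears as one of the conjuncts in the definition. For $(\Rightarrow)$: if $(x_1, x_2) \in \rho_{0,2}$, at most one of $x_1, x_2$ is $1$, so the padding $x_3 = \cdots = x_n = 0$ gives a tuple with at most one occurrence of $1$, and all $\rho_{0,2}$-constraints involve at least one $0$ and are satisfied.

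There is really no main obstacle here; the argument is essentially bookkeeping, and the only place one must pause is verifying the $n=2$ boundary case, which is handled by noting that $\rho_{0,2}$ is symmetric, so both $\RTzerotwo_{C,2}$ and $\RTzerotwo_{K,2}$ simply equal $\rho_{0,2}$ (the pp-definition above then uses the empty projection). For $n \geq 3$ the padding-by-zeros extension goes through uniformly for all three relations.
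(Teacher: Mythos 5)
Your proposal is correct and follows essentially the same route as the paper: the forward inclusion is immediate because all three relations are conjunctions of $\rho_{0,2}$-atoms, and the reverse inclusion comes from observing that $\rho_{0,2} = \pr_{1,2}\RTzerotwo_{C,n} = \pr_{1,2}\RTzerotwo_{K,n}$, so every polymorphism of these relations preserves $\rho_{0,2}$. The only difference is that you spell out the zero-padding verification that the projection is exactly $\rho_{0,2}$ (and the $n=2$ boundary case), which the paper asserts without detail.
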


\begin{proof}
  By construction we have that $T_{0,2} \subseteq \Pol \RTzerotwo_{C,n}$, $T_{0,2} \subseteq \Pol \RTzerotwo_{K,n}$, and
  $\Pol \RTzerotwo_n = \Pol \RTzerotwo_{C,n} \cap \Pol \RTzerotwo_{K,n}$.

  Since $\rho_{0,2} = \pr_{1,2} \RTzerotwo_{C,n} = \pr_{1,2} \RTzerotwo_{K,n}$ we obtain 
  $\Pol \RTzerotwo_{C,n} \subseteq T_{0,2}$, and $\Pol \RTzerotwo_{K,n} \subseteq T_{0,2}$. From this follows 
  $\Pol \RTzerotwo_{C,n} = T_{0,2} = \Pol \RTzerotwo_{K,n}$, and consequently $T_{0,2} = \Pol \RTzerotwo_n$.
\end{proof}

Let $\hat{\IN} := \{ n \in \IN \mid n \text{ odd}, n \geq 3 \}$.
Let $n \in \hat{\IN}$ and $M \subseteq \hat{\IN} \setminus \{n\}$ for the rest of this section. We want
to show that 
\begin{equation} \label{equation:T02Aim}
  \pPol \RTzerotwo_n \not\supseteq \bigcap_{m \in M} \pPol \RTzerotwo_m
\end{equation}
holds. We assume to the contrary, that \eqref{equation:T02Aim} is false.
This means that by Theorem~\ref{theorem:definability} we can write
\begin{equation} \label{equation:failingConstructionOfRn}
  \RTzerotwo_n := \{ \xx \in \2^{2n} \mid \xx_\ii \in \RTzerotwo_m \text{ for all } \ii \in \gamma_m \text{ and } m \in M \}
\end{equation}
for some auxiliary relations $\gamma_m$ for all $m \in M$.
Furthermore, we can assume that no condition is superfluous.


\begin{lemma} \label{lemma:seperateCK}
  Let $m \in M$ with $\gamma_m \neq \emptyset$, and $\ii \in \gamma_m$.

  Then $\ii_{[m]} \subseteq [n]$ or $\ii_{[m]} \subseteq [n+1,2n]$. \\
  Similarly, $\ii_{[m+1,2m]} \subseteq [n]$ or $\ii_{[m+1,2m]} \subseteq [n+1,2n]$.
\end{lemma}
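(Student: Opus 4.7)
My plan is to exploit the product decomposition $\RTzerotwo_m = \RTzerotwo_{C,m} \times \RTzerotwo_{K,m}$ and the corresponding one for $\RTzerotwo_n$, and to reason by contradiction: assuming some $\ii = (i_1,\ldots,i_{2m}) \in \gamma_m$ violates one half of the dichotomy, I would exhibit a tuple $\xx \in \RTzerotwo_n$ with $\xx_\ii \notin \RTzerotwo_m$, contradicting the characterization \eqref{equation:failingConstructionOfRn}. The key observation is that $\xx \in \2^{2n}$ lies in $\RTzerotwo_n$ iff $\xx_{[n]}$ has no two cyclically consecutive $1$s and $\xx_{[n+1,2n]}$ contains at most one $1$; in particular, placing a single $1$ in each of the two halves and $0$s elsewhere is always permitted.

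For the first statement, suppose the set $\{i_1,\ldots,i_m\}$ meets both $[n]$ and $[n+1,2n]$. Walking around the cyclic sequence $i_1, i_2, \ldots, i_m, i_1$, I must find some $k \in [m]$ for which $i_k$ and $i_{k+1 \bmod m}$ lie in opposite halves; these two indices are then necessarily distinct. I construct $\xx \in \2^{2n}$ by placing $1$s at the positions $i_k$ and $i_{k+1 \bmod m}$ and $0$s everywhere else. Since $\xx_{[n]}$ has a single $1$ (trivially an independent set on $C_n$) and $\xx_{[n+1,2n]}$ also has a single $1$, it follows that $\xx \in \RTzerotwo_n$. However, the subtuple $(x_{i_1},\ldots,x_{i_m})$ carries $1$s at the cyclically consecutive positions $k$ and $k+1 \bmod m$, so it fails $\RTzerotwo_{C,m}$, giving $\xx_\ii \notin \RTzerotwo_m$, the desired contradiction.

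The second claim follows by the same template, using the clique constraint $\RTzerotwo_{K,m}$ on the last $m$ coordinates of $\ii$ in place of the cycle constraint. If $\{i_{m+1},\ldots,i_{2m}\}$ contains some $i \in [n]$ and some $j \in [n+1,2n]$, I set $x_i = x_j = 1$ and all other coordinates to $0$; as before $\xx \in \RTzerotwo_n$, but now the subtuple $(x_{i_{m+1}},\ldots,x_{i_{2m}})$ contains at least two $1$s, violating $\RTzerotwo_{K,m}$, so again $\xx_\ii \notin \RTzerotwo_m$.

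I do not anticipate any genuine obstacle here. The decisive point is that $\RTzerotwo_n$ is permissive enough to tolerate an isolated $1$ in each half, while $\RTzerotwo_{C,m}$ forbids adjacent $1$s and $\RTzerotwo_{K,m}$ forbids any two $1$s; the cyclic structure of $\RTzerotwo_{C,m}$ is exactly what guarantees that a mixed $\ii_{[m]}$ must produce an adjacent cross-halves pair, which is the only subtle step in the argument.
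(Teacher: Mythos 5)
Your proof is correct and follows essentially the same route as the paper: locate a cyclically adjacent pair of indices in $\ii_{[m]}$ (respectively any pair in $\ii_{[m+1,2m]}$) straddling the two halves, and observe that the tuple with a single $1$ in each half lies in $\RTzerotwo_n$ yet its restriction along $\ii$ violates the $\RTzerotwo_{C,m}$ (respectively $\RTzerotwo_{K,m}$) constraint. The paper's proof is the same argument, merely writing out only the first statement and leaving the clique case as ``similarly.''
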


\begin{proof}
  We only consider the first statement; the second one follows similarly.

  Assume the statement is not true. Then there is some $j \in [m]$ such that 
  $i_j \in [n]$ and $i_{j+1 \bmod{m}} \in [n+1,2n]$. By construction of $\RTzerotwo_n$ 
  (or, more specifically $\RTzerotwo_{C,n}$) this means, that $\rho_{0,2}(x_{i_j}, x_{i_{j+1 \bmod{m}}})$ holds,
  i.e., $x_{i_j}$ and $x_{i_{j+1 \bmod{m}}}$ can not both be $1$ at the same time. But by construction
  of $\RTzerotwo_n$ we have 
  \[
  (0,\dots,0,\atPos{i_j}{1},0,\dots,0,\atPos{i_{j+1 \bmod{m}}}{1},0,\dots,0) \in \RTzerotwo_n.
  \]
  This is a contradiction, and thus $\ii_{[m]} \subseteq [n]$ or $\ii_{[m]} \subseteq [n+1,2n]$.
\end{proof}

\begin{lemma} \label{lemma:R02mlessn}
  Let $m \in M$ and $\gamma_m \neq \emptyset$.

  Then $m < n$.
\end{lemma}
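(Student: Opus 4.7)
The plan is to argue by contradiction: assume $m \geq n+1$, pick any $\ii \in \gamma_m$, and exhibit an $\xx^* \in \RTzerotwo_n$ whose restriction $\xx^*_\ii$ lies outside $\RTzerotwo_m$, contradicting the decomposition \eqref{equation:failingConstructionOfRn}.

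The key lever is the second half $\ii_{[m+1,2m]}$ of the tuple $\ii$, which drives the $\RTzerotwo_{K,m}$-factor of the constraint. By Lemma~\ref{lemma:seperateCK}, this half is contained either in $[n]$ or in $[n+1,2n]$, both of which have cardinality $n$. Since $\ii_{[m+1,2m]}$ lists $m$ entries and $m > n$, the pigeonhole principle yields two distinct positions $j_1, j_2 \in [m+1, 2m]$ with $i_{j_1} = i_{j_2}$; let $c$ denote this shared value.

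Now take $\xx^* \in \2^{2n}$ to be the unit vector with $x^*_c = 1$ and all other coordinates $0$. A direct inspection of $\RTzerotwo_{C,n}$ and $\RTzerotwo_{K,n}$ confirms $\xx^* \in \RTzerotwo_n$ in either case ($c \in [n]$ or $c \in [n+1, 2n]$), because a single $1$ simultaneously forms an independent set in $C_n$ and satisfies the at-most-one-$1$ rule of $K_n$. On the other hand, positions $j_1$ and $j_2$ of $\xx^*_\ii$ both equal $1$, so the $[m+1, 2m]$-block of $\xx^*_\ii$ contains two $1$'s and thereby fails $\RTzerotwo_{K,m}$; hence $\xx^*_\ii \notin \RTzerotwo_m$, contradicting \eqref{equation:failingConstructionOfRn}. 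This forces $m \leq n$, and since $m \neq n$ by the ambient assumption $M \subseteq \hat{\IN} \setminus \{n\}$, we conclude $m < n$.

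No serious obstacle arises; the whole argument is essentially one pigeonhole estimate. The subtlety worth flagging is the asymmetry between the two factors of $\RTzerotwo_m$: $\RTzerotwo_{C,m}$ only reacts to \emph{cyclically consecutive} repeated indices, so pigeonhole on $\ii_{[m]}$ alone would not suffice; it is the $\RTzerotwo_{K,m}$-factor, which reacts to \emph{any} repeated index, that makes the bound $m \leq n$ tight. This is precisely why the $K$-factor is built into the product relation $\RTzerotwo_n$ in the first place.
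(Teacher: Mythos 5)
Your proof is correct and follows essentially the same route as the paper's: apply Lemma~\ref{lemma:seperateCK} to the block $\ii_{[m+1,2m]}$, use pigeonhole ($m>n$ indices into a set of size $n$) to find a repeated index in the $K$-part, and contradict membership of the corresponding unit vector in $\RTzerotwo_n$. The paper phrases the final contradiction as the degenerate constraint $\rho_{0,2}(x_{i_j},x_{i_j})$ forcing $x_{i_j}=0$, which is the same observation as your ``two $1$'s in the $K$-block'' formulation.
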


\begin{proof}
  Assume to the contrary, that $m \in M$, $\gamma_m \neq \emptyset$, and $m > 0$. 
  
  Then there is some $\ii \in \gamma_m$.
  By Lemma~\ref{lemma:seperateCK} we have 
  $\ii_{[m+1,2m]} \subseteq [n]$ or $\ii_{[m+1,2m]} \subseteq [n+1,2n]$. Thus $i_j = i_{j'}$ 
  for some $j,j' \in [m+1,2m]$ with $j \neq j'$. By construction of $\RTzerotwo_n$ 
  (or, more specifically $\RTzerotwo_{K,n}$) this means, that $\rho_{0,2}(x_{i_j}, x_{i_j})$ holds,
  i.e., $x_{i_j} = 0$. But by construction
  of $\RTzerotwo_n$ we have 
  \[
  (0,\dots,0,\atPos{i_j}{1},0,\dots,0) \in \RTzerotwo_n.
  \]
  This is a contradiction, and thus $m < n$.
\end{proof}

Since $\RTzerotwo_n$ is not a trivial relation, there is at least one $m < n$ with non-empty $\gamma_m$. 
Thus we can assume that $n \geq 5$.

\begin{lemma} \label{lemma:R02intoKn}
  Let $m \in M$, $m < n$, and $\ii \in \gamma_m \neq \emptyset$.

  Then $\ii_{[m]} \subseteq [n+1,2n]$, and $\ii_{[m+1,2m]} \subseteq [n+1,2n]$.
\end{lemma}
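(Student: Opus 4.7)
The plan is to argue by contradiction and reduce each claim to the non-existence of a certain graph homomorphism into the odd cycle $C_n$, exactly in line with the graph-theoretic discussion preceding the definition of $\RTzerotwo_n$. By Lemma~\ref{lemma:seperateCK} each of $\ii_{[m]}$ and $\ii_{[m+1,2m]}$ is contained entirely in $[n]$ (the $C_n$-block of $\RTzerotwo_n=\RTzerotwo_{C,n}\times\RTzerotwo_{K,n}$) or entirely in $[n+1,2n]$ (the $K_n$-block), so it suffices to rule out the two cases $\ii_{[m]}\subseteq[n]$ and $\ii_{[m+1,2m]}\subseteq[n]$.

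For the first case I assume $\ii_{[m]}\subseteq[n]$ and aim to extract a homomorphism $C_m\to C_n$. The witnesses I will use are tuples $\xx=(\yy,\0)\in\RTzerotwo_n$ with $\yy\in\RTzerotwo_{C,n}$ arbitrary; these indeed lie in $\RTzerotwo_n$ because $\0\in\RTzerotwo_{K,n}$, and on them the first-block constraint $(\xx_\ii)_{[m]}\in\RTzerotwo_{C,m}$ becomes $(y_{i_1},\dots,y_{i_m})\in\RTzerotwo_{C,m}$. Identifying $\RTzerotwo_{C,n}$ with the characteristic vectors of independent sets in $C_n$, I would first choose $\yy$ to be the indicator of the singleton $\{i_j\}$ to rule out $i_j=i_{j+1\bmod m}$, and then, whenever $i_j$ and $i_{j+1\bmod m}$ are non-adjacent in $C_n$, choose $\yy$ to be the indicator of $\{i_j,i_{j+1\bmod m}\}$ to get the remaining contradiction. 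Hence $j\mapsto i_j$ is a homomorphism $C_m\to C_n$, which is impossible for odd $m<n$ since the odd girth of $C_n$ equals $n$.

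For the second case I assume $\ii_{[m+1,2m]}\subseteq[n]$ and argue symmetrically. The same witness tuples $\xx=(\yy,\0)$ now force $(y_{i_{m+1}},\dots,y_{i_{2m}})\in\RTzerotwo_{K,m}$ for every $\yy\in\RTzerotwo_{C,n}$, i.e., at most one of these coordinates is $1$. Singleton independent sets in $C_n$ force the indices $i_{m+1},\dots,i_{2m}$ to be pairwise distinct, and two-element independent sets force them to be pairwise adjacent in $C_n$; thus $\{i_{m+1},\dots,i_{2m}\}$ is an $m$-clique of $C_n$. Since $m\ge 3$ and $n\ge 5$ (as established immediately before the lemma), $C_n$ is triangle-free and no such clique exists; contradiction.

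The main subtlety will be bookkeeping rather than a deep obstacle: I have to isolate the relevant block of $\ii$ from the other block when building witnesses (done by setting the other block of $\xx$ to $\0$), and I have to cleanly dispose of the degenerate possibility that two coordinates of $\ii$ coincide (done by the singleton-independent-set witness). Once these are handled, everything reduces to the impossibility of the homomorphisms $C_m\to C_n$ and $K_m\to C_n$, which is a standard consequence of the odd girth and the triangle-freeness of $C_n$ and is exactly the reason the factors $\RTzerotwo_{C,n}$ and $\RTzerotwo_{K,n}$ were chosen in the first place.
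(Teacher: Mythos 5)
Your proof is correct and follows essentially the same approach as the paper: both reduce via Lemma~\ref{lemma:seperateCK} to the case where a block of $\ii$ lands in $[n]$, and both derive a contradiction from tuples of $\RTzerotwo_n$ that are characteristic vectors of singletons or of two-element independent sets of $C_n$ padded with zeros, the paper exhibiting one offending cyclically adjacent pair directly where you package the same fact as the non-existence of a homomorphism $C_m\to C_n$ (resp.\ $K_m\to C_n$). The only difference is that you write out the $\ii_{[m+1,2m]}$ case explicitly, which the paper dismisses as ``follows similarly.''
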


\begin{proof}
  We only consider the first statement; the second one follows similarly.

  Assume to the contrary that $\ii_{[m]} \not\subseteq [n+1,2n]$ holds.
  By Lemma~\ref{lemma:seperateCK} we have $\ii_{[m]} \subseteq [n]$.
  
  If $|\ii_{[m]}| \leq 2$ then there is some $j \in [m]$ with $i_j = i_{j+1 \bmod{m}}$,
  implying $x_{i_j} = 0$. But by construction
  of $\RTzerotwo_n$ we have 
  \[
  (0,\dots,0,\atPos{i_j}{1},0,\dots,0) \in \RTzerotwo_n.
  \]
  Thus $|\ii_{[m]}| \geq 3$. Since $\RTzerotwo_{C,n}$ has a circular structure, and $m \leq n-2$, we have some
  $j,j' \in [m]$ with $j' = j+1 \bmod{m}$ and $|i_j - i_{j'} \bmod{n}| \geq 2$.
  But $i_j, i_{j'} \in [n]$ and by construction of $\RTzerotwo_n$ 
  (or, more specifically $\RTzerotwo_{C,n}$) this means, that $\rho_{0,2}(x_{i_j}, x_{i_{j'}})$ holds,
  i.e., $x_{i_j}$ and $x_{i_{j'}}$ can not both be $1$ at the same time. But by construction
  of $\RTzerotwo_n$ we have 
  \[
  (0,\dots,0,\atPos{i_j}{1},0,\dots,0,\atPos{i_{j'}}{1},0,\dots,0) \in \RTzerotwo_n.
  \]
  This is a contradiction, and thus $\ii_{[m]} \not\subseteq [n]$.
\end{proof}

This shows that in the right hand side of \eqref{equation:failingConstructionOfRn} 
the variables $x_1,\dots,x_n$ are inessential. 
But this contradicts the fact, that these variables are essential in $\RTzerotwo_n$. Thus follows:
\begin{theorem} \label{theorem:T02:nonconstructible}
  $\pPol \RTzerotwo_n \not\supseteq \bigcap_{m \in M} \pPol \RTzerotwo_m$.
\end{theorem}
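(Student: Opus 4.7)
The plan is to derive the theorem by a short proof by contradiction that assembles the three structural lemmas already established in this subsection. First I would assume that the containment $\pPol \RTzerotwo_n \supseteq \bigcap_{m \in M} \pPol \RTzerotwo_m$ holds. The relation $\RTzerotwo_n$ is irredundant: distinct coordinates are separated by the unit tuples in $\RTzerotwo_{C,n}$ and $\RTzerotwo_{K,n}$ (and by setting one block to zero when distinguishing across blocks), while no coordinate is fictitious because, in $\RTzerotwo_{C,n}$, the tuple with $1$'s at positions $i-1$ and $i+1$ (and zeros elsewhere) forbids flipping the $i$-th entry, and analogously for $\RTzerotwo_{K,n}$. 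Hence Theorem~\ref{theorem:definability} yields families $\gamma_m$ such that the identity~\eqref{equation:failingConstructionOfRn} holds together with the covering condition $[2n] = \bigcup_{m \in M} \bigcup_{\ii \in \gamma_m} [\ii]$.

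Next I would feed this data into the two key structural lemmas. Lemma~\ref{lemma:R02mlessn} forces every $m \in M$ with $\gamma_m \neq \emptyset$ to satisfy $m < n$, and Lemma~\ref{lemma:R02intoKn} then forces, for every such $m$ and every $\ii \in \gamma_m$, both $\ii_{[m]} \subseteq [n+1,2n]$ and $\ii_{[m+1,2m]} \subseteq [n+1,2n]$, i.e.\ $[\ii] \subseteq [n+1,2n]$. Consequently,
\[
\bigcup_{m \in M} \bigcup_{\ii \in \gamma_m} [\ii] \subseteq [n+1,2n] \subsetneq [2n],
\]
so the coordinates $1,\dots,n$ are missed by the covering. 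This contradicts the essentiality part of Theorem~\ref{theorem:definability}, proving the desired non-containment.

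Since all the nontrivial combinatorics — the circular-versus-complete-graph distinction and the pigeonhole argument that rules out indices in $[n]$ — has been absorbed into Lemmas~\ref{lemma:seperateCK}, \ref{lemma:R02mlessn} and~\ref{lemma:R02intoKn}, there is no real obstacle left; the only thing one has to be mildly careful about is verifying irredundance of $\RTzerotwo_n$ so that Romov's lemma applies, and recording that the preceding lemmas already justified reducing to $n \geq 5$, which suffices for the covering obstruction.
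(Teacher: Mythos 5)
Your proposal is correct and follows essentially the same route as the paper: assume the containment, invoke Theorem~\ref{theorem:definability} to obtain the representation \eqref{equation:failingConstructionOfRn}, and then use Lemmas~\ref{lemma:R02mlessn} and~\ref{lemma:R02intoKn} to force every $[\ii]$ into $[n+1,2n]$, contradicting the fact that the coordinates $1,\dots,n$ must be covered (equivalently, are essential in $\RTzerotwo_n$). Your explicit verification of irredundance is a detail the paper leaves implicit, and is welcome; just note that your particular fictitiousness witness (ones at positions $i-1$ and $i+1$ in the $C$-block) needs $n\geq 5$, which is harmless since for $n=3$ all $\gamma_m$ are already empty by Lemma~\ref{lemma:R02mlessn}.
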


\begin{corollary}
  Let $X,Y \subseteq \hat{\IN}$ be non-empty sets. Then
  \[
  \bigcap_{n \in X} \pPol \RTzerotwo_n = \bigcap_{m \in Y} \pPol \RTzerotwo_m \iff X = Y.
  \]
\end{corollary}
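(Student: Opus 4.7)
The $\Leftarrow$ direction is trivial: if $X = Y$, the two intersections are literally the same. So the work is in the forward direction, and my plan is to obtain it as a direct application of Theorem~\ref{theorem:T02:nonconstructible} via the contrapositive.

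Assume $X \neq Y$. Since both sets are non-empty subsets of $\hat{\IN}$, at least one of the symmetric differences $X \setminus Y$ or $Y \setminus X$ is non-empty, and by the symmetric role of $X$ and $Y$ in the statement I may assume without loss of generality that there exists $n \in X \setminus Y$. Then $n \in \hat{\IN}$ and $Y \subseteq \hat{\IN} \setminus \{n\}$, so Theorem~\ref{theorem:T02:nonconstructible} applies with $M := Y$, yielding
\[
  \pPol \RTzerotwo_n \not\supseteq \bigcap_{m \in Y} \pPol \RTzerotwo_m.
\]
On the other hand, since $n \in X$, the trivial inclusion $\bigcap_{n' \in X} \pPol \RTzerotwo_{n'} \subseteq \pPol \RTzerotwo_n$ holds. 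If the two intersections in the corollary were equal, we would obtain $\pPol \RTzerotwo_n \supseteq \bigcap_{m \in Y} \pPol \RTzerotwo_m$, which contradicts the previous display. Hence the intersections differ, completing the contrapositive.

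There is essentially no obstacle here, because the entire content has already been packed into Theorem~\ref{theorem:T02:nonconstructible}; the only thing to watch is that the theorem requires $M$ to be a subset of $\hat{\IN} \setminus \{n\}$, which is automatic once $n$ is chosen in $X \setminus Y$. The non-emptiness hypothesis on both $X$ and $Y$ is used only to guarantee that $X \setminus Y \neq \emptyset$ or $Y \setminus X \neq \emptyset$ in the case $X \neq Y$ (in fact, non-emptiness of one of the two sides would already suffice to exchange roles).
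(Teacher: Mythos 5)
Your proof is correct and is exactly the intended deduction: the paper states this corollary without proof as an immediate consequence of Theorem~\ref{theorem:T02:nonconstructible}, and your argument (pick $n \in X \setminus Y$ without loss of generality, apply the theorem with $M := Y$, and compare with the trivial inclusion $\bigcap_{n' \in X} \pPol \RTzerotwo_{n'} \subseteq \pPol \RTzerotwo_n$) is precisely that consequence spelled out. No issues.
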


\subsection{The subclones of $T_{0,2}$}

Now we will look at the intervals $\intervalStr{D}$ for all subclones $D \subseteq T_{0,2}$. 
We use the fact that $T_{0,2} \subseteq T_0 = \Pol \{0\}$, and let $T := \cPol (\{0\},\emptyset)$.
In this way the conditions \ref{enum:strongdownwards:totalempty} and \ref{enum:strongdownwards:QiHungry}
of Lemma~\ref{lemma:strongdownwards} are fulfilled due to Corollary~\ref{corollary:emptysetIsVeryHungry}. 
The only condition left to show is 
\ref{enum:strongdownwards:different} for the set $I$ defined by
\[
  I := \left\{ \bigcap_{n \in X} \pPol \RTzerotwo_n \mid X \subseteq \hat{\IN}, X \neq \emptyset \right\}.
\]

\begin{lemma}
  Let $n \in \hat{\IN}$ and $M \subseteq \hat{\IN} \setminus \{ n \}$.

  Then $\pPol \RTzerotwo_n \not\supseteq T \cap \bigcap_{m \in M} \pPol \RTzerotwo_m$.
\end{lemma}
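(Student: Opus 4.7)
The strategy is to construct an explicit witness $f$ in $T \cap \bigcap_{m \in M} \pPol \RTzerotwo_m$ that fails to preserve $\RTzerotwo_n$, in the spirit of the proof of Theorem~\ref{theorem:T02:nonconstructible}. Enumerate $\RTzerotwo_n = \{\xx_1, \dots, \xx_k\}$, form the $2n \times k$ characteristic matrix $N$ whose $j$-th column is $\xx_j$, and denote its rows by $r_1, \dots, r_{2n} \in \2^k$. The candidate is the partial function $f \colon \{r_1, \dots, r_{2n}\} \to \2$ given by $f(r_i) := z_i$ for a suitable tuple $\zz = (z_1, \dots, z_{2n}) \in \2^{2n} \setminus \RTzerotwo_n$.

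The key structural observation is that, for every $i \in [2n]$, the $2n$-tuple with a single $1$ at coordinate $i$ and zeros elsewhere belongs to $\RTzerotwo_n$: in both the circular and the complete half, at most one coordinate is nonzero, so every $\rho_{0,2}$-constraint is satisfied automatically. Three consequences follow. First, $\RTzerotwo_n$ is irredundant and the rows $r_1, \dots, r_{2n}$ are pairwise distinct, so $f$ is well-defined. Second, each $r_i$ carries a $1$ in the column indexed by this unit tuple, so $r_i \neq \0 \in \2^k$, which gives $f \in T$. Third, $f(N) = \zz \notin \RTzerotwo_n$, so $f \notin \pPol \RTzerotwo_n$ regardless of the choice of $\zz$.

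What remains is to pick $\zz$ so that $f \in \pPol \RTzerotwo_m$ for every $m \in M$. Unwinding the definition of preservation on rows drawn from $\{r_1, \dots, r_{2n}\}$, this is equivalent to requiring $\zz_\ii \in \RTzerotwo_m$ for every $\ii \in [2n]^{2m}$ such that $\xx_\ii \in \RTzerotwo_m$ holds for all $\xx \in \RTzerotwo_n$. I take $\zz := (1, 1, 0, \dots, 0) \in \2^{2n}$: this tuple sits outside $\RTzerotwo_n$ because its first two coordinates violate the cyclic $\rho_{0,2}$-constraint in the $\RTzerotwo_{C,n}$-half, and since $\zz$ is zero on the block $[n+1, 2n]$, the required inclusion $\zz_\ii \in \RTzerotwo_m$ reduces to $\0 \in \RTzerotwo_m$ as soon as $[\ii] \subseteq [n+1, 2n]$.

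The main obstacle is verifying that every relevant $\ii$ indeed satisfies $[\ii] \subseteq [n+1, 2n]$. The arguments in the proofs of Lemmas~\ref{lemma:seperateCK}, \ref{lemma:R02mlessn}, and \ref{lemma:R02intoKn} depend only on specific tuples of $\RTzerotwo_n$ (essentially the unit tuples just identified) and not on the origin of $\ii$ as an element of a hypothetical Romov decomposition $\gamma_m$; reread in this abstract form, they yield exactly the conclusion needed here. Combining the pieces gives $f \in T \cap \bigcap_{m \in M} \pPol \RTzerotwo_m \setminus \pPol \RTzerotwo_n$, which proves the lemma.
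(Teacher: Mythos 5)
Your proof is correct, but it takes a genuinely different route from the paper's. The paper treats Theorem~\ref{theorem:T02:nonconstructible} as a black box: it starts from an arbitrary separating function $f \in \bigl(\bigcap_{m \in M} \pPol R^{0,2}_m\bigr) \setminus \pPol R^{0,2}_n$, restricts it to the finitely many rows of a witnessing matrix, and then repeatedly prepends a coordinate (exploiting the fact that the unit tuples lie in $R^{0,2}_n$) to push $\0$ out of the domain while staying below $\nabla f$, hence inside $\bigcap_{m}\pPol R^{0,2}_m$. You instead build the witness in one shot as the canonical function of the characteristic matrix of $R^{0,2}_n$ with value tuple $\zz=(1,1,0,\dots,0)$, which makes membership in $T$ automatic (the unit tuples force every row to be nonzero and pairwise distinct) and reduces membership in each $\pPol R^{0,2}_m$ to the purely relational claim that every $\ii$ with $\xx_\ii \in R^{0,2}_m$ for all $\xx \in R^{0,2}_n$ satisfies $[\ii] \subseteq [n+1,2n]$, whence $\zz_\ii = \0 \in R^{0,2}_m$. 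The one point that needed checking is your assertion that Lemmas~\ref{lemma:seperateCK}, \ref{lemma:R02mlessn} and \ref{lemma:R02intoKn} apply to any such $\ii$ and not only to members of a Romov decomposition $\gamma_m$; this is indeed the case, since their proofs only exhibit concrete tuples of $R^{0,2}_n$ violating the putative constraint and never use minimality or non-superfluousness of the decomposition. The trade-off: the paper's argument is modular and avoids re-opening the combinatorics (at the price of a mildly informal ``repeat until $\0\notin\dom$'' loop, which does terminate since the domain is finite), whereas yours is fully explicit and in effect re-derives the separating function of Theorem~\ref{theorem:T02:nonconstructible} with the extra property $f \in T$ built in from the start.
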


\begin{proof}
  We need to show that there is some 
  \[ 
    F \in \left( T \cap \bigcap_{m \in M} \pPol \RTzerotwo_m \right) \setminus \pPol \RTzerotwo_n.
  \]
  By Theorem~\ref{theorem:T02:nonconstructible} we have that there is some $l$-ary partial function
  \[ f \in \left( \bigcap_{m \in M} \pPol \RTzerotwo_m \right) \setminus \pPol \RTzerotwo_n. \]

  If $\0 \notin \dom f$, then $F := f \in T$ and thus we are done.

  We now assume that $\0 \in \dom f$. Since $f \notin \pPol \RTzerotwo_n$
  there is some matrix $M$ with columns $\xx_1,\dots,\xx_l \in \RTzerotwo_n$ and 
  rows $\yy_1,\dots,\yy_{2n}$ such that 
  \begin{itemize}
    \item
      $f(\xx_1,\dots,\xx_l) \notin \RTzerotwo_n$, and
    \item
      $\yy_1,\dots,\yy_{2n} \in \dom f$.
  \end{itemize}
  Let $f' \leq f$ be defined by $\dom f' := \{ \yy_1,\dots,\yy_{2n} \}$ and $f'(\yy_i) := f(\yy_i)$ for all
  $i \in [2n]$. 
  Thus we see that \[ f' \in \left( \bigcap_{m \in M} \pPol \RTzerotwo_m \right) \setminus \pPol \RTzerotwo_n. \] 

  If $\0 \notin \dom f'$, then $F := f' \in T$ and thus we are done.

  Thus there is some $j \in [2n]$, such that $\yy_j = \0$. 
  We define the $(l+1)$-ary partial function $g$ by
  \begin{align*}
    \dom g & := \{ (1,\yy_j) \} \cup \{ (0,\yy_i) \mid i \in [2n] \setminus \{j\} \}, \\
    g(1,\yy_j) & := f'(\yy_j), \\
    g(0,\yy_i) & := f'(\yy_i) \text{ for all } i \in [2n] \setminus \{j\}.
  \end{align*}
  As $g \leq \nabla f' \leq \nabla f$ we see that $g \in \bigcap_{m \in M} \pPol \RTzerotwo_m$.

  Because $\xx_0 := (0,\dots,0,\atPos{j}{1},0,\dots,0) \in \RTzerotwo_n$ we have
  \[
  g(\xx_0,\xx_1,\dots,\xx_l) = f(\xx_1,\dots,\xx_l) \notin \RTzerotwo_n,
  \]
  but all points are defined. Therefore $g \notin \pPol \RTzerotwo_n$ holds, and this implies
  \[ g \in \left( \bigcap_{m \in M} \pPol \RTzerotwo_m \right) \setminus \pPol \RTzerotwo_n. \]

  If $\0 \notin \dom g$, then $F := g \in T$ and thus we are done.
  Otherwise, repeating the steps from $f'$ to $g$ yields finally a desired $F$.
\end{proof}

\begin{corollary} \label{corollary:subT02:ITdifferent}
  Let $X,Y \in I$ with $X \neq Y$. Then $X \cap T \neq Y \cap T$.
\end{corollary}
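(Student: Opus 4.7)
The plan is to reduce this directly to the preceding non-constructibility lemma, which bounds $\pPol \RTzerotwo_n$ from above by $T \cap \bigcap_{m \in M} \pPol \RTzerotwo_m$ whenever $n \notin M$. Since the elements of $I$ are indexed by nonempty subsets of $\hat{\IN}$, and the corollary right before this one establishes that the map $A \mapsto \bigcap_{n \in A} \pPol \RTzerotwo_n$ is injective on nonempty subsets of $\hat{\IN}$, distinct elements of $I$ correspond to distinct index sets.

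So the first step is to write $X = \bigcap_{n \in A} \pPol \RTzerotwo_n$ and $Y = \bigcap_{m \in B} \pPol \RTzerotwo_m$ with nonempty $A, B \subseteq \hat{\IN}$, and invoke the preceding corollary to obtain $A \neq B$. By symmetry I may assume $A \not\subseteq B$, and I pick some $n \in A \setminus B$. Setting $M := B$, we have $n \notin M$, so the previous lemma applied with this $n$ and $M$ furnishes a partial function
\[
  F \in \left( T \cap \bigcap_{m \in B} \pPol \RTzerotwo_m \right) \setminus \pPol \RTzerotwo_n.
\]

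The second step is to observe that this $F$ separates $X \cap T$ from $Y \cap T$. Indeed, $F \in T$ and $F \in \bigcap_{m \in B} \pPol \RTzerotwo_m = Y$, so $F \in Y \cap T$. On the other hand, $X \subseteq \pPol \RTzerotwo_n$ because $n \in A$, and $F \notin \pPol \RTzerotwo_n$, so $F \notin X$, and in particular $F \notin X \cap T$. Therefore $X \cap T \neq Y \cap T$, as required.

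There is essentially no obstacle in this argument, since the hard work has been absorbed into Theorem~\ref{theorem:T02:nonconstructible} (which handles the combinatorics of $C_n \uplus K_n$) and the preceding lemma (which handles the adjustment of a witness partial function so that it lies in $T = \cPol(\{0\},\emptyset)$). The only care needed is the symmetry step and the remark that, by the corollary just before, the index set representing an element of $I$ is uniquely determined, so that one really can speak unambiguously of $A$ and $B$.
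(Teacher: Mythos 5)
Your argument is correct and is exactly the route the paper intends: the corollary is stated without proof precisely because it follows from the preceding lemma by picking $n \in A \setminus B$ (after a symmetry reduction) and using the witness $F \in (T \cap \bigcap_{m \in B}\pPol \RTzerotwo_m)\setminus\pPol \RTzerotwo_n$ to separate $X \cap T$ from $Y \cap T$. The only superfluous step is invoking the injectivity corollary to get $A \neq B$; distinctness of $X$ and $Y$ already forces any chosen index sets to differ.
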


\begin{theorem} \label{theorem:subT02}
  Let $D \subseteq T_{0,2}$ be a clone on $\OpBool$.
  
  Then $\intervalStr{D}$ has the cardinality of the continuum.
\end{theorem}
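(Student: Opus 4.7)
My plan is to dispatch the theorem uniformly, for every $D \subseteq T_{0,2}$, by a single application of Lemma~\ref{lemma:strongdownwards} with the choices $C := T_{0,2}$, $T := \cPol(\{0\}, \emptyset)$, and
\[
I := \left\{ \bigcap_{n \in X} \pPol \RTzerotwo_n \;\middle|\; \emptyset \neq X \subseteq \hat{\IN} \right\}.
\]
I would first observe that each member of $I$ is a strong partial clone (being an intersection of the strong partial clones $\pPol \RTzerotwo_n$), and that its total component equals $\bigcap_{n \in X} \Pol \RTzerotwo_n = T_{0,2}$, so $I \subseteq \intervalStr{T_{0,2}}$. The corollary following Theorem~\ref{theorem:T02:nonconstructible} shows that the map $X \mapsto \bigcap_{n \in X} \pPol \RTzerotwo_n$ is injective on non-empty subsets of $\hat{\IN}$, so $|I|$ has continuum cardinality.

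Next I would verify the three hypotheses of Lemma~\ref{lemma:strongdownwards}. Hypothesis~\ref{enum:strongdownwards:totalempty} is free: Lemma~\ref{lemma:emptyConsequentNoTotalFunction} gives $T \cap \OpBool = \emptyset \subseteq D$. For hypothesis~\ref{enum:strongdownwards:QiHungry}, the inclusion $D \subseteq T_{0,2} \subseteq T_0 = \Pol \{0\}$ forces $\Str{D} \subseteq \pPol \{0\}$, so Corollary~\ref{corollary:emptysetIsVeryHungry} (with $\rho = \{0\}$) yields the even stronger statements $T \star \Str{D} \subseteq T$ and $\Str{D} \star T \subseteq T$. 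Hypothesis~\ref{enum:strongdownwards:different} is precisely the content of Corollary~\ref{corollary:subT02:ITdifferent}.

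With all three hypotheses verified, Lemma~\ref{lemma:strongdownwards} gives $|\intervalStr{D}| \geq |I| = 2^{\aleph_0}$, as required. What I find attractive about this plan is that it handles every $D \subseteq T_{0,2}$ at once with no case split (in particular, no separate appeal to Lemma~\ref{lemma:diffconstants} for subclones missing $c_0$): the relational separation packaged in Corollary~\ref{corollary:subT02:ITdifferent}, paired with the ``hungry'' strong partial class $T = \cPol(\{0\}, \emptyset)$, transports a continuum-sized family of strong partial clones from $\intervalStr{T_{0,2}}$ down to every subclone $D$ in a single pass. The main obstacle has already been absorbed into Corollary~\ref{corollary:subT02:ITdifferent}, whose proof had to construct, for each failure $f \notin \pPol \RTzerotwo_n$, a refined witness $F$ that additionally avoids $\0$ in its domain; once that corollary is in hand, the present statement is essentially a bookkeeping assembly.
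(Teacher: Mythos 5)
Your proposal is correct and coincides with the paper's own proof: the paper likewise sets $T := \cPol(\{0\},\emptyset)$, notes that conditions \ref{enum:strongdownwards:totalempty} and \ref{enum:strongdownwards:QiHungry} of Lemma~\ref{lemma:strongdownwards} follow from $T_{0,2} \subseteq T_0$ and Corollary~\ref{corollary:emptysetIsVeryHungry}, and invokes Corollary~\ref{corollary:subT02:ITdifferent} for condition \ref{enum:strongdownwards:different} with the same family $I$. No further comment is needed.
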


\begin{proof}
  By Corollary~\ref{corollary:subT02:ITdifferent} and the properties of $T$ mentioned at the beginning 
  of this subsection all conditions of Lemma~\ref{lemma:strongdownwards} hold, and therefore 
  $|\intervalStr{D}| \geq |I|$.
\end{proof}

\section{Continuum on $\Lambda$} \label{section:lambda}

From the results of the previous sections we see that the clones $\Lambda$, $\Lambda \cap T_1$, $V$, 
and $V \cap T_0$ are the only clones for which we need to determine the size of $\intervalStr{C}$.
We will show in this section that $\intervalStr{\Lambda}$ and $\intervalStr{\Lambda \cap T_1}$
have both the cardinality of the continuum. By symmetry of Post's lattice this implies the same statement
for $\intervalStr{V}$ and $\intervalStr{V \cap T_0}$.

Creignou, Kolaitis and Zanuttini have given in \cite{Creignou:2008:SIB:1435010.1435254} the set
of relations defining the smallest element in the interval $\intervalStr{C}$ for 
each Boolean clone $C$. They call these the plain basis. Since the least element in $\intervalStr{C}$
is $\Str{C}$ for each total clone $C$, we can conclude from \cite{Creignou:2008:SIB:1435010.1435254}
that 
\[
\Str{\Lambda} = \pPol \{ \lambda_k \mid k \geq 1 \}
\]
where $\lambda_k(y,x_1,\dots,x_k) \equiv (y \lor \lnot x_1 \lor \dots \lor \lnot x_k)$.
Equivalently, $\lambda_k = \2^{k+1} \setminus \{ (0,1,\dots,1) \}$.
The clone $\Lambda$ is denoted by $E$ in \cite{Creignou:2008:SIB:1435010.1435254},
and the plain basis can be found in the entry $IE$ of Table 2.

Any $n$-ary relation $\rho$ in the partial co-clone of $\Str{\Lambda}$ 
can be constructed from a selection of $\lambda_k$, 
i.e., there are (possibly empty) $k+1$-ary auxiliary relations $\gamma_k$ on $[n]$ for each $k \geq 1$ such that 
\begin{equation} \label{equation:rhodef}
  \rho(x_1,\dots,x_n) = \bigwedge_{k \geq 1} \bigwedge_{\ii \in \gamma_k} \lambda_k(x_{i_1},\dots,x_{i_{k+1}}).
\end{equation}
Since $\lambda_k$ is totally symmetric on the last $k$ coordinates, and 
$\lambda_k(y,x_1,\dots,x_k) = \lambda_{k+1}(y,x_1,x_1,\dots,x_k)$,
the tuples $\ii \in \gamma_k$ can be represented by pairs of the form $(i_1, \{ i_j \mid j \in [2,k+1] \})$.
This notation makes the symmetry of the relation more obvious, and exposes the special element more visibly.

For such pairs $(i, J)$ with $i \in [n]$ and $J \subseteq [n]$ we can define the $n$-ary relation
$\lambda^n_{(i,J)}$ by
\[
  \lambda^n_{(i,J)}(x_1,\dots,x_n) \equiv (x_i \lor \bigvee_{j \in J} \lnot x_j).
\]
We note that $\lambda^n_{(i,J)} = \2^n$ whenever $i \in J$, due to the tautology $x_i \lor \lnot x_i$ in the
definition of $\lambda^n_{(i,J)}$.

Let $\Gamma \subseteq \{ (i,J) \mid i \in [n], J \subseteq [n], i \notin J \}$. Then we define the relation
$\lambda^n_\Gamma \in \RelsBool[n]$ by
\[ \lambda^n_\Gamma := \bigwedge_{(i,J) \in \Gamma} \lambda^n_{(i,J)} (x_1,\dots,x_n). \]
Then equation \eqref{equation:rhodef} holds if and only if there is some suitable $\Gamma$ with
\[
  \rho(x_1,\dots,x_n) = \lambda^n_\Gamma.
\]

\begin{lemma}
  Let $i \in [n]$, $J \subseteq J' \subseteq [n]$. Then $\lambda^n_{(i,J)} \subseteq \lambda^n_{(i,J')}$.
\end{lemma}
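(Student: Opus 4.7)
The plan is to unfold the defining disjunctions and observe that enlarging the index set of a disjunction only makes the formula weaker, hence the corresponding relation larger. There is no substantial obstacle here — the lemma is a direct monotonicity property of the clause $x_i \lor \bigvee_{j \in J} \lnot x_j$ in the second argument.

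Concretely, I would take an arbitrary tuple $(x_1,\dots,x_n) \in \lambda^n_{(i,J)}$ and unpack the definition: this means $x_i = 1$ or there exists $j \in J$ with $x_j = 0$. In the first case $(x_1,\dots,x_n) \in \lambda^n_{(i,J')}$ trivially. In the second case, since $J \subseteq J'$, the same $j$ witnesses $j \in J'$ with $x_j = 0$, so $\bigvee_{j \in J'} \lnot x_j$ holds, hence $(x_1,\dots,x_n) \in \lambda^n_{(i,J')}$. Either way the containment follows.

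An alternative, even shorter, phrasing is to note that the propositional formula $x_i \lor \bigvee_{j \in J} \lnot x_j$ logically implies $x_i \lor \bigvee_{j \in J'} \lnot x_j$ because the right-hand side is obtained by adding extra disjuncts. Since the relations are exactly the truth-sets of these formulas in $\2^n$, the implication between formulas translates into the claimed inclusion between relations. I expect the whole proof to fit in two or three lines.
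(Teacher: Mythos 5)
Your argument is correct and is exactly the intended one: the paper's proof is simply ``Follows from the definition,'' and your unfolding of the clause $x_i \lor \bigvee_{j \in J} \lnot x_j$ and the observation that adding disjuncts only enlarges the truth set is precisely what that phrase abbreviates. Nothing further is needed.
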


\begin{proof}
  Follows from the definition.
\end{proof}


\subsection{Monsters}

\newcommand{\RLambda}{R^{\Lambda}}

In this subsection we will define relations $\RLambda_m$ for $m \geq 3$, which will be independent
and be used to show that $\intervalStr{\Lambda}$ has the cardinality of the continuum.
The relations $\RLambda_m$ will be called \emph{monsters}, as they ``kill'' this problem.

Let $m \geq 3$. We define $\Gamma_m \subseteq \{ (i,J) \mid i \in [m+1], J \subseteq [m+1], i \notin J \}$
and $\RLambda_m \in \RelsBool[m+1]$ by
\begin{align*}
  \Gamma_m & := \{ (1,[2,m+1]) \} \cup{} \\
           & \phantom{{}:={}} \{ (i,\{1,j_1,j_2\}) \mid i,j_1,j_2 \in [2,m+1], |\{i,j_1,j_2\}| = 3 \}, \\
  \RLambda_m & := \lambda^{m+1}_{\Gamma_m}.
  \intertext{A more visual represention of $\Gamma_4$ and $\RLambda_4$ is given in Table~\ref{table:Gamma4}.\newline
  Furthermore, we define the ternary relation $\RLambda_\Lambda$ by}
  \RLambda_\Lambda & := \{(0,0,0),(0,0,1),(0,1,0),(1,1,1) \}.
\end{align*}

\begin{table}
  \[
    \begin{array}{ll|ccccc}
      i & J & 1 & 2 & 3 & 4 & 5 \\ 
      \hline 
      1 & 2,3,4,5 & 0 & 1 & 1 & 1 & 1 \\
      2 & 1,3,4   & 1 & 0 & 1 & 1 &   \\
      2 & 1,3,5   & 1 & 0 & 1 &   & 1 \\
      2 & 1,4,5   & 1 & 0 &   & 1 & 1 \\
      3 & 1,2,4   & 1 & 1 & 0 & 1 &   \\
      3 & 1,2,5   & 1 & 1 & 0 &   & 1 \\
      3 & 1,4,5   & 1 &   & 0 & 1 & 1 \\
      4 & 1,2,3   & 1 & 1 & 1 & 0 &   \\
      4 & 1,2,5   & 1 & 1 &   & 0 & 1 \\
      4 & 1,3,5   & 1 &   & 1 & 0 & 1 \\
      5 & 1,2,3   & 1 & 1 & 1 &   & 0 \\
      5 & 1,2,4   & 1 & 1 &   & 1 & 0 \\
      5 & 1,2,5   & 1 &   & 1 & 1 & 0
    \end{array}
  \]
  \caption{Visual representation of $\Gamma_4$ and of the forbidden tuples in $\RLambda_4$. 
    For example, the condition $(2,\{1,4,5\})$ forbids the tuples $(1,0,x_3,1,1)$ for all
    $x_3 \in \2$. That means that $(1,0,0,1,1),(1,0,1,1,1) \notin \RLambda_4$.}
  \label{table:Gamma4}
\end{table}

As shown by Blochina in \cite{Blochina:1970:Posta} (see also Section~10.2~\cite{Lau:2006})
the relation $\RLambda_\Lambda$ characterizes the clone $\Lambda$, i.e.,
  \[ \Lambda = \Pol \RLambda_\Lambda. \]

Now we give some properties of the relations $\RLambda_m$.
\begin{lemma}
  Let $m \geq 3$. Then $\pPol \RLambda_m \subseteq \pPol \RLambda_\Lambda$.
\end{lemma}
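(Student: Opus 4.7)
The plan is to apply Romov's Definability Theorem (Theorem~\ref{theorem:definability}) with $\Sigma = \{\RLambda_m\}$ and $\rho = \RLambda_\Lambda$. First I would observe that $\RLambda_\Lambda = \{(0,0,0),(0,0,1),(0,1,0),(1,1,1)\}$ is irredundant: each pair of coordinates is distinguished by some tuple of the relation, and flipping any single coordinate of $(1,1,1)$ or $(0,1,0)$ produces a tuple outside the relation. Romov's criterion then reduces the lemma to exhibiting a finite $\gamma \subseteq [3]^{m+1}$ whose conjoined conditions $\xx_\ii \in \RLambda_m$ cut out exactly $\RLambda_\Lambda$ from $\2^3$ and whose index supports cover $[3]$.

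The natural candidate is $\gamma = \{\ii_1,\ii_2,\ii_3\}$ with
$\ii_1 = (2,1,1,\ldots,1)$, $\ii_2 = (3,1,1,\ldots,1)$, and $\ii_3 = (1,2,3,3,\ldots,3)$,
each of length $m+1$. Coverage $[3] = [\ii_1]\cup[\ii_2]\cup[\ii_3]$ is immediate. The key observation is that, by the construction of $\RLambda_m$, the only forbidden tuple with leading entry $0$ is $(0,1,\ldots,1)$; hence $\xx_{\ii_1} \in \RLambda_m \iff x_1 \leq x_2$, and symmetrically $\xx_{\ii_2} \in \RLambda_m \iff x_1 \leq x_3$. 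Together these two restrictions enforce $x_1 \leq x_2 \wedge x_3$, eliminating exactly the three forbidden triples $(1,0,0),(1,0,1),(1,1,0)$ of $\RLambda_\Lambda$. The last forbidden triple $(0,1,1)$ is killed by $\ii_3$: evaluating at $\xx=(0,1,1)$ yields $\xx_{\ii_3} = (0,1,1,\ldots,1)$, again forbidden.

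The main thing to verify carefully (though not technically deep) is that $\ii_3$ does not over-constrain, i.e.\ that $\xx_{\ii_3} \in \RLambda_m$ still holds for each of the four members of $\RLambda_\Lambda$. For the three tuples with $x_1 = 0$, the leading entry of $\xx_{\ii_3}$ is $0$, so the only forbidden pattern at issue is the all-ones tail; but at each of $(0,0,0),(0,0,1),(0,1,0)$ at least one of $x_2, x_3$ equals $0$, and both coordinates appear in the tail of $\xx_{\ii_3}$, so this pattern is avoided. For $(1,1,1)$ the tuple $\xx_{\ii_3}$ is the all-ones vector of length $m+1$, which is explicitly allowed in $\RLambda_m$. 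Once these routine checks are done, Romov's theorem immediately delivers the claimed inclusion $\pPol \RLambda_m \subseteq \pPol \RLambda_\Lambda$.
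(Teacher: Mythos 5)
Your proposal is correct and takes essentially the same route as the paper: the paper's proof likewise invokes Romov's definability theorem with exactly the index tuples $(2,1,\dots,1)$, $(3,1,\dots,1)$ and $(1,2,3,\dots,3)$, recovering $R^{\Lambda}_{\Lambda}$ as the conjunction of the two order constraints $x_1\le x_2$, $x_1\le x_3$ with the ternary trace $R^{\Lambda}_m(x_1,x_2,x_3,\dots,x_3)$. The only difference is presentational: you spell out the irredundancy of $R^{\Lambda}_{\Lambda}$ and the check that the conditions do not over-constrain, which the paper leaves implicit.
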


\begin{proof}
  We have the following connections:
  \begin{align*}
    \lambda^3_{\{(1,\{2,3\}), (2,\{1,3\})\}}(x_1,x_2,x_3) & = \RLambda_m(x_1,x_2,x_3,\dots,x_3) \\
    \lambda^2_{(1,\{2\})}(x_1,x_2) & = \RLambda_m(x_1,x_2,\dots,x_2) \\
    \lambda_1(x_1,x_2) & = \lambda^2_{(1,\{2\})}(x_1,x_2) \\
    \RLambda_\Lambda(x_1,x_2,x_3) & = \lambda^3_{\{(1,\{2,3\}), (2,\{1,3\})\}}(x_1,x_2,x_3) \land \\
    & \phantom{{}={}} \lambda_1(x_2,x_1) \land \lambda_1(x_3,x_1)
  \end{align*}
  Thus $\RLambda_\Lambda$ is constructible from $\RLambda_m$ as
  \[
  \RLambda_\Lambda = \{ \xx \in \2^3 \mid \xx_{(1,2,3,\dots,3)}, \xx_{(2,1,\dots,1)}, \xx_{(3,1,\dots,1)} \in \RLambda_m \}.
  \]
  Thus $\pPol \RLambda_m \subseteq \pPol \RLambda_\Lambda$ by Theorem~\ref{theorem:definability}.
\end{proof}

\begin{corollary}
  Let $m \geq 2$. Then $\Pol \RLambda_m = \Lambda$, i.e., $\pPol \RLambda_m \in \intervalStrUp{\Lambda}$. 
\end{corollary}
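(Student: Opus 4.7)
The plan is to combine the preceding lemma with a direct verification to establish both inclusions $\Pol \RLambda_m \subseteq \Lambda$ and $\Lambda \subseteq \Pol \RLambda_m$. The first inclusion is immediate: intersecting the lemma's inclusion $\pPol \RLambda_m \subseteq \pPol \RLambda_\Lambda$ with $\OpBool$ yields $\Pol \RLambda_m \subseteq \Pol \RLambda_\Lambda = \Lambda$, the final equality being the Blochina characterization of $\Lambda$ recalled just before the monsters are introduced.

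For the reverse inclusion $\Lambda \subseteq \Pol \RLambda_m$, I would exploit the structure of $\RLambda_m$ as a conjunction of clauses $\lambda^{m+1}_{(i,J)}$ with $(i,J)\in\Gamma_m$. Each such clause is logically equivalent to the implication $\bigwedge_{j\in J} x_j \to x_i$, i.e.\ a (definite) Horn clause. Since $\Lambda = \clone\{\land,c_0,c_1\}$ it suffices to check that $\land$, $c_0$, and $c_1$ preserve every clause appearing in $\Gamma_m$. For the constants, the tuple $\0$ satisfies each clause because every $J$ occurring in $\Gamma_m$ is non-empty, so some negated literal $\lnot x_j$ evaluates to $1$; and $\1$ satisfies each clause through the positive literal $x_i$. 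For $\land$, take $\xx,\yy\in\RLambda_m$ and consider a clause $\lambda^{m+1}_{(i,J)}$ evaluated at $\xx\land\yy$: if some $j\in J$ has $x_j\land y_j=0$ the clause is already satisfied by $\lnot(x_j\land y_j)$; otherwise $x_j=y_j=1$ for all $j\in J$, and since both $\xx$ and $\yy$ satisfy $\lambda^{m+1}_{(i,J)}$ we conclude $x_i=y_i=1$, hence $(x_i\land y_i)=1$.

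Combining the two inclusions gives $\Pol \RLambda_m = \Lambda$. Since $\pPol \RLambda_m$ is a strong partial clone in general (being of the form $\cPol(\rho,\rho)$) and has total part $\Lambda$, it lies in $\intervalStrUp{\Lambda}$, which is the claim.

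There is no genuine obstacle here once the Horn-clause reading of $\RLambda_m$ is in hand; the argument is a routine preservation check. The only small bookkeeping point worth flagging is the mismatch between the hypothesis $m\geq 3$ of the preceding lemma and the hypothesis $m\geq 2$ of the corollary. For $m=2$ the second family of triples in $\Gamma_m$ is empty, so $\RLambda_2$ reduces to the single clause $(1,\{2,3\})$; the inclusion $\Pol\RLambda_2\subseteq\Lambda$ is then immediate from this clause, and the preservation argument above goes through verbatim, so the corollary holds uniformly.
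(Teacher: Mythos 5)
Your proof is correct, and half of it coincides with the paper's: for $\Pol \RLambda_m \subseteq \Lambda$ you, like the paper, intersect the preceding lemma's inclusion $\pPol \RLambda_m \subseteq \pPol \RLambda_\Lambda$ with $\OpBool$ and invoke Blochina's characterization $\Pol \RLambda_\Lambda = \Lambda$. For the reverse inclusion the paper argues differently: it observes that $\RLambda_m$ is by construction a conjunction of the clauses $\lambda_k$, which by the Creignou--Kolaitis--Zanuttini plain-basis result satisfy $\pPol\{\lambda_k \mid k \geq 1\} = \Str{\Lambda}$, so it obtains the slightly stronger fact $\Str{\Lambda} \subseteq \pPol \RLambda_m$ in one line. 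Your route is a direct, self-contained preservation check of the generators $\land$, $c_0$, $c_1$ against the Horn clauses $\lambda^{m+1}_{(i,J)}$; it is entirely correct (the key step, that $\land$ preserves a definite Horn clause, is verified properly) and avoids leaning on the cited plain-basis theorem, at the cost of only yielding the total part, which is all the corollary asserts. Your flag on the hypothesis is also well taken: $\RLambda_m$ is defined only for $m \geq 3$, so the ``$m \geq 2$'' is best read as a typo; your patch for $m = 2$ (where $\Gamma_2$ degenerates to the single clause $(1,\{2,3\})$, i.e.\ $\RLambda_2 = \lambda_2$) is sound, though ``immediate'' is a slight overstatement for $\Pol \lambda_2 \subseteq \Lambda$ --- one still needs that $\lor \notin \Pol \lambda_2$ and that no clone lies strictly between $\Lambda$ and $M$ in Post's lattice --- and note that the preceding lemma you invoke for the other inclusion is itself only proved for $m \geq 3$ (its construction uses the clause $(2,\{1,3\})$, absent from $\Gamma_2$), so the $m=2$ case would need that separate argument as well.
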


\begin{proof}
  Since $\RLambda_m$ can be constructed from $\{ \lambda_k \mid k \geq 1 \}$ and 
  $\pPol \{ \lambda_k \mid k \geq 1 \} = \Str{\Lambda}$ we have $\Lambda \subseteq \pPol \RLambda_m$,
  and thus $\Lambda \subseteq \pPol \RLambda_m \cap \OpBool$.

  On the other hand we have $\pPol \RLambda_m \subseteq \pPol \RLambda_\Lambda$ and thus 
  $\pPol \RLambda_m \cap \OpBool \subseteq \pPol \RLambda_\Lambda \cap \OpBool = \Pol \RLambda_\Lambda = \Lambda$. 
\end{proof}

\begin{lemma} \label{lemma:RLambda:properties}
  Let $m \geq 3$. Then the following properties hold.
  \begin{enumerate}[label=(\roman*)]
    \item \label{enum:RLambda:properties:1inR}
      $ (1,\dots,1) \in \RLambda_m$. 
    \item \label{enum:RLambda:properties:11011notinR}
      $(1,\dots,1,\atPos{i}{0},1,\dots,1) \notin \RLambda_m$ for all $i \in [m+1]$.
    \item \label{enum:RLambda:properties:0xinR}
      $ \{ 0 \} \times (\2^m \setminus \{(1,\dots,1)\}) \subseteq \RLambda_m$.
    \item \label{enum:RLambda:properties:00100100inR}
      $ (0,\dots,0,\atPos{i}{1},0,\dots,0,\atPos{j}{1},0,\dots,0) \in \RLambda_m$ for all 
      $i,j \in [m+1]$ with $i < j$.
  \end{enumerate}
\end{lemma}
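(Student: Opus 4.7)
The plan is a direct combinatorial verification from the definition $\RLambda_m = \lambda^{m+1}_{\Gamma_m}$. A tuple $\xx \in \2^{m+1}$ lies in $\RLambda_m$ iff for every $(i,J) \in \Gamma_m$ the clause $x_i \lor \bigvee_{j \in J} \lnot x_j$ holds; equivalently, $\xx$ violates $(i,J)$ exactly when $x_i = 0$ and $x_j = 1$ for every $j \in J$. Thus each of (i), (iii), (iv) reduces to showing that no pair in $\Gamma_m$ is violated by the given tuple, while (ii) reduces to exhibiting a single violating pair.

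Parts (i) and (iii) I expect to dispatch immediately. For (i), the all-ones tuple satisfies $x_i = 1$ for every $i$, so every clause is trivially true. For (iii), every $(i,J) \in \Gamma_m$ other than $(1,[2,m+1])$ has $1 \in J$, so the hypothesis $x_1 = 0$ forces $\lnot x_1 = 1$ and discharges all such clauses; the distinguished clause $(1,[2,m+1])$ holds because the assumption $(x_2,\dots,x_{m+1}) \neq (1,\dots,1)$ provides some $j \in [2,m+1]$ with $\lnot x_j = 1$.

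For (ii), I split on the position $i$ of the unique zero. If $i = 1$, then $(1,[2,m+1]) \in \Gamma_m$ is violated by direct inspection. If $i \in [2,m+1]$, I use the hypothesis $m \geq 3$ to pick two distinct indices $j_1, j_2 \in [2,m+1] \setminus \{i\}$; then the pair $(i,\{1,j_1,j_2\}) \in \Gamma_m$ is violated by $\xx$.

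Part (iv) I expect to be the most delicate, and the real bookkeeping obstacle; here $\xx$ has $1$'s exactly at positions $i < j$. The first clause $(1,[2,m+1])$ is satisfied because either $x_1 = 1$ (the case $i = 1$) or, when $i \geq 2$, the set $[2,m+1] \setminus \{i,j\}$ is non-empty thanks to $m \geq 3$ and supplies a coordinate with $\lnot x_k = 1$. For any other clause $(k,\{1,j_1,j_2\}) \in \Gamma_m$, I argue by contradiction: a violation would force $x_k = 0$ (hence $k \notin \{i,j\}$), $x_1 = 1$ (hence $i = 1$), and $x_{j_1} = x_{j_2} = 1$, forcing $\{j_1,j_2\} \subseteq \{i,j\} \setminus \{1\} = \{j\}$ and contradicting $j_1 \neq j_2$. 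The hypothesis $m \geq 3$ is used in precisely the two places flagged above (to have enough room in $[2,m+1]\setminus\{i\}$ in (ii), and in $[2,m+1]\setminus\{i,j\}$ in (iv)), and no further ingredient is needed.
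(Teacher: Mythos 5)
Your proposal is correct and follows essentially the same route as the paper: a direct clause-by-clause verification of which pairs $(i,J)\in\Gamma_m$ are satisfied or violated by each tuple. The only cosmetic difference is in (iv), where the paper uses the single uniform observation that every $J$ occurring in $\Gamma_m$ has at least three elements while the tuple has only two $1$'s (so some $j\in J$ gives $\lnot x_j=1$), whereas you reach the same conclusion by a short case analysis; both are valid.
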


\begin{proof} \mbox{}
  \begin{itemize}
    \item[\ref{enum:RLambda:properties:1inR}] 
      Since $\1 \in \lambda_{(i,J)}$ for any $i$ and $J$, we have $\1 \in \RLambda_m$.

    \item[\ref{enum:RLambda:properties:11011notinR}]
      If $i = 1$, then $(0,1,\dots,1) \notin \lambda^{m+1}_{(1,[2,m+1])} \supseteq \RLambda_m$. Otherwise, 
      if $\xx = (1,\dots,1,0,1,\dots,1)$ then $\xx_{(i,1,j_1,j_2)} = (0,1,1,1) \notin 
      \lambda^4_{(i,\{1,j_1,j_2\})}$. Thus $\xx \notin \lambda^{m+1}_{(i,\{1,j_1,j_2\})} \supseteq \RLambda_m$.

    \item[\ref{enum:RLambda:properties:0xinR}]
      By the definition of $\lambda^{m+1}_{(i,J)}$ we see that $(0,x_2,\dots,x_{m+1}) \in \lambda^{m+1}_{(i,J)}$
      if $1 \in J$. Thus 
      \begin{align*}
        (\{ 0 \} \times \2^m) \cap \RLambda_m & = (\{ 0 \} \times \2^m) \cap \lambda^{m+1}_{(1,[2,m+1])} \\
        & = \{ 0 \} \times (\2^m \setminus \{(1,\dots,1)\}).
      \end{align*}

    \item[\ref{enum:RLambda:properties:00100100inR}]
      Let $\xx = (0,\dots,0,1,0,\dots,0,1,0,\dots,0)$. Since the set $J$ contains at least three elements 
      for every condition $\lambda^{m+1}_{(i,J)}$ in the construction of $\RLambda_m$, there
      is some $j \in J$ with $\xx_j = 0$. Thus $\xx \in \lambda^{m+1}_{(i,J)}$, 
      and consequently $\xx \in \RLambda_m$.\qedhere
  \end{itemize}
\end{proof}

\subsection{Monsters are good}

Similar to the case of $T_{0,2}$ we want to show that there are continuum many strong partial clones
with total part equal to $\Lambda$.

Let $\hat{\IN} := \{ n \in \IN \mid n \geq 3 \}$.
Let $n \in \hat{\IN}$ and $M \subseteq \hat{\IN} \setminus \{n\}$ for the rest of this section. We want
to show that 
\begin{equation} \label{equation:monstersAim}
  \pPol \RLambda_n \not\supseteq \bigcap_{m \in M} \pPol \RLambda_m
\end{equation}
holds. We assume to the contrary, that \eqref{equation:monstersAim} is false. 
This means that by Theorem~\ref{theorem:definability} we can write
\begin{equation} \label{equation:failingConstructionOfRnLambda}
  \RLambda_n := \{ \xx \in \2^{n+1} \mid \xx_\ii \in \RLambda_m \text{ for all } \ii \in \gamma_m \text{ and } m \in M \}
\end{equation}
with some auxiliary relations $\gamma_m$ for all $m \in M$.
Furthermore, we can assume that no condition is superfluous.

\begin{lemma} \label{lemma:gammamNoIdents}
  Let $m \geq 3$, $\ii \in \gamma_m$, and distinct $j,j' \in [m+1]$.
  Then $i_j \neq i_{j'}$.
\end{lemma}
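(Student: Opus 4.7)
The plan is to argue by contradiction. Suppose $\ii \in \gamma_m$ satisfies $i_j = i_{j'}$ for some distinct $j, j' \in [m+1]$. I will either exhibit a tuple $\xx \in \RLambda_n$ with $\xx_\ii \notin \RLambda_m$ (contradicting the clause indexed by $\ii$ in \eqref{equation:failingConstructionOfRnLambda}), or show that this clause is implied by the others (contradicting the minimality assumption that no condition is superfluous). The essential inputs are Lemma~\ref{lemma:RLambda:properties}\ref{enum:RLambda:properties:00100100inR}, which guarantees that any Boolean tuple of Hamming weight at most $2$ lies in $\RLambda_n$, and a direct inspection of the forbidden tuples of $\RLambda_m$: besides $(0,1,\dots,1)$, they are exactly the tuples having first coordinate $1$ and strictly between $1$ and $m$ ones among the last $m$ coordinates. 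My aim is to force $\xx_\ii$ into this second family.

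Set $V := \{i_1, \dots, i_{m+1}\}$. In the generic case $|V| \geq 3$, I take $p := i_1$ and pick $q \in V \setminus \{p\}$: if $j, j' \in [2,m+1]$ I let $q$ be the repeated value $i_j = i_{j'}$, and if instead $j = 1$ (so $i_{j'} = i_1 = p$) I choose $q$ to be any element of $V \setminus \{i_1\}$. Let $\xx$ have $x_p = x_q = 1$ and zeros elsewhere; then $\xx \in \RLambda_n$ by~\ref{enum:RLambda:properties:00100100inR}. Now $x_{i_1} = 1$, and the construction of $q$ together with the repeat $i_j = i_{j'}$ guarantees at least two coordinates $x_{i_k}$ with $k \in [2,m+1]$ equal to $1$, while $|V| \geq 3$ supplies some $l \in [2,m+1]$ with $i_l \notin \{p, q\}$, giving $x_{i_l} = 0$. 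Hence $\xx_\ii$ has first coordinate $1$, at least two ones, and at least one zero among positions $[2, m+1]$, so $\xx_\ii \notin \RLambda_m$, the desired contradiction.

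The main obstacle is the degenerate case $|V| \leq 2$, where the preceding construction collapses because every coordinate of $\xx_\ii$ is forced into at most two distinct values. In that situation write $V = \{a, b\}$ (allowing $a = b$), and observe that $\xx_\ii$ depends on $\xx$ only through $(x_a, x_b) \in \2^2$. Using properties~\ref{enum:RLambda:properties:0xinR} and~\ref{enum:RLambda:properties:00100100inR}, each of the (at most) four value combinations for $(x_a, x_b)$ is realised by some $\xx \in \RLambda_n$. Consequently either some combination yields $\xx_\ii \notin \RLambda_m$ (contradiction), or all four yield $\xx_\ii \in \RLambda_m$, in which case the clause $\xx_\ii \in \RLambda_m$ holds identically on $\2^{n+1}$ and $\ii$ could be deleted from $\gamma_m$ without altering the relation defined in \eqref{equation:failingConstructionOfRnLambda}, contradicting the standing minimality assumption.
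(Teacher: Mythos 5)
Your proof is correct and rests on the same two pillars as the paper's: exhibit a Hamming-weight-$\le 2$ tuple of $\RLambda_n$ whose $\ii$-image falls outside $\RLambda_m$, or else conclude that the clause indexed by $\ii$ is superfluous, contradicting the minimality assumption attached to \eqref{equation:failingConstructionOfRnLambda}. The organization, however, is genuinely different in the degenerate case: the paper splits on $|[\ii]|\in\{1,2,{\ge}3\}$ and, for $|[\ii]|=2$, further on the size of the identification class of position $1$, picking a bespoke constraint $\lambda^{m+1}_{(z,Y)}$ in each sub-case; your observation that for $|[\ii]|\le 2$ the clause factors through $(x_a,x_b)\in\2^2$, that all four combinations are realized by tuples of $\RLambda_n$, and that therefore the clause is either violated or identically true, collapses three sub-cases into one uniform argument. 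The price is that you rely on the explicit description of $\2^{m+1}\setminus\RLambda_m$ (which you state correctly: $(0,1,\dots,1)$ together with the tuples having first coordinate $1$ and between $2$ and $m-1$ ones in the tail). Two small points to tighten. First, Lemma~\ref{lemma:RLambda:properties}\ref{enum:RLambda:properties:00100100inR} covers only weight exactly $2$; for weights $0$ and $1$ you need \ref{enum:RLambda:properties:0xinR} plus a one-line check that $(1,0,\dots,0)\in\RLambda_n$ --- or simply your own description of the forbidden tuples, all of which have weight at least $3$. Second, in your generic case the repeated pair may lie in $[2,m+1]$ while the repeated value equals $i_1$, so that your $q$ coincides with $p$; either note that $(1,j)$ is then also a repeated pair and defer to your other sub-case, or observe that the resulting weight-$1$ witness still works because $|V|\ge 3$ still supplies the required zero coordinate. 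Neither issue affects the soundness of the argument.
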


\begin{proof}
  Assume to the contrary that there are distinct $j,j' \in [m+1]$ with
  $i_j = i_{j'}$.

  There are a few cases distinguished by the size of the set $[\ii]$.
  For each $x \in [m+1]$ let $t_x := \{ y \in [m+1] \mid i_y = i_x \}$.
  \begin{itemize}
    \item
      $|\ii| = 1$. Since $(0,\dots,0),(1,\dots,1) \in \RLambda_m$ the condition $\xx_\ii \in \RLambda_m$ is
      superfluous in contradiction to the assumption for \eqref{equation:failingConstructionOfRnLambda}.
    \item
      $|\ii| = 2$. We have three subcases.
      \begin{itemize}
        \item
          $|t_1| = 1$. Then set $\xx := (0,\dots,0,\atPos{i_2}{1},0,\dots,0) \in \RLambda_n$. But we have
          $\xx_\ii = (0,1,\dots,1) \notin \RLambda_m$, i.e., this case can not appear in the construction
          of $\RLambda_n$.
        \item
          $|t_1| = 2$. For each constraint $\lambda_{x,Y}$ in the construction of $\RLambda_m$ we have
          some $y \in Y \setminus t_1$. Thus $\{x,y\} \subseteq [m+1] \setminus t_1$, i.e.,
          these coordinates get identified. Therefore this constraint is superfluous.

          Since this holds for every such constraint the complete condition $\xx_{\ii} \in \RLambda_m$ is superfluous.
        \item
          $|t_1| \geq 3$. Let $\{1,y_2,y_3\} := Y \subseteq t_1$ with $1 \in Y$ and $|Y|=3$.
          Let $z := \min ([m+1] \setminus t_1)$, 
          and define $\xx := (0,\dots,0,\atPos{i_1}{1},0,\dots,0) \in \RLambda_n$.

          From $\RLambda_m \subseteq \lambda^{m+1}_{z,Y}$ and 
          $(\xx_{\ii})_{(z,1,y_2,y_3)} = (0,1,1,1) \notin \lambda^4_{1,\{2,3,4\}}$ follows that
          $\xx_{\ii} \notin \RLambda_m$.
          This contradicts $\xx \in \RLambda_n$ and therefore this case can not happen.
      \end{itemize}
    \item 
      $|\ii| \geq 3$. 

      Since there are distinct $j,j' \in [m+1]$ with $i_j \neq i_{j'}$
      there is some $x \in [m+1]$ with $|t_1 \cup t_x| \geq 3$.

      Let $t' := t_1 \cup t_x$. Since $|\ii| \geq 3$ we have $t' \neq [\ii]$, and thus the proof
      for $|\ii|=2$ and $|t_1| \geq 3$ works if we replace $t_1$ by $t'$.
  \end{itemize}
\end{proof}

\begin{corollary} \label{corollary:gammamEmptyForBigM}
  Let $m > n$. Then $\gamma_m = \emptyset$.
\end{corollary}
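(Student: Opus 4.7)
The plan is to read this directly off the preceding Lemma~\ref{lemma:gammamNoIdents} and a pigeonhole observation about the arities. Recall that $\RLambda_n$ has arity $n+1$, while $\RLambda_m$ has arity $m+1$, so each $\ii \in \gamma_m$ is a tuple in $[n+1]^{m+1}$, i.e.\ a sequence of $m+1$ coordinates drawn from the $n+1$ coordinates of $\RLambda_n$.

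First I would invoke Lemma~\ref{lemma:gammamNoIdents}: for any $\ii \in \gamma_m$ the entries $i_1,\dots,i_{m+1}$ are pairwise distinct. This means $\ii$ represents an injection $[m+1] \to [n+1]$. But when $m > n$ we have $m+1 > n+1$, so no such injection exists. Hence $\gamma_m$ must be empty.

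I expect no real obstacle here --- the hypothesis $m > n$ together with the non-repetition of indices established in the previous lemma forces the conclusion by a one-line pigeonhole argument. The only thing to be a little careful about is bookkeeping of arities (making sure one counts $m+1$ and $n+1$ rather than $m$ and $n$), but no further structural properties of $\RLambda_m$ or of the construction~\eqref{equation:failingConstructionOfRnLambda} need to be used.
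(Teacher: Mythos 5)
Your proof is correct and is exactly the intended argument: the paper states this as an immediate corollary of Lemma~\ref{lemma:gammamNoIdents} without further proof, relying on precisely the pigeonhole observation that a tuple $\ii \in [n+1]^{m+1}$ with pairwise distinct entries cannot exist when $m+1 > n+1$. Your bookkeeping of the arities ($n+1$ and $m+1$) is also correct.
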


\begin{lemma} \label{lemma:gammaMNoOneForSmallM}
  Let $m < n$ and $\ii \in \gamma_m$. Then $1 \notin [\ii]$.
\end{lemma}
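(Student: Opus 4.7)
The plan is to argue by contradiction. Assume some $\ii \in \gamma_m$ satisfies $1 \in [\ii]$, and produce an $\xx \in \RLambda_n$ with $\xx_\ii \notin \RLambda_m$, contradicting the decomposition \eqref{equation:failingConstructionOfRnLambda}. By Lemma~\ref{lemma:gammamNoIdents} the entries of $\ii$ are pairwise distinct, so there is a unique $j^* \in [m+1]$ with $i_{j^*} = 1$. Since the defining family $\Gamma_m$ treats the first coordinate of $\RLambda_m$ asymmetrically from the remaining coordinates, I expect the cases $j^* = 1$ and $j^* \geq 2$ to need different witnesses.

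The hypothesis $m < n$ enters through the spare-coordinate trick: $|[\ii]| = m+1 < n+1$, so there exists $k \in [n+1] \setminus [\ii]$, necessarily with $k \geq 2$. Placing a $0$ at position $k$ does not disturb the projection $\xx_\ii$, and together with $x_1 = 0$ it lets me apply Lemma~\ref{lemma:RLambda:properties}\ref{enum:RLambda:properties:0xinR} to certify $\xx \in \RLambda_n$.

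If $j^* = 1$, I would take $\xx$ to be the indicator of $\{i_2, \dots, i_{m+1}\}$; then $x_1 = x_k = 0$, so $\xx \in \RLambda_n$, whereas $\xx_\ii = (0, 1, \dots, 1) \notin \RLambda_m$ by Lemma~\ref{lemma:RLambda:properties}\ref{enum:RLambda:properties:11011notinR} with $i = 1$. If $j^* \geq 2$, then $i_1 \neq 1$, and since $m \geq 3$ I can pick distinct $j_1, j_2 \in [2, m+1] \setminus \{j^*\}$. I take $\xx$ to be the indicator of $\{i_1, i_{j_1}, i_{j_2}\}$; all three positions lie in $[2, n+1]$, so $x_1 = 0$, and since $n \geq 4$ the vector $(x_2, \dots, x_{n+1})$ has a zero, giving $\xx \in \RLambda_n$ by Lemma~\ref{lemma:RLambda:properties}\ref{enum:RLambda:properties:0xinR}. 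The projection then satisfies $(\xx_\ii)_1 = 1$, $(\xx_\ii)_{j^*} = x_1 = 0$, and $(\xx_\ii)_{j_1} = (\xx_\ii)_{j_2} = 1$, which directly violates the constraint $\lambda^{m+1}_{(j^*, \{1, j_1, j_2\})}$ sitting in $\Gamma_m$; hence $\xx_\ii \notin \RLambda_m$, the desired contradiction.

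The main obstacle lies in the second case, where the special first coordinate of $\RLambda_m$ is filled by a non-special coordinate of $\RLambda_n$, forcing me to hit a triple constraint $(j^*, \{1, j_1, j_2\}) \in \Gamma_m$ rather than the simpler forbidden pattern of the first case. The two hypotheses $m \geq 3$ (room to pick $j_1 \neq j_2$ both distinct from $j^*$) and $m < n$ (room to place a $0$ outside $\{i_1, i_{j_1}, i_{j_2}\}$ to keep $\xx \in \RLambda_n$) are jointly essential and both get invoked precisely here.
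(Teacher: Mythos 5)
Your proof is correct and takes essentially the same route as the paper's: the same case split on whether the special first coordinate of $\RLambda_m$ is matched to position $1$ or to some position $i_{j^*}=1$ with $j^*\geq 2$, with the same indicator-vector witnesses (of $\{i_2,\dots,i_{m+1}\}$, respectively of $\{i_1,i_{j_1},i_{j_2}\}$) violating the same constraints $(1,[2,m+1])$ and $(j^*,\{1,j_1,j_2\})$. You are merely a bit more explicit than the paper about why the witness lies in $\RLambda_n$ and where $m<n$ and $m\geq 3$ are used.
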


\begin{proof}
  Assume to the contrary, that $1 \in [\ii]$. By Lemma~\ref{lemma:gammamNoIdents} there are no identifications,
  i.e., $i_j \neq i_{j'}$ for all distinct $j,j' \in [m+1]$.

  There are two cases
  \begin{itemize}
    \item 
      $i_1 = 1$. We may assume w.l.o.g. that $i_x = x$ for all $x \in [m+1]$.

      We define $\xx := (0,\underbrace{1,\dots,1}_m,0,\dots,0)$. Then $\xx_{\ii} = (0,1,\dots,1) \notin \RLambda_m$
      but $\xx \in \RLambda_n$. Thus this contradicts \eqref{equation:failingConstructionOfRnLambda}.

    \item
      $i_j = 1$ for some $j \in [2,m+1]$. W.l.o.g. let $j = 2$. 
      
      Let $\{u_1,u_2,u_3\} := \{ i_1,i_3,i_4 \}$ with $u_1 < u_2 < u_3$ and define
      \[ \xx := (0,0,\dots,0,\atPos{u_1}{1},0,\dots,0,\atPos{u_2}{1},0,\dots,0,\atPos{u_3}{1},0,\dots,0). \]
      Then $\xx_{\ii} = (1,0,1,1,0,\dots,0) \notin \RLambda_m$ since $\RLambda_m \subseteq \lambda^{m+1}_{2,\{1,3,4\}}$.
      But $\xx \in \RLambda_n$. Thus this contradicts \eqref{equation:failingConstructionOfRnLambda}.
  \end{itemize}
\end{proof}

\begin{theorem} \label{theorem:Lambda:nonconstructible}
  Let $n \in \hat{\IN}$ and $M \subseteq \hat{\IN} \setminus \{n\}$.
  
  Then $\pPol \RLambda_n \not\supseteq \bigcap_{m \in M} \pPol \RLambda_m$.
\end{theorem}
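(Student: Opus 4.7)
The plan is to argue by contradiction using Romov's definability lemma (Theorem~\ref{theorem:definability}). Suppose $\pPol \RLambda_n \supseteq \bigcap_{m \in M} \pPol \RLambda_m$. To invoke Theorem~\ref{theorem:definability} with $\rho = \RLambda_n$, I first need to verify that $\RLambda_n$ is irredundant. Non-fictitiousness of each coordinate $i \in [n+1]$ is immediate from Lemma~\ref{lemma:RLambda:properties}\ref{enum:RLambda:properties:1inR} and \ref{enum:RLambda:properties:11011notinR}, since $(1,\dots,1) \in \RLambda_n$ but $(1,\dots,1,\atPos{i}{0},1,\dots,1) \notin \RLambda_n$. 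For distinguishability of any two coordinates $i < j$, the tuple $(0,\dots,0,\atPos{j}{1},0,\dots,0)$ lies in $\RLambda_n$ by Lemma~\ref{lemma:RLambda:properties}\ref{enum:RLambda:properties:0xinR}, separating positions $i$ and $j$.

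Once irredundancy is in hand, Theorem~\ref{theorem:definability} yields relations $\gamma_m \subseteq [n+1]^{m+1}$ (for $m \in M$) such that \eqref{equation:failingConstructionOfRnLambda} holds and, crucially, the coverage condition
\[
  [n+1] = \bigcup_{m \in M} \bigcup_{\ii \in \gamma_m} [\ii]
\]
is satisfied. The rest of the argument is a short bookkeeping step using the two technical results already established. By Corollary~\ref{corollary:gammamEmptyForBigM}, $\gamma_m = \emptyset$ for every $m > n$. Since $n \notin M$ by hypothesis, the only $m \in M$ potentially contributing non-empty $\gamma_m$ are those with $m < n$. For each such $m$ and each $\ii \in \gamma_m$, Lemma~\ref{lemma:gammaMNoOneForSmallM} guarantees $1 \notin [\ii]$.

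Combining these two observations, the index $1$ fails to appear in any $[\ii]$ for any $m \in M$, so
\[
  1 \notin \bigcup_{m \in M} \bigcup_{\ii \in \gamma_m} [\ii],
\]
contradicting the coverage condition $[n+1] = \bigcup_{m \in M} \bigcup_{\ii \in \gamma_m} [\ii]$. This contradiction completes the proof. The main conceptual obstacle was already handled in Lemmas~\ref{lemma:gammamNoIdents}--\ref{lemma:gammaMNoOneForSmallM}: the ``special'' first coordinate of $\RLambda_n$ has been engineered precisely so that no low-arity or high-arity monster can imitate its role, leaving the remaining assembly step a direct verification.
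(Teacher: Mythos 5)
Your proof is correct and follows essentially the same route as the paper: assume the inclusion, apply Romov's definability lemma, use Corollary~\ref{corollary:gammamEmptyForBigM} and Lemma~\ref{lemma:gammaMNoOneForSmallM} to conclude that coordinate $1$ appears in no $[\ii]$, and derive a contradiction (the paper phrases this as $x_1$ being inessential on the right-hand side of \eqref{equation:failingConstructionOfRnLambda}, which is the same thing as your violation of the coverage condition). Your explicit verification that $\RLambda_n$ is irredundant is a small but welcome addition that the paper leaves implicit.
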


\begin{proof}
  From Corollary~\ref{corollary:gammamEmptyForBigM} and Lemma~\ref{lemma:gammaMNoOneForSmallM} follows
  that $1 \notin [\ii]$ for all $\ii \in \gamma_m$ and $m \in M$.
  Thus in the right hand side of \eqref{equation:failingConstructionOfRnLambda} 
  the variable $x_1$ is inessential. 
  But this contradicts the fact, that this variables is essential in $\RLambda_n$.
  Therefore \eqref{equation:failingConstructionOfRnLambda} is not true, and by Theorem~\ref{theorem:definability}
  follows the statement of this theorem.
\end{proof}

\begin{corollary}
  Let $X,Y \subseteq \hat{\IN}$ non-empty sets. Then
  \[
  \bigcap_{n \in X} \pPol \RLambda_n = \bigcap_{m \in Y} \pPol \RLambda_m \iff X = Y.
  \]
\end{corollary}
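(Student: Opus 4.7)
The forward implication ($X = Y$ implies equality of the intersections) is immediate from the definition of the intersection. The substance of the corollary lies in the contrapositive of the reverse implication, and this is essentially a packaging of Theorem~\ref{theorem:Lambda:nonconstructible}. My plan is to argue by contradiction, pick a witness index in the symmetric difference of $X$ and $Y$, and invoke the non-constructibility theorem.

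Concretely, suppose $X \neq Y$. Without loss of generality (the argument is symmetric in $X$ and $Y$) there exists some $n \in X \setminus Y$. Then $Y \subseteq \hat{\IN} \setminus \{n\}$, so Theorem~\ref{theorem:Lambda:nonconstructible} with $M := Y$ gives
\[
  \pPol \RLambda_n \not\supseteq \bigcap_{m \in Y} \pPol \RLambda_m.
\]
On the other hand, since $n \in X$, we trivially have
\[
  \bigcap_{n' \in X} \pPol \RLambda_{n'} \subseteq \pPol \RLambda_n.
\]
If the two intersections over $X$ and $Y$ coincided, combining these would yield $\bigcap_{m \in Y} \pPol \RLambda_m \subseteq \pPol \RLambda_n$, contradicting the displayed non-inclusion.

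There is essentially no obstacle here: all the combinatorial work has been absorbed into Theorem~\ref{theorem:Lambda:nonconstructible} via Lemmas~\ref{lemma:gammamNoIdents}, \ref{lemma:gammaMNoOneForSmallM} and Corollary~\ref{corollary:gammamEmptyForBigM}. The only minor subtlety is that $Y$ may be empty when $X = \{n\}$; but in that case $\bigcap_{m \in \emptyset} \pPol \RLambda_m = \PA$, and the non-inclusion still holds since $\pPol \RLambda_n$ is a proper subset of $\PA$ (for instance, it does not contain every total function, as $\RLambda_n \neq \2^{n+1}$). Hence the corollary follows.
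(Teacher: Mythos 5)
Your argument is correct and is exactly the deduction the paper intends: the corollary is stated as an immediate consequence of Theorem~\ref{theorem:Lambda:nonconstructible}, obtained by picking $n$ in the symmetric difference of $X$ and $Y$ and applying the theorem with $M := Y$ (or $M := X$). Your aside about $Y = \emptyset$ is harmless but unnecessary, since the corollary already assumes both sets are non-empty.
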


From this follows that $I$ has continuum cardinality and with $I \subseteq \intervalStr{\Lambda}$
we obtain the following statement.

\begin{theorem} \label{theorem:Lambdacontinuum}
  The interval $\intervalStr{\Lambda}$ has the cardinality of the continuum.
\end{theorem}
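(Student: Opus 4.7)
The plan is to exhibit continuum many pairwise distinct strong partial clones whose total part is $\Lambda$, indexed by the non-empty subsets of $\hat{\IN} = \{n \in \IN \mid n \geq 3\}$. Concretely, for each non-empty $X \subseteq \hat{\IN}$ I would set
\[
  P_X \;:=\; \bigcap_{n \in X} \pPol \RLambda_n,
\]
and consider the family $I := \{ P_X \mid \emptyset \neq X \subseteq \hat{\IN}\}$. Each $\pPol \RLambda_n$ is a strong partial clone (sets of the form $\pPol \rho$ are always strong partial clones, cf.\ Lemma~\ref{lemma:cPolqIsClass} applied with $\rho = \rho'$), and a straightforward verification shows that an arbitrary intersection of strong partial clones is again a strong partial clone, so $P_X$ is a strong partial clone.

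Next I would verify that $P_X \in \intervalStr{\Lambda}$ for every such $X$. The inclusion $\Lambda \subseteq P_X \cap \OpBool$ follows because $\RLambda_m$ is constructible from the plain basis $\{\lambda_k \mid k \geq 1\}$ of $\Lambda$, so $\Str{\Lambda} \subseteq \pPol \RLambda_n$ for each $n \in X$, and in particular every total function of $\Lambda$ preserves each $\RLambda_n$. For the reverse inclusion, pick any $n \in X$ and use the corollary already established that $\Pol \RLambda_n = \Lambda$: then
\[
  P_X \cap \OpBool \;\subseteq\; \pPol \RLambda_n \cap \OpBool \;=\; \Pol \RLambda_n \;=\; \Lambda.
\]
So $I \subseteq \intervalStr{\Lambda}$.

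The crucial step is distinctness: $P_X \neq P_Y$ whenever $X \neq Y$. This is exactly the content of the Corollary to Theorem~\ref{theorem:Lambda:nonconstructible}, which states $\bigcap_{n \in X} \pPol \RLambda_n = \bigcap_{m \in Y} \pPol \RLambda_m \iff X = Y$ for non-empty $X,Y \subseteq \hat{\IN}$. Consequently $X \mapsto P_X$ is injective on non-empty subsets of $\hat{\IN}$, and since the set of such subsets has cardinality $2^{\aleph_0}$, so does $I$, whence $|\intervalStr{\Lambda}| \geq 2^{\aleph_0}$. The matching upper bound is trivial: $\intervalStr{\Lambda} \subseteq 2^{\PA[\2]}$ has cardinality at most $2^{\aleph_0}$.

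The genuinely hard part of this whole development has already been absorbed into Theorem~\ref{theorem:Lambda:nonconstructible} (the independence of the monsters $\RLambda_n$), so the final theorem itself reduces to packaging: assembling the family, checking strongness and the total part, and invoking the independence corollary. No further combinatorial obstacle remains.
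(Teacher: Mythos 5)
Your proposal is correct and follows essentially the same route as the paper: the paper also takes $I = \{\bigcap_{n \in X} \pPol \RLambda_n \mid \emptyset \neq X \subseteq \hat{\IN}\}$, notes $I \subseteq \intervalStr{\Lambda}$ via the corollary $\Pol \RLambda_m = \Lambda$, and derives distinctness from the corollary to Theorem~\ref{theorem:Lambda:nonconstructible}. You merely spell out the routine verifications (strongness of intersections, the total-part computation, the trivial upper bound) that the paper leaves implicit.
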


\begin{theorem} \label{theorem:LambdaT1continuum}
  The interval $\intervalStr{\Lambda \cap T_1}$ has the cardinality of the continuum.
\end{theorem}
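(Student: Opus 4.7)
The plan is to derive this statement as an immediate corollary of Theorem~\ref{theorem:Lambdacontinuum} via Lemma~\ref{lemma:diffconstants}. The setup is tailor-made for it: we take $C := \Lambda$ and $D := \Lambda \cap T_1$, and the distinguishing constant is $a := 0$.

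First I would verify the two hypotheses of Lemma~\ref{lemma:diffconstants}. The containment $D \subseteq C$ is immediate from the definition. For $c_0 \in C \setminus D$, note that $c_0 \in \Lambda$ by the generating set $\{\land, c_0, c_1\}$ of $\Lambda$, whereas $c_0 \notin T_1$ because $c_0$ does not preserve the relation $\{1\}$ (indeed $c_0(1) = 0 \notin \{1\}$). Hence $c_0 \in \Lambda \setminus (\Lambda \cap T_1)$, and all hypotheses of Lemma~\ref{lemma:diffconstants} are satisfied.

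Applying that lemma then yields
\[
  |\intervalStr{\Lambda \cap T_1}| \;\geq\; |\intervalStr{\Lambda}|,
\]
and the right-hand side has the cardinality of the continuum by Theorem~\ref{theorem:Lambdacontinuum}. Since $\PA[]$ is countable for $A = \2$, the set $\intervalStr{\Lambda \cap T_1}$ cannot exceed continuum cardinality, so equality holds. There is no genuine obstacle here; the whole content of the argument is recognizing that Lemma~\ref{lemma:diffconstants} was designed precisely to transport continuum-lower-bounds from a clone down to a subclone that omits a constant, and that the pair $(\Lambda, \Lambda \cap T_1)$ fits this template exactly.
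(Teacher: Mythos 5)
Your proof is correct and is essentially the paper's own argument: the paper likewise applies Lemma~\ref{lemma:diffconstants} with $c_0 \in \Lambda \setminus (\Lambda \cap T_1)$ and then invokes Theorem~\ref{theorem:Lambdacontinuum}. You simply spell out the verification of the hypotheses and the trivial continuum upper bound in more detail than the paper does.
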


\begin{proof}
  We have $c_0 \in \Lambda \setminus (\Lambda \cap T_1)$. Thus
  Lemma \ref{lemma:diffconstants} is applicable, and by \ref{theorem:Lambdacontinuum}
  follows that $\intervalStr{\Lambda \cap T_1}$ has the cardinality of the continuum.
\end{proof}

%
%
%
%
%
%
%
%
%
%

\section{Conclusion}

Combining Theorems~\ref{theorem:subT02}, \ref{theorem:Lambdacontinuum}, \ref{theorem:LambdaT1continuum}, 
\ref{theorem:subL}, 
and \ref{theorem:finiteIntervals} we obtain the Dichotomy result for intervals of strong partial clones.

\begin{theorem}
  Let $C$ be a total Boolean clone.

  Then $\intervalStr{C}$ is either finite or has the cardinality of the continuum. Furthermore,
  $\intervalStr{C}$ is finite if and only if $M \cap T_0 \cap T_1 \subseteq C$ or $S \cap T_0 \cap T_1 \subseteq C$.
\end{theorem}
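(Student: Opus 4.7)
The plan is to assemble this dichotomy from the earlier theorems via a two-direction case analysis, using Post's lattice as the organizing tool. The bulk of the work has already been done in Sections~\ref{section:L}, \ref{section:T02}, and \ref{section:lambda}; this final theorem is essentially a bookkeeping step that collects those results and matches them against the lattice.

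For the finite direction, I would simply invoke Theorem~\ref{theorem:finiteIntervals}, which lists explicit cardinalities for $\intervalStr{C}$ whenever $M \cap T_0 \cap T_1 \subseteq C$ or $S \cap T_0 \cap T_1 \subseteq C$. The inclusion $\intervalStr{C} \subseteq \intervalD{C}$ is not strictly needed here, since the third column of the table in that theorem directly gives $|\intervalStr{C}|$.

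For the continuum direction, I would first appeal to Post's lattice (Figure~\ref{figure:PostsLattice}): every total Boolean clone $C$ with $C \not\supseteq M \cap T_0 \cap T_1$ and $C \not\supseteq S \cap T_0 \cap T_1$ is contained in at least one of the five clones $L$, $\Lambda$, $V$, $T_{0,2}$, $T_{1,2}$. I would then dispatch each possibility to a prepared result: subclones of $L$ to Theorem~\ref{theorem:subL}, subclones of $T_{0,2}$ to Theorem~\ref{theorem:subT02}, and $\Lambda$ together with $\Lambda \cap T_1$ to Theorems~\ref{theorem:Lambdacontinuum} and \ref{theorem:LambdaT1continuum}. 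The cases of $V$, $V \cap T_0$, and subclones of $T_{1,2}$ are handled by the $0 \leftrightarrow 1$ symmetry of Post's lattice, which transports the arguments for $\Lambda$, $\Lambda \cap T_1$, and $T_{0,2}$ verbatim; indeed the remark at the start of Section~\ref{section:lambda} explicitly invokes this symmetry.

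The only delicate point, and the one I would take care to verify, is that the enumeration above really exhausts all clones $C$ failing the finite condition. This is a purely lattice-theoretic check on Figure~\ref{figure:PostsLattice}: one must confirm that any clone not lying in the upper region bounded above by $M \cap T_0 \cap T_1$ and $S \cap T_0 \cap T_1$ sits below at least one of the five listed clones. Given that, the proof collapses to a short list of citations and no new construction is required. The main obstacle, therefore, is not in this final theorem at all, but was the already-dispatched task of producing continuum-size families in Sections~\ref{section:L}--\ref{section:lambda} (notably the monster relations $\RLambda_m$ and the circular/complete relations $\RTzerotwo_n$, together with the reduction machinery of Lemma~\ref{lemma:strongdownwards}).
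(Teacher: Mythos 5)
Your proposal is correct and follows essentially the same route as the paper, whose proof of this theorem is literally the one-line assembly of Theorems~\ref{theorem:finiteIntervals}, \ref{theorem:subL}, \ref{theorem:subT02}, \ref{theorem:Lambdacontinuum}, and \ref{theorem:LambdaT1continuum}, with the $0\leftrightarrow 1$ symmetry covering $V$, $V\cap T_0$, and the subclones of $T_{1,2}$. Your added care about verifying the lattice-theoretic exhaustiveness of the five maximal cases is exactly the (implicit) content the paper delegates to Post's lattice.
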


\subsection{Open questions}

Does the dichotomy between finite intervals and intervals of continuum cardinality also hold if we consider
the clones on some set $A$ with $|A| \geq 3$? Or, do there exists some $A$ and some
total clone $C$ in $\OA$ such that the interval $\intervalStr{C}$ is countably infinite?
Another question in this direction is concerning the two different intervals $\intervalD{C}$ and $\intervalStr{C}$
for some total clone $C$ in $\OA$. Clearly, $|\intervalStr{C}| \leq |\intervalD{C}|$ holds. In the Boolean
case for each total clone $C$ either both intervals are finite, or 
both intervals have the cardinality of the continuum. 
But is this also the case on every $A$ with $|A| \geq 3$?

For some subclones of $L$, and (in principle) all subclones of $\Lambda$ and $V$, respectively, we have shown
a strong relation between the intervals. Let $C \in \{ L, \Lambda, V \}$ and $D$ a total Boolean clone
with $D \subseteq C$ and $D \notin \{ C_{01}, \Omega_1 \}$. Then there is some partial class $T$, such that
\[ (X \cap T) \cup \Str{D} \subseteq (Y \cap T) \cup \Str{D} \iff X \subseteq Y \]
and
\[ (X \cap T) \cup \Str{D} \in \intervalStr{D} \]
hold for all $X,Y \in \intervalStr{C}$. This means that there is some order-preserving embedding of the interval 
$\intervalStr{C}$ into $\intervalStr{D}$. The author would be interested, if such an embedding is possible
for all pairs Boolean clones $C$ and $D$ with $D \subseteq C$? Since in this paper the structure of the lattice
was used explicitely, for example for the subclones of $L$, a more difficult question arises: 
If the embedding is possible, can this be proven in general without
directly using the description of all clones? What about this statement for some $A$ with $|A| \geq 3$?

The partial classes introduced in Section~\ref{section:partialClasses} are an equivalent of the classes 
considered by Harnau  in 
\cite{Harnau:19857:Relationenpaare:I,Harnau:19857:Relationenpaare:II,Harnau:19857:Relationenpaare:III}.
In there he presents the Galois connection and also describes the closure operator for the relation pairs.
The difference on the relational side between clones and strong partial clones is the omission of
the projection operator. Does this also work when switching from classes to strong partial classes?

\providecommand{\noopsort}[1]{} \providecommand{\foreignlanguage}[2]{#2}

\pagebreak[3]
\appendix
\section{Finite intervals of strong clones} \label{section:finiteIntervalsDrawings}

In the Figures~\ref{figure:MT} and \ref{figure:ST} we present the two finite intervals
$\intervalStrUp{M \cap T_0 \cap T_1}$ and $\intervalStrUp{S \cap T_0 \cap T_1}$, respectively.
These were given in \cite{HaddadSimons:2003}, but the drawings have been improved to show the structure in
a better way. The following 
short-hand notation is used for some of these partial Boolean clones.
All unlabeled points can be written as the intersection of some of these.
\begin{align*}
  P_a    & := \pPol \{a\} \text{ for } a \in \{0,1\} \\
  P_{01} & := \pPol \{(0,1)\} \\
  P_{\leq} & := \pPol \{(0,0),(0,1),(1,1)\} \\
  P_{a\leq} & := \pPol \{(a,0,0),(a,0,1),(a,1,1)\}  \text{ for } a \in \{0,1\} \\
  P_{01\leq} & := \pPol \{(0,1,0,0),(0,1,0,1),(0,1,1,1)\} \\
  P_{\lambda} & := \pPol \{(0,1),(1,0)\} \\
  P_{a\lambda} & := \pPol \{(a,0,1),(a,1,0)\}  \text{ for } a \in \{0,1\} \\
  P_{01\lambda} & := \pPol \{(0,1,0,1),(0,1,1,0)\}
\end{align*}

\begin{figure}[p]
  \intervalMT{0.9}
\caption{The interval $\intervalStrUp{M \cap T_0 \cap T_1}$}
\label{figure:MT}
\end{figure}

\begin{figure}[p]
  \intervalST{0.9}
\caption{The interval $\intervalStrUp{S \cap T_0 \cap T_1}$}
\label{figure:ST}
\end{figure}

\end{document}